\pgfplotsset{compat=1.18}
\newtheorem{theorem}{Theorem}[section]
\newaliascnt{lemma}{theorem}
\newtheorem{lemma}[lemma]{Lemma}
\newaliascnt{corollary}{theorem}
\newtheorem{corollary}[corollary]{Corollary}
\newaliascnt{proposition}{theorem}
\newtheorem{proposition}[proposition]{Proposition}
\theoremstyle{definition}
\newaliascnt{remark}{theorem}
\newtheorem{remark}[remark]{Remark}
\newtheorem*{remark*}{Remark}
\newcommand{\C}{\ensuremath{\mathbb{C}}}
\newcommand{\D}{\ensuremath{\mathbb{D}}}
\renewcommand{\H}{\ensuremath{\mathbb{H}}}
\newcommand{\II}{\ensuremath{\mathbb{I}}}
\newcommand{\N}{\ensuremath{\mathbb{N}}}
\newcommand{\NN}{\ensuremath{\mathbb{N}_0}}
\newcommand{\R}{\ensuremath{\mathbb{R}}}
\renewcommand{\S}{\ensuremath{\mathbb{S}}}
\newcommand{\T}{\ensuremath{\mathbb{T}}}
\newcommand{\X}{\ensuremath{\mathbb{X}}}
\newcommand{\Y}{\ensuremath{\mathbb{Y}}}
\newcommand{\Z}{\ensuremath{\mathbb{Z}}}
\newcommand{\M}{\ensuremath{\mathcal{M}}}
\newcommand{\U}{\ensuremath{\mathcal{U}}}
\newcommand{\V}{\mathcal{V}}
\newcommand{\cW}{\mathcal{W}}
\newcommand{\cA}{\mathcal{A}}
\newcommand{\cB}{\mathcal{B}}
\newcommand{\cE}{\mathcal{E}}
\newcommand{\cR}{\mathcal{R}}
\newcommand{\cS}{\mathcal{S}}
\newcommand{\cT}{\mathcal{T}}
\newcommand{\cZ}{\mathcal{Z}}
\newcommand{\rR}{\mathrm{R}}
\renewcommand{\P}{\ensuremath{\mathcal{P}}}
\newcommand{\ac}{\ensuremath{\mathrm{ac}}}
\newcommand{\Pac}{\ensuremath{\mathcal{P}_{\ac}}}
\newcommand{\Pe}{\ensuremath{\mathcal{P}_{\mathrm{even}}}}
\newcommand{\SO}{\ensuremath{\mathrm{SO}(3)}}
\newcommand{\SOd}{\ensuremath{\mathrm{SO}(d)}}
\newcommand{\abs}[1]{\ensuremath{\left\vert#1\right\vert}}
\newcommand{\inn}[1]{\ensuremath{\left\langle#1\right\rangle}}
\newcommand{\dx}{\mathrm{d}}
\newcommand{\ds}{\,\mathrm{ds}}
\newcommand{\e}{\mathrm{e}}
\renewcommand{\i}{\mathrm{i}}
\newcommand{\zb}[1]{\ensuremath{\boldsymbol{#1}}}
\DeclareMathOperator{\azi}{azi}
\DeclareMathOperator{\zen}{zen}
\DeclareMathOperator*{\argmin}{arg\,min}
\DeclareMathOperator{\proj}{proj}
\DeclareMathOperator{\Id}{Id}
\DeclareMathOperator{\bary}{Bary}
\DeclareMathOperator{\sph}{\Phi}
\DeclareMathOperator{\eul}{\Psi}
\DeclareMathOperator{\trace}{trace}
\DeclareMathOperator{\SW}{SW}
\DeclareMathOperator{\PSW}{PSW}
\DeclareMathOperator{\RSW}{RSW}
\DeclareMathOperator{\SSW}{SSW}
\DeclareMathOperator{\SOSW}{SOSW}
\DeclareMathOperator{\WS}{W}
\DeclareMathOperator{\W}{W}
\DeclareMathOperator{\CDT}{CDT}
\renewcommand{\d}{\, \mathrm{d}}
\newcommand{\norm}[1]{\left\lVert #1
  \right\rVert}
\newcommand{\ii}[1]{\llbracket #1\rrbracket}
\newcommand{\ba}{{\boldsymbol a}}
\newcommand{\be}{{\boldsymbol e}}
\newcommand{\bg}{{\boldsymbol g}}
\newcommand{\bn}{{\boldsymbol n}}
\newcommand{\bq}{{\boldsymbol q}}
\newcommand{\br}{{\boldsymbol r}}
\newcommand{\bu}{{\boldsymbol u}}
\newcommand{\bv}{{\boldsymbol v}}
\newcommand{\bw}{{\boldsymbol w}}
\newcommand{\bx}{{\boldsymbol x}}
\newcommand{\by}{{\boldsymbol y}}
\newcommand{\bz}{{\boldsymbol z}}
\newcommand{\bA}{{\boldsymbol A}}
\newcommand{\bB}{{\boldsymbol B}}
\newcommand{\bP}{{\boldsymbol P}}
\newcommand{\bQ}{{\boldsymbol Q}}
\newcommand{\bR}{{\boldsymbol R}}
\newcommand{\blambda}{{\boldsymbol \lambda}}
\newcommand{\bpsi}{{\boldsymbol \psi}}
\newcommand{\bxi}{\boldsymbol\xi}
\begin{document}

\def\sectionautorefname{Section}
\def\subsectionautorefname{Section}

\title{Parallelly Sliced Optimal Transport \\
on Spheres and on the Rotation Group}

\author{
Michael Quellmalz\textsuperscript{1}
\and
    Léo Buecher\textsuperscript{1,2}
	\and
	Gabriele Steidl\textsuperscript{1}
}

\maketitle
\date{\today}

\footnotetext[1]{Institute of Mathematics,
	Technische Universität Berlin,
	Stra{\ss}e des 17.\ Juni 136, 
	10623 Berlin, Germany,
	\ttfamily{\{quellmalz, steidl\}@math.tu-berlin.de}
    \url{https://tu.berlin/imageanalysis}
	} 

\footnotetext[2]{CentraleSupélec, Université Paris-Saclay, \ttfamily{leo.buecher@student-cs.fr}}

\begin{abstract}
Sliced optimal transport, which is basically a Radon transform followed by one-dimensional
optimal transport, became popular in various applications due to its efficient
computation. 
In this paper, we deal with sliced optimal transport on the sphere $\S^{d-1}$ and on the rotation group $\SO$. 
We propose a parallel slicing procedure of the sphere which requires again only
optimal transforms on the line.
We analyze the properties of the corresponding
parallelly sliced optimal transport, which provides
in particular a rotationally invariant metric on the spherical probability measures.
For $\SO$, we introduce a new two-dimensional Radon transform and 
develop its singular value decomposition. 
Based on this, we propose a sliced optimal transport on $\SO$.

As Wasserstein distances were extensively used in barycenter computations, 
we derive algorithms to compute the barycenters with respect to our new sliced Wasserstein distances
and provide synthetic numerical examples on the 2-sphere that demonstrate their behavior {for}
both the free and fixed support setting of discrete spherical measures.
In terms of computational speed, they outperform the existing methods for semicircular slicing as well as the regularized Wasserstein barycenters.
\end{abstract}

\section{Introduction}

Optimal transport (OT) deals with the problem of finding the most efficient way to transport probability measures.
The Wasserstein distance is a metric on the space of probability measures and has received much attention  \cite{PeyCut19,San15,Vil03},
e.g., for neural gradient flows \cite{FZTC2022,HWAH2023,KSB2022,AHS2023} in machine learning.
Since OT on multi-dimensional domains is hard to compute, there exist different modifications that allow an efficient computation,
such as the entropic regularization that yields the Sinkhorn algorithm \cite{Cut13,PeyCut19,Kni08,BaQue22}. 
Sliced OT on Euclidean spaces utilizes the Radon transform to reduce the problem to the real line \cite{NRNRNH23,RabPeyDelBer12,San15,BonRabPeyPfi15},
where OT possesses an analytic solution that can be computed efficiently.
The notion of sliced OT can be generalized to other Radon-like transforms \cite{KolNadSimBadRoh19}.
{Specific geometries have been considered such as spheres \cite{Bon23,QueBeiSte23}, manifolds with constant negative curvature \cite{Bon23a}, or separable Hilbert spaces \cite{Han23}.}

In this paper, we are interested in sliced OT on special manifolds.
A slicing approach on Riemannian manifolds based on eigenfunctions of the Laplacian was proposed in \cite{RusMaj23}.
OT on the sphere has been intensely studied, e.g., the computation of Wasserstein barycenters \cite{StaClaSolJeg17,TheKer22},
the regularity of optimal maps \cite{Loe10},
isometric rigidity of Wasserstein spaces \cite{GehHruTitVir23}
a connection with a Monge--Ampère type equation \cite{HamTur22,McrCotBud18,WelBroBudCul16},
or a variational framework \cite{CuiQi19}.
Sliced OT was generalized to spheres in two different ways:
Bonet et al. \cite{Bon22} introduced a slicing along semicircles to reduce the OT problem to 
one-dimensional circle, see \autoref{fig:slicing} right. This requires only OT computations on circles 
which was examined in \cite{DelSalSob10}.
The respective sliced Wasserstein distance is a metric in the space of probability measures on the 2-sphere \cite{QueBeiSte23}. 
Note that Radon transforms on such semicircles have been considered before in \cite{HiPoQu18,Gro98},
providing an extension of the Funk--Radon transform \cite{Fun13,Hel11,LoRiSpSp11,HiQu15,QueWeiHubErc23}.
A second approach \cite{QueBeiSte23} of sliced spherical Wasserstein distances is based on the vertical slice transform \cite{ZaSc10,HiQu15circav,Rub19}.
This yields a family of measures on the unit interval instead of the circle 
and is therefore faster to compute than the first approach.
However, this vertical slicing approach provides only a metric for even measures on the 2-sphere, i.e. the same values are taken
on the upper and lower hemisphere. 
This is a serious restriction for practical applications.

In this paper, we generalize the vertically sliced OT 
from even measures to arbitrary probability measures 
by constructing a so-called parallelly sliced OT, see \autoref{fig:slicing} for an illustration.
We provide a method for spheres $\S^{d-1}$ in general dimensions $d$.
The key advantages are that the respective sliced Wasserstein distance is a rotationally invariant metric on the spherical probability measures,
and that it is faster to compute than the semicircular sliced Wasserstein distance since we project on intervals instead of circles.
Our numerical tests indicate a speedup between 40 and 100 times.
In \autoref{thm:SSW}, we prove estimates between the spherical Wasserstein distance and its parallelly sliced version.
\begin{figure}[ht]
    \centering
    \includegraphics[height=4cm]{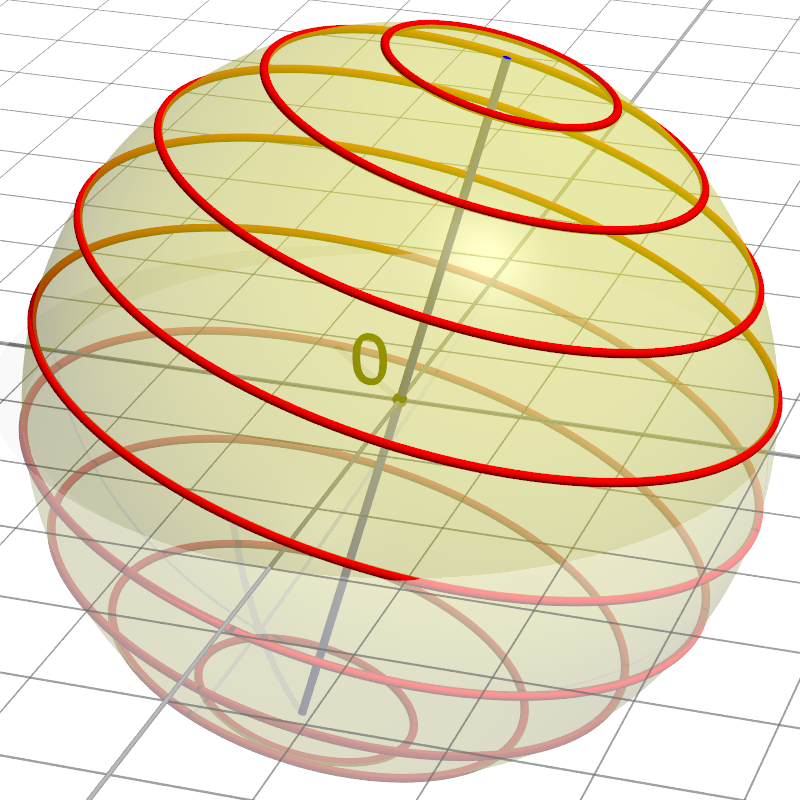}\qquad
    \includegraphics[height=4cm]{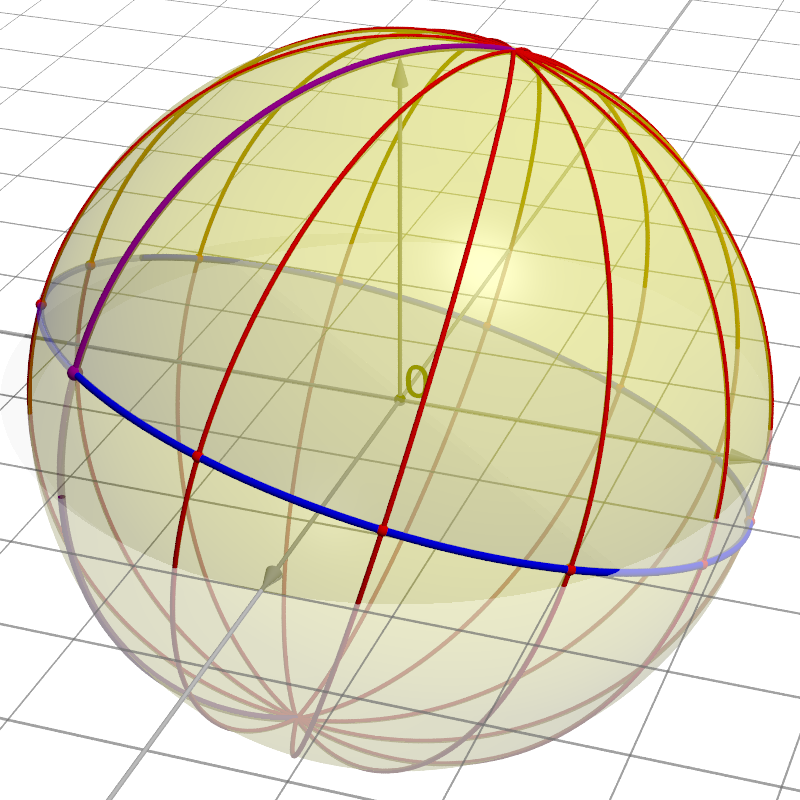}
    \caption{{Our proposed} parallel slicing (left): each red circle is projected to a single point on the blue line segment.
    {The vertical slicing \cite{QueBeiSte23} can be regarded as special case that keeps the blue line direction fixed.}
    Semicircular {slicing from \cite{Bon23}} (right): each red semicircle is projected to one point on the blue circle.}
    \label{fig:slicing}
\end{figure}

Furthermore, we consider OT in the group $\SO$ of three-dimensional rotation matrices,
which has applications {in the synchronization of probability measures on rotations} \cite{BiArSiGu20}.
A Radon transform along one-dimensional geodesics of $\SO$ was proposed in \cite{BoHiPrSc97,HiPoJuScSc07},
but, for the purpose of OT,
we require slicing along two-dimensional submanifolds of $\SO$.
Therefore, we  develop a new two-dimensional Radon transform on $\SO$,
including its singular value decomposition and adjoint operator. 
This paves the way to prove that the corresponding sliced Wasserstein distance fulfills the metric properties on the set of probability measures on $\SO$.

Barycenter computations with respect to Wasserstein distances and their sliced variants
are of increasing interest \cite{PeyCut19,BonRabPeyPfi15,RabPeyDelBer12}.
Therefore, we deal with barycenters with respect to our new sliced Wasserstein distances
and describe their computation 
both in the free and fixed support discrete setting,
as well as so-called Radon Wasserstein barycenter.
As proof of the concept, we give numerical examples of barycenter computations
on the 2-sphere.
We compare our approach with the semicircular slicing \cite{Bon23} as well as the entropy-regularized Wasserstein barycenter computed with PythonOT \cite{POT}. 

\paragraph*{Outline of the paper.}
We provide the basic preliminaries on OT and the manifolds
$\mathbb S^{d-1}$ and $\SO$ in Section \ref{sec:prelim}.
The parallel slice transform for functions and measures, and the corresponding parallelly sliced Wasserstein $p$-distances are introduced in Section \ref{sec:slice_sphere}.
In Section \ref{sec:SO}, we generate sliced Wasserstein distances on $\SO$
based on our new two-dimensional Radon transform on $\SO$.
Barycenter computations are examined in two different ways, namely
sliced Wasserstein barycenters and Radon Wasserstein barycenters
in Section \ref{sec:bary}. 
In the former case, we deal both with free and fixed discretization.
In Section \ref{sec:numerics}, we demonstrate by synthetic numerical examples the
performance of our barycenter algorithms on the 2-sphere. 
In particular, we compare our parallel slicing approach with the slicing method of
Bonet et al. \cite{Bon22}. Here some theoretically expected phenomena are illustrated.
Finally, conclusions are drawn in Section \ref{sec:conc}.
The appendix contains technical proofs.

\section{Preliminaries} \label{sec:prelim}
In this section, we provide the notation and necessary preliminaries 
on OT, in particular on the interval,
and harmonic analysis on the unit sphere on $\mathbb R^d$. 

\subsection{Measures and OT}
Let $\X$ be a compact {Riemannian manifold} 
with metric $d\colon \X \times \X \to \R$,
and let $\cB(\X)$ be the Borel $\sigma$-algebra induced by $d$.
We denote by $\M(\X)$ the Banach space of signed, finite measures, and 
by $\P(\X)$ the subset of probability measures
on $\X$. 
The pre-dual space of $\M(\X)$ is the space of continuous functions $C(\X)$.
Let $\Y$ be another compact manifold and $T\colon \X \to \Y$
be measurable. For $\mu \in \M(\X)$, 
we define the \emph{push-forward measure} 
$T_\# \mu \coloneqq \mu \circ T^{-1} \in \M(\mathbb Y)$.

The $p$-\emph{Wasserstein distance}, $p\in[1,\infty)$, of $\mu,\nu \in \P(\mathbb X)$ is given by
\begin{equation} \label{eq:Wp}
    \WS_p^p(\mu,\nu)
    \coloneqq
    \min_{\pi\in\Pi(\mu,\nu)} \int_{\mathbb X^2} d^p(x,y) \d \pi (x,y),
\end{equation}
with
$
\Pi(\mu,\nu) \coloneqq \{\pi \in\M(\mathbb X \times \mathbb X): 
\pi(B \times \mathbb  X) = \mu(B),
\pi(\mathbb  X\times B) = \nu(B)
\ \text{for all }  B \in \cB(\mathbb X)\}
$. 
It defines a metric on $\P(\mathbb X)$.
The metric space $\P^p(\mathbb X) \coloneqq (\P(\mathbb X),W_p)$
is called \emph{$p$-Wasserstein space} and, in case $p=2$, just Wasserstein space.
The $p$-Wasserstein distance is a special case of the more general \emph{optimal transport} (OT) problem,
where $d^p(x,y)$ can be replaced by a more general cost function $c(x,y)$.
For 
$\blambda \in \Delta_M \coloneqq \{\blambda \in [0,1]^M\mid \sum_{i=1}^{M} \lambda_i=1\},$
the \emph{Wasserstein barycenter} of $\mu_i\in\P^2(\X)$, $i\in\ii{M} \coloneqq \{1,\dots,M\}$, is the minimizer  
\begin{equation} \label{eq:W-bary}
    \bary^{\WS}_X(\mu_i, \lambda_i)_{i=1}^{M}
    \coloneqq
    \argmin_{\nu\in\P(\X)}
    \sum_{i=1}^{M}
    \lambda_i\, \WS_2^2(\nu,\mu_i),
\end{equation}
see \cite{AguCar11}.
The Wasserstein barycenter of absolutely continuous measures is unique \cite{KimPas17}.

\paragraph*{OT on the Interval}

If $\X$ is the \emph{unit interval} $\II \coloneqq [-1,1]$ with the distance $d(x,y) = \abs{x-y}$, the OT between two probability measures $\mu,\nu\in \P(\II)$ can be computed easily \cite{PeyCut19,San15,Vil03} 
using the \emph{cumulative distribution function} 
$F_\mu(x) \coloneqq \mu([-1,x])$, $x\in\II$,
which is non-decreasing and right continuous.
Its pseudoinverse, 
the \emph{quantile function}
$
F_\mu^{-1}(r) \coloneqq \min\{ x\in\II\mid F_\mu(x)\ge r\}$,
$r\in[0,1]$,
is non-decreasing and left continuous.
The $p$-Wasserstein distance \eqref{eq:Wp} between $\mu,\nu \in \P^p(\II)$
now equals 
$\WS_p(\mu,\nu)
=
\lVert F_\mu^{-1} - F_\nu^{-1} \rVert_{L^p([0,1])}$.
If $\mu \in \Pac(\II)$, where $\Pac(\II)$ denotes the probability measures that are
\emph{absolutely continuous} with respect to the Lebesgue measure,
then the OT plan $\pi$ in \eqref{eq:Wp}
is uniquely given by 
\begin{equation}  
    \pi = (\Id, T^{\mu,\nu})_\# \mu
    \quad\text{with}\quad 
    T^{\mu,\nu}(x) \coloneqq F_\nu^{-1}(F_\mu(x)),
    \quad
    x \in \II.
\end{equation}

Based on the \emph{OT map} $T^{\mu,\nu}$,
the Wasserstein space $\P^p(\II)$ can be isometrically embedded 
into $L^p_\omega(\II)$ with $\omega \in \Pac(\II)$
\cite{KolParRoh16, PaKoSo18, BeBeSt22},
where $L^p_\omega(\II)$ consists of all $p$-integrable functions with respect to $\omega$.
For a reference measure $\omega \in \Pac(\II)$, the \emph{cumulative distribution transform} (CDT)
is defined by 
$\CDT_\omega \colon \P^p(\II) \to L^p_\omega(\II)$ with
\begin{equation} \label{eq:cdt}
    \CDT_{\omega}[\mu] (x) 
    \coloneqq 
    (T^{\omega,\mu} - \Id)(x)
    =
    \bigl(F_{\mu}^{-1} \circ F_{\omega} \bigr) (x) - x,
    \quad
    x\in\II.
\end{equation}
The CDT is in fact a mapping from $\P^p(\II)$ into the tangent space of $\P^p(\II)$ at $\omega$,
see \cite[§~8.5]{AGS05}.
Due to the relation to the OT map,
the CDT can be inverted by $\mu = \CDT^{-1}_\omega[h] \coloneqq (h + \Id)_\# \omega$ 
{for $h = \CDT_\omega[\mu]$}. 
If $\mu,\omega \in \Pac(\II)$ possess positive density functions $f_\mu$ and $f_\omega$,
then, by the transformation formula for push-forward measures, $f_\mu$ can be recovered by
\begin{equation} \label{eq:icdt}
    f_\mu(x)
    =
    \left( g^{-1} \right)'(x)\, f_\omega(g^{-1}(x))
    \quad
    \text{with} \quad
    g(x) = \CDT_\omega[\mu](x) + x
    ,\quad
    x \in \II.
\end{equation}
For $\mu_i \in \P(\II)$
and an arbitrary reference $\omega \in \Pac(\II)$,
the Wasserstein barycenter \eqref{eq:W-bary} has the form \cite{KolParRoh16}
\begin{equation}
    \label{eq:cdt-bary}
    \bary_\II (\mu_i,\lambda_i)_{i=1}^{M}
    =
    \CDT^{-1}_{\omega}\left(
    \sum_{i=1}^{M} \lambda_i \CDT_\omega [\mu_i] \right).
\end{equation}

\paragraph*{Sliced OT}
Given another compact space $\D$ with a probability measure $u_\D$ and a slicing operator $\cS_{\psi} \colon \X\to\R$ for all $\psi\in\D$,
we define the \emph{sliced $p$-Wasserstein distance}
\begin{equation} \label{eq:SW-general}
    \SW_p^p(\mu,\nu)
    \coloneqq
    \int_\D \WS_p^p((\cS_{\psi})_\#\mu, (\cS_{\psi})_\#\nu) \d u_{\D}(\psi).
\end{equation}
Sliced {Wasserstein} distances on the Euclidean space $\X=\R^d$ with the slicing operator $\cS^{\R^d}_\bpsi \coloneqq \inn{\bpsi,\cdot}$ for $\bpsi\in\D = \S^{d-1}$ are well known \cite{RabPeyDelBer12,BonRabPeyPfi15,San15}.
Sliced OT is closely related with the Radon transform
\begin{equation} \label{eq:Radon}
\cR_\bpsi\colon \P(\R^d)\to \P(\R),\quad 
\mu\mapsto
(\cS^{\R^d}_\bpsi)_\# \mu.
\end{equation}
The Radon transform is often defined for functions on $\R^d$ via an integral, see \cite{NaWue00}.

\subsection{Sphere}
Let $d\in\N$ with $d\ge3$.
We define the $(d-1)$-dimensional unit sphere in $\R^d$ by
$$\S^{d-1} \coloneqq \{\bx \in\R^d\mid \norm{\bx}=1\},$$
and denote the canonical unit vectors by $\zb e^j\in\R^d$ for
$j \in \ii{d} \coloneqq \{1,\dots,d\}$.
The geodesic distance on the sphere $\S^{d-1}$ reads as
\begin{equation} \label{eq:Sd-dist}
d(\zb\xi,\zb\eta) \coloneqq \arccos(\inn{\zb\xi,\zb\eta})
,\qquad \forall \bxi,\zb\eta\in\S^{d-1},
\end{equation}
and we denote the volume of $\S^{d-1}$ by
\begin{equation} \label{eq:Sd-vol}
    \abs{\S^{d-1}}
    \coloneqq
    \int_{\S^{d-1}} \d \sigma_{\S^{d-1}}
    =
    \frac{2\pi^{d/2}}{\Gamma(d/2)},
\end{equation}
where $\sigma_{\S^{d-1}}$ is the surface measure on $\S^{d-1}$.
Normalizing $\sigma_{\S^{d-1}}$
yields the \emph{uniform measure} $u_{\S^{d-1}} \coloneqq \abs{\smash{\S^{d-1}}}^{-1} \sigma_{\S^{d-1}}$.
We can write any vector $\bxi\in\S^{d-1}$ as
\begin{equation} \label{eq:xi-decomp}
\bxi =  \begin{pmatrix}\sqrt{1-t^2}\,\zb\eta\\ t \end{pmatrix} 
\qquad \text{for}\quad \zb\eta\in\S^{d-2},\ t\in\II,
\end{equation}
then the surface measure on the sphere $\S^{d-1}$ decomposes as \cite[(1.16)]{AtHa12}
\begin{equation} \label{eq:Sd_measure}
\dx \sigma_{\S^{d-1}}(\bxi)
=
\dx \sigma_{\S^{d-2}}(\zb\eta)
\,(1-t^2)^{\frac{d-3}{2}} \d t.
\end{equation}
For $\S^2$, we denote the bijective spherical coordinate transform by
\begin{equation} \label{eq:sph}
\sph\colon[0,2\pi)\times(0,\pi)\cup\{0\}\times\{0,\pi\}\to\S^2,\quad
(\varphi,\theta)\mapsto (\cos\varphi\sin\theta,\sin\varphi\sin\theta,\cos\theta)^\top.
\end{equation}

\paragraph*{Spherical harmonics}
Let $n\in\NN$. We denote by $\Y_{n,d}$ the space of all polynomials $f\colon\R^d\to\C$ which are harmonic, i.e., the Laplacian $\Delta f$ vanishes everywhere, and homogeneous of degree $n$, i.e., $f(\alpha \bx) = \alpha^n f(\bx)$ for all $\alpha\in\R$ and $\bx\in\R^d$,  restricted to the sphere $\S^{d-1}$.
Setting
\begin{equation} \label{eq:Nnd}
N_{n,d} 
\coloneqq
\dim(\Y_{n,d})
=
\frac{(2n+d-2)\, (n+d-3)!}{n!\, (d-2)!},
\end{equation}
we call an orthonormal basis
$
\{Y_{n,d}^k
\mid k \in \ii{N_{n,d}}\}
$
of $\Y_{n,d}$ a basis of \emph{spherical harmonics} on $\S^{d-1}$ of degree $n$, cf.\ \cite{AtHa12}.
Then $\{ Y_{n,d}^k \mid n\in\NN,\, k\in\ii{N_{n,d}} \}$ forms an orthonormal basis of $L^2(\S^{d-1})$.
In particular,
we can write any $f\in L^2(\S^{d-1})$ as spherical Fourier series
\begin{equation} \label{eq:Y-series}
  f 
  =
  \sum_{n=0}^\infty \sum_{k=1}^{N_{n,d}}
  \hat f_{n,d}^k\, Y_{n,d}^k
  , \quad \text{where} \quad
  \hat f_{n,d}^k
  \coloneqq
  \inn{f,Y_{n,d}^k}_{L^2(\S^{d-1})}.
\end{equation}

The \emph{Legendre polynomial} $P_{n,d}$ of degree $n\in\NN$ in dimension~$d\ge2$ is given by \cite[(2.70)]{AtHa12}
\begin{equation}
P_{n,d}(t)
\coloneqq (-1)^n\, \frac{(d-3)!!}{(2n+d-3)!!}\, (1-t^2)^{\frac{3-d}{2}} \left(\frac{\dx}{\dx t}\right)^n (1-t^2)^{n+\frac{d-3}{2}}
,\qquad t\in[-1,1].
\label{eq:Pnd-Rodrigues}
\end{equation}
Up to normalization, the Legendre polynomials are equal to the Gegenbauer or ultraspherical polynomials, see \cite[(2.145)]{AtHa12}.
The normalized Legendre polynomials
\begin{equation}
\widetilde P_{n,d}(t)
\coloneqq \sqrt\frac{N_{n,d}\abs{\S^{d-2}}}{\abs{\S^{d-1}}}\, P_{n,d}(t)
= \frac{\sqrt{(2n+d-2)\,(n+d-3)!}}{2^{(d-2)/2}\,\sqrt{n!}\,\Gamma(\frac{d-1}{2})}\, P_{n,d}(t)
\label{eq:Pnd-normalized}
\end{equation}
satisfy the orthonormality relation
$
\int_{-1}^1 \widetilde P_{n, d}(t)\, \overline{\widetilde P_{m, d}(t)}\, (1-t^2)^{\frac{d-3}{2}} \d t
= \delta_{n, m},
$
where $\delta$ denotes the Kronecker symbol.

\subsection{Rotation Group}

We define the \emph{rotation group} 
$$\SOd \coloneqq \{\bQ\in\R^{d\times d}\mid \bQ^\top \bQ=I, \det(\bQ)=1\}.$$
We are especially interested in the 3D rotation group $\SO$.
Every rotation matrix can be written as the rotation around an axis $\bn \in \S^2$ with an angle $\omega \in \T \coloneqq \R/(2\pi\Z)$ , i.e.,
\begin{equation} \label{eq:axan}
  \rR_{\bn}(\omega) \coloneqq 
  (1-\cos\omega)\, \bn \bn^\top + 
  \begin{pmatrix}
    \cos \omega & - n_3 \sin \omega & 
    n_2 \sin \omega\\
    n_3 \sin \omega & \cos \omega & 
    n_1 \sin \omega\\
    - n_2\sin \omega & n_1 \sin \omega & 
    \cos \omega
  \end{pmatrix}
  \in\SO.
\end{equation}
Furthermore, we consider the \emph{Euler angle} parameterization
\begin{equation} \label{eq:Q}
  \eul\colon \T\times [0,\pi]\times \T \to \SO,\quad
  \eul(\alpha,\beta,\gamma)
  \coloneqq
  \rR_{\be^3}(\alpha) \rR_{\be^2}(\beta) \rR_{\be^3}(\gamma).
\end{equation}
The rotationally invariant Lebesgue measure $\sigma_{\SO}$ on $\SO$ is given by
\begin{align}
  \int_{\SO} f(\bQ) \d\sigma_{\SO}(\bQ)
  \label{eq:axan-int}
  &=
  2
  \int_{0}^{\pi}
  \int_{\S^2}
  f(\rR_{\bxi}(\omega))
  (1-\cos(\omega))
  \d\sigma_{\S^2}(\bxi)
  \d\omega,
\end{align}
see \cite[p.\ 8]{hiediss07},
and the uniform measure on $\SO$ is $u_{\SO} \coloneqq (8\pi^{2})^{-1} \sigma_{\SO}$.

The \emph{rotational harmonics} or \emph{Wigner D-functions} 
$D_n^{k,j}$ of degree $n\in\NN$ and orders $k,j\in\{-n,\dots,n\}$ are
defined by 
\begin{equation} \label{eq:D}
  D_n^{k,j} (\eul(\alpha,\beta,\gamma))
  \coloneqq \e^{-\mathrm{i}k\alpha}\, d_n^{k,j} (\cos\beta)\, \e^{-\mathrm{i}j\gamma},
\end{equation}
where the \emph{Wigner d-functions} are given for $t\in[-1,1]$ by 
\begin{align*}
  d_n^{k,j}(t) 
  &\coloneqq
  \frac{(-1)^{n-j}}{2^n} 
  \sqrt\frac{(n+k)!(1-t)^{j-k}}{(n-j)!(n+j)!(n-k)!(1+t)^{j+k}}
  \frac{\dx^{n-k}}{\dx t^{n-k}} \frac{(1+t)^{n+j}}{(1-t)^{-n+j}},
\end{align*}
see \cite[chap.~4]{Varsha88}.
The rotational harmonics satisfy the orthogonality relations
\begin{equation} \label{eq:D_ortho}
  \int_{\SO} D_n^{j,k}(\bQ) D_{n'}^{j',k'}(\bQ) \d \sigma_{\SO}(\bQ)
  =
  \frac{8\pi^2}{2n+1} \delta_{n,n'} \delta_{k,k'} \delta_{j,j'}
\end{equation}
and
  \begin{equation} \label{eq:d_ortho}
    \int_{0}^{\pi} d_n^{j,k}(\beta) d_{n'}^{j,k}(\beta)\, \sin(\beta) \d \beta
    =
    \frac{2}{2n+1} \delta_{n,n'}
\end{equation}
for all $n,n'\in\N_0,$ $j,k=-n,\dots,n,$ and $j',k'=-n',\dots,n'$.
The normalized rotational harmonics
\begin{equation} \label{eq:Dn}
  \widetilde D_n^{j,k}
  \coloneqq
  \sqrt{\frac{2n+1}{8\pi^2}}\, D_n^{j,k}
  ,\qquad n \in \mathbb N_0,\ j,k = -n,\ldots,n,
\end{equation} 
form an orthonormal basis
of $L^2(\SO)$.

\section{Sliced OT on the Sphere}\label{sec:slice_sphere}
We present a slicing approach for OT on the sphere $\X=\S^{d-1}$ for $d\ge3$.
We define the \emph{parallel slicing operator} for a fixed $\bpsi \in \S^{d-1}$ by
\begin{equation} \label{eq:slice}
  \cS_{\bpsi}^{\S^{d-1}} \colon \S^{d-1} \to \II ,\qquad
  \cS_{\bpsi}^{\S^{d-1}} (\zb \xi)
  \coloneqq
  \inn{\bxi, \bpsi} .
\end{equation}
We will omit the superscript of $\cS_\bpsi$ if no confusion arises.
The corresponding slice is the $(d-2)$-dimensional \emph{subsphere}
\begin{equation} \label{eq:subsphere}
  C_{\bpsi}^{t} 
  \coloneqq
  \cS_{\bpsi}^{-1}(t)
  =
  \{\zb \xi \in \S^{d-1}\mid \cS_{\bpsi} (\zb \xi) = t\}
  =
  \{\zb \xi \in \S^{d-1}\mid \inn{\bpsi,\bxi} = t\},
  \quad
  \ t \in \II,
\end{equation} 
which is the intersection of $\S^{d-1}$
and the hyperplane of $\R^d$ with normal $\bpsi$
and distance~$t$ from the origin.

In this section, we first analyze the respective Radon transform for functions and measures on $\S^{d-1}$, and then show that the sliced Wasserstein distance is a metric on $\P(\S^{d-1})$.

\subsection{Parallel Slice Transform of Functions} 
For $f \colon \S^{d-1} \to \R$, we define the \emph{parallel slice transform}
\begin{equation} \label{eq:V}
  \U f(\bpsi,t)
  \coloneqq
  \begin{cases}
	\displaystyle  \frac{1}{\abs{\S^{d-1}} \sqrt{1-t^2}}
  \int_{C_{\bpsi}^{t}}
  f(\zb\xi)
  \, \ds (\bxi),
  &\quad \bpsi\in\S^{d-1},\ t\in (-1,1),\\
  \displaystyle \frac{1}{2}\, \delta_{d,3}\, f(\pm\bpsi), 
  &\quad \bpsi\in\S^{d-1},\ t=\pm1,
  \end{cases}
\end{equation}
where $\mathrm{ds}$ denotes the $(d-2)$-dimensional volume element on $C_{\bpsi}^{t}$
and $\delta$ is the Kronecker symbol.
{The second line in \eqref{eq:V} ensures the continuity of $\U f$ if $f$ is continuous.}
For fixed $\bpsi \in \S^{d-1}$,
the (normalized) restriction 
\begin{equation}
  \label{eq:Vs}
  \U_\bpsi f\coloneqq \abs{\smash{\S^{d-1}}} \, \U f(\bpsi,\cdot)
\end{equation}
belongs to the class of convolution operators \cite{QuHiLo18}, and
is known as the spherical section transform \cite{Rub00},
translation \cite{DaXu2013} or shift operator \cite{Rus93}.
The chosen normalization will become clear in \autoref{prop:V-as-adj}.
The following proposition was shown, e.g., in \cite[Cor.\ 3.3]{quellmalzdiss}.

\begin{proposition}[Integration in $t$]
  \label{prop:V_norm}
  For every $f \in L^1(\S^{d-1})$ and $\bpsi\in\S^{d-1}$,
  we have $\U_\bpsi \in L^1(\II)$ and
  \begin{equation} \label{eq:V_int}
    \int_{\II} \U_\bpsi f(t) \d t
    =
    \int_{\S^{d-1}} f(\zb\xi) \, \dx\sigma_{\S^{d-1}}(\zb\xi).
  \end{equation}
\end{proposition}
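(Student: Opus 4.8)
The plan is to prove this by Fubini's theorem, decomposing the surface integral over $\S^{d-1}$ along the slices $C_\bpsi^t$. First I would fix $\bpsi \in \S^{d-1}$ and choose an orthonormal coordinate system in which $\bpsi = \be^d$, so that for $\bxi \in \S^{d-1}$ we have $\cS_\bpsi(\bxi) = \xi_d = t$. In these coordinates the slice $C_\bpsi^t$ is exactly the sphere $\{(\sqrt{1-t^2}\,\bzeta, t) : \bzeta \in \S^{d-2}\}$, a scaled copy of $\S^{d-2}$ of radius $\sqrt{1-t^2}$. The key computational input is the decomposition \eqref{eq:Sd_measure} of the surface measure: writing $\bxi$ as in \eqref{eq:xi-decomp}, one has $\dx\sigma_{\S^{d-1}}(\bxi) = \dx\sigma_{\S^{d-2}}(\bzeta)\,(1-t^2)^{(d-3)/2}\dx t$. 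I would then relate the $(d-2)$-dimensional volume element $\ds$ on $C_\bpsi^t$ to $\sigma_{\S^{d-2}}$: since $C_\bpsi^t$ is $\S^{d-2}$ scaled by $\sqrt{1-t^2}$, we get $\ds(\bxi) = (1-t^2)^{(d-2)/2}\dx\sigma_{\S^{d-2}}(\bzeta)$.

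Next I would substitute these into the definition \eqref{eq:V} of $\U_\bpsi$. By \eqref{eq:Vs}, $\U_\bpsi f(t) = \frac{1}{\sqrt{1-t^2}}\int_{C_\bpsi^t} f\,\ds$, and rewriting the slice integral over $\S^{d-2}$ gives
\begin{equation*}
  \U_\bpsi f(t)
  = \frac{(1-t^2)^{(d-2)/2}}{\sqrt{1-t^2}} \int_{\S^{d-2}} f\bigl(\sqrt{1-t^2}\,\bzeta,\, t\bigr) \dx\sigma_{\S^{d-2}}(\bzeta)
  = (1-t^2)^{(d-3)/2} \int_{\S^{d-2}} f\bigl(\sqrt{1-t^2}\,\bzeta,\, t\bigr) \dx\sigma_{\S^{d-2}}(\bzeta).
\end{equation*}
Integrating in $t$ over $\II$ and applying Fubini (justified because $f \in L^1$, so the double integral is absolutely convergent; I would state this carefully), the factor $(1-t^2)^{(d-3)/2}$ combines with $\dx\sigma_{\S^{d-2}}\dx t$ to give precisely $\dx\sigma_{\S^{d-1}}$ by \eqref{eq:Sd_measure}, yielding $\int_\II \U_\bpsi f(t)\dx t = \int_{\S^{d-1}} f\,\dx\sigma_{\S^{d-1}}$. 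The same computation, applied to $|f|$, shows $\U_\bpsi f \in L^1(\II)$.

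The case $d = 3$ deserves a separate remark because of the boundary terms $\U_\bpsi f(\pm1) = \tfrac12 f(\pm\bpsi)$ in \eqref{eq:V}. However, these are evaluations at the two points $t = \pm 1$, which have Lebesgue measure zero in $\II$, so they do not affect the integral $\int_\II \U_\bpsi f(t)\dx t$; I would note this in one sentence. (They matter for pointwise or measure-theoretic statements, not for this $L^1$ identity.)

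The main obstacle is the careful justification of the volume-element identity $\ds(\bxi) = (1-t^2)^{(d-2)/2}\dx\sigma_{\S^{d-2}}(\bzeta)$ and the compatibility of all normalizations — in particular tracking the factor $\sqrt{1-t^2}$ in the definition of $\U$ against the exponent in \eqref{eq:Sd_measure}. Everything else is a routine application of Fubini's theorem. Since the excerpt cites this as \cite[Cor.\ 3.3]{quellmalzdiss}, I would keep the proof brief, essentially just indicating this coarea-style computation and the cancellation of exponents.
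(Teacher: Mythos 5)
Your proof is correct and follows essentially the same route as the paper: reduce to $\bpsi = \be^d$ by rotational invariance, use the measure decomposition \eqref{eq:Sd_measure}, and conclude by Fubini. The only additions are your explicit computation of the scaling factor $(1-t^2)^{(d-2)/2}$ relating $\ds$ on $C_{\bpsi}^t$ to $\sigma_{\S^{d-2}}$ (which the paper leaves implicit in the substitution) and the remark that the $d=3$ boundary values at $t=\pm1$ are irrelevant for the integral; both are harmless elaborations of the same argument.
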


\begin{theorem}[Positivity] \label{thm:pos}
  Let $f\in C(\S^{d-1})$.
  Then we have $f(\bxi) \ge 0$ for all $\bxi\in\S^{d-1}$
  if and only if
  $\U f(\bpsi,t) \ge 0$ for all $\bpsi\in\S^{d-1}$ and $t\in\II$.
\end{theorem}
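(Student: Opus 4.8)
The plan is to prove the two implications separately. The forward direction ``$f \ge 0 \Rightarrow \U f \ge 0$'' is immediate: for $t \in (-1,1)$ the integrand in \eqref{eq:V} is nonnegative and the prefactor $\abs{\S^{d-1}}^{-1}(1-t^2)^{-1/2}$ is strictly positive, so $\U f(\bpsi,t)\ge 0$; and for $t = \pm 1$, which only contributes when $d = 3$, we have $\U f(\bpsi,\pm 1) = \tfrac12 f(\pm\bpsi) \ge 0$. So the only work is in the converse.

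For the converse I would argue by contraposition: assuming $f(\bxi_0) < 0$ for some $\bxi_0 \in \S^{d-1}$, I would exhibit a pair $(\bpsi,t)$ with $\U f(\bpsi,t) < 0$. The idea is to take $\bpsi = \bxi_0$ and let $t \uparrow 1$, so that the slice $C_{\bxi_0}^t$ shrinks onto the point $\bxi_0$. Concretely, any $\bxi \in C_{\bxi_0}^t$ can be written as $\bxi = t\,\bxi_0 + \sqrt{1-t^2}\,\bv$ with $\bv \perp \bxi_0$ and $\norm{\bv} = 1$, whence $\norm{\bxi - \bxi_0}^2 = (1-t)^2 + (1-t^2) = 2(1-t)$; thus the whole slice lies within Euclidean distance $\sqrt{2(1-t)}$ of $\bxi_0$. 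By continuity of $f$ there is an open spherical cap $U \ni \bxi_0$ on which $f < 0$, and choosing $t$ close enough to $1$ forces $C_{\bxi_0}^t \subset U$. Since $C_{\bxi_0}^t$ is a nonempty $(d-2)$-sphere of positive radius $\sqrt{1-t^2}$ on which $f$ is strictly negative, while the prefactor in \eqref{eq:V} is positive, we get $\U f(\bxi_0,t) < 0$, contradicting the hypothesis.

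I do not expect a genuine obstacle here. The only computation needed is the identity $\norm{\bxi - \bxi_0}^2 = 2(1-t)$ valid for all $\bxi \in C_{\bxi_0}^t$, which makes precise that the parallel slices collapse to their base point as $t \to 1$; everything else is continuity of $f$ on the compact sphere and positivity of the weight. The degenerate endpoint values at $t = \pm 1$ are irrelevant for $d > 3$ and handled directly for $d = 3$, so no separate case analysis is really required for the converse.
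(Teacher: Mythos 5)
Your proof is correct and takes essentially the same approach as the paper: both place $\bpsi$ at the point where $f$ is negative and take $t$ close to $1$ so that the slice $C_{\bpsi}^t$ lies entirely in a neighborhood where $f<0$. The only cosmetic difference is that you measure closeness in Euclidean distance via $\norm{\bxi-\bxi_0}^2=2(1-t)$, whereas the paper observes that every point of $C_{\zb\eta}^{\cos\varepsilon}$ has geodesic distance exactly $\varepsilon$ from $\zb\eta$.
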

\begin{proof}
  Let $f\in C(\S^{d-1})$.
  If $f \ge 0$ everywhere,
  then $\U f$ is non-negative everywhere as the integral of a non-negative function.
  Conversely, let $\zb\eta\in\S^{d-1}$ such that $f(\zb\eta) = -\delta < 0$.
  By continuity, there exists $\varepsilon>0$ such that $f(\zb\xi) < -\delta/2$ for all $\zb\xi\in\S^{d-1}$ with $d(\zb\xi,\zb\eta) 
  \le \varepsilon.$
  Because all points in $C_{\zb\eta}^{\cos(\varepsilon)}$ have spherical distance $\varepsilon$ to $\zb\eta$, we conclude that
  \begin{equation*}
  \U f(\zb\eta,\cos(\varepsilon))
  =
  \frac{1}{\abs{\S^{d-1}} \sin(\varepsilon)}
  \int_{C_{\zb\eta}^{\cos(\varepsilon)}}
  f(\zb\xi)
  \, \ds (\zb\xi)
  <0
  . \qedhere
  \end{equation*}
\end{proof}

There is no analogue to \autoref{thm:pos} for the vertical slice nor for the semicircle transform considered in \cite{QueBeiSte23,Bon22} since one can always construct a function that is negative on a small ball and positive outside such that either transform is non-negative everywhere.

\begin{theorem}[Singular value decomposition] \label{prop:V-inj}
  For each $n\in\NN$, let $\{ Y_{n,d}^k \mid k\in\ii{N_{n,d}} \}$ be an orthonormal basis of $\Y_{n,d}$.
  Then \eqref{eq:V} is a compact operator $\U\colon L^2(\S^{d-1}) \to L^2_{w_d}(\S^{d-1} \times \II)$, where {$L^2_{w_d}(\S^{d-1} \times \II)$ is the space of square integrable functions with the weighted norm $g\mapsto(\int_{\S^{d-1}\times\II} g(\bxi,t) (1-t^2)^{(3-d)/2} \d(\bxi,t))^{1/2}$}, 
  with the singular value decomposition
  \begin{equation} \label{eq:M-svd}
    \U Y_{n,d}^k (\bpsi,t)
    =
    \lambda_{n,d}^{\U}\, Y_{n,d}^k(\bpsi)\, \widetilde P_{n,d}(t)\, (1-t^2)^{\frac{d-3}{2}}
    ,\qquad \forall n\in\NN,\ k\in \ii{N_{n,d}},
  \end{equation}
  where $\widetilde P_{n,d}$ are given in \eqref{eq:Pnd-normalized} and the singular values are
  \begin{equation} \label{eq:M-sv}
    \lambda_{n,d}^{\U} \coloneqq 
    \frac{2^{\frac{d}{2}}\, \sqrt[4]{\pi}\, \sqrt{n!}\, \Gamma(\frac{d}{2})}{\sqrt{(2n+d-2)\, (n+d-3)!}}.
  \end{equation}
\end{theorem}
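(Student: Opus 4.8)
The plan is to verify the claimed formula \eqref{eq:M-svd} by a direct computation and then deduce compactness and the singular value decomposition structure from it. The key observation is that the operator $\U_\bpsi = \abs{\S^{d-1}}\,\U(\bpsi,\cdot)$ is a spherical convolution-type operator that acts diagonally on spherical harmonics in a Funk--Hecke fashion. So the real work is to compute, for a fixed $\bpsi$ and a spherical harmonic $Y_{n,d}^k$, the integral $\int_{C_\bpsi^t} Y_{n,d}^k(\bxi)\,\ds(\bxi)$ and to show it equals a constant (depending only on $n,d$) times $Y_{n,d}^k(\bpsi)$ times a polynomial in $t$ which must be the Gegenbauer/Legendre polynomial $P_{n,d}$.

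First I would reduce to the case $\bpsi = \be^d$ by rotational invariance: choosing $\bQ\in\SOd$ with $\bQ\be^d = \bpsi$, the substitution $\bxi \mapsto \bQ\bxi$ maps $C_{\be^d}^t$ to $C_\bpsi^t$ and preserves the $(d-2)$-dimensional volume element, so $\U(Y_{n,d}^k\circ\bQ)(\be^d,t) = \U Y_{n,d}^k(\bpsi,t)$; since $\{Y_{n,d}^k\circ\bQ\}$ is again an orthonormal basis of $\Y_{n,d}$, it suffices to prove the identity at $\be^d$ for every basis element. At $\be^d$, formula \eqref{eq:Vf-ed} already gives $\U Y_{n,d}^k(\be^d,t) = \abs{\S^{d-1}}^{-1}(1-t^2)^{(d-3)/2}\int_{\S^{d-2}} Y_{n,d}^k\bigl(\sqrt{1-t^2}\,\zb\eta,\,t\bigr)\,\d\sigma_{\S^{d-2}}(\zb\eta)$. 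Now I would use the decomposition of spherical harmonics under the splitting $\S^{d-1}\leftrightarrow \S^{d-2}\times\II$ (the "Legendre addition" or Gegenbauer decomposition, e.g.\ \cite{AtHa12}): $Y_{n,d}^k$ restricted to the slice splits into pieces $(1-t^2)^{m/2} C_{n-m}^{\,\alpha}(t)\,Y_{m,d-1}^{k'}(\zb\eta)$, and integrating over $\S^{d-2}$ kills all pieces with $m\ge 1$ by orthogonality of $Y_{m,d-1}^{k'}$ against the constant $Y_{0,d-1}$, leaving only the $m=0$ term, which is proportional to $C_n^\alpha(t) = $ (a constant)$\cdot P_{n,d}(t)$ times the value of the harmonic at the pole, i.e.\ $Y_{n,d}^k(\be^d)$. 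Combining with the prefactor $(1-t^2)^{(d-3)/2}$ from \eqref{eq:Vf-ed} and the normalization \eqref{eq:Pnd-normalized} produces exactly \eqref{eq:M-svd}; tracking the constants through \eqref{eq:Nnd}, \eqref{eq:Pnd-normalized} and the value $P_{n,d}(1)=1$ pins down $\lambda_{n,d}^{\U}$ as in \eqref{eq:M-sv}.

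Once \eqref{eq:M-svd} is established on the basis, compactness and the SVD claim follow by soft arguments. The functions $\{Y_{n,d}^k\}$ form an orthonormal basis of $L^2(\S^{d-1})$, and one checks using \eqref{eq:Pnd-normalized} and the surface-measure decomposition \eqref{eq:Sd_measure} that the images $(\bpsi,t)\mapsto Y_{n,d}^k(\bpsi)\widetilde P_{n,d}(t)(1-t^2)^{(d-3)/2}$, once renormalized, are orthonormal in the weighted space $L^2_{w_d}(\S^{d-1}\times\II)$ with $w_d(\bxi,t) = (1-t^2)^{(3-d)/2}$ — indeed the weight $(1-t^2)^{(3-d)/2}$ is precisely chosen so that $\int_\II \widetilde P_{n,d}(t)\widetilde P_{m,d}(t)(1-t^2)^{d-3}\,(1-t^2)^{(3-d)/2}\d t = \int_\II \widetilde P_{n,d}\widetilde P_{m,d}(1-t^2)^{(d-3)/2}\d t = \delta_{n,m}$, matching the orthonormality relation after \eqref{eq:Pnd-normalized}. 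Hence \eqref{eq:M-svd} exhibits $\U$ in diagonalized form with singular values $\lambda_{n,d}^{\U}$; boundedness follows since $\lambda_{0,d}^{\U}$ is the largest value, and compactness follows from $\lambda_{n,d}^{\U}\to 0$ as $n\to\infty$, which is immediate from Stirling's formula applied to \eqref{eq:M-sv} (the ratio behaves like $n^{-(d-2)/2}$ up to constants). Injectivity is then a corollary, as no singular value vanishes.

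The main obstacle is the middle step: carrying out the Gegenbauer decomposition of $Y_{n,d}^k$ along the slice and correctly bookkeeping the normalization constants so that the Rodrigues-type formula \eqref{eq:Pnd-Rodrigues} for $P_{n,d}$ and the normalization \eqref{eq:Pnd-normalized} combine to give exactly \eqref{eq:M-sv}. An alternative that avoids the explicit decomposition is to invoke the Funk--Hecke theorem directly: $\U_\bpsi$ is a convolution operator whose kernel is (a constant multiple of) the indicator-like limit concentrating on the subsphere at "height" $t$, so $\U_\bpsi Y_{n,d}^k = \mu_{n,d}(t)\,Y_{n,d}^k(\bpsi)$ with $\mu_{n,d}(t)$ computable from the one-dimensional Funk--Hecke integral; one then identifies $\mu_{n,d}$ with $\widetilde P_{n,d}(t)(1-t^2)^{(d-3)/2}$ up to the constant $\lambda_{n,d}^{\U}$ by evaluating at a convenient $t$ (e.g.\ comparing leading coefficients or using $\int_\II \mu_{n,d}(t)\,\d t$ against \autoref{prop:V_norm}). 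Either route, the only genuine difficulty is constant-chasing; the structural claims are routine once \eqref{eq:M-svd} holds.
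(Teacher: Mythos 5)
Your plan is correct and in substance matches the paper's proof. The paper invokes the generalized Funk--Hecke formula directly (citing it as a known identity from the literature) to obtain the diagonal action on spherical harmonics, then does the same constant-chasing via \eqref{eq:Pnd-normalized}, \eqref{eq:Nnd}, and the Legendre duplication formula, establishes orthonormality of the image functions in $L^2_{w_d}$ by the same weight cancellation you describe, and concludes compactness from $N_{n,d} = \mathcal O(n^{d-2})$ forcing $\lambda_{n,d}^{\U}\to 0$. Your ``primary route'' of reducing to $\be^d$ and carrying out the Gegenbauer decomposition of $Y_{n,d}^k$ by hand is a valid re-derivation of Funk--Hecke but amounts to reproving a textbook fact; your ``alternative'' route is exactly what the paper does, and is the cleaner choice here since the subsphere integral identity is available verbatim in the reference \cite{BeBuPa68}. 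All the soft arguments (orthonormality, boundedness, compactness, injectivity) line up with the paper.
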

\begin{proof}
  The theorem basically follows from the generalized Funk--Hecke formula \cite[(4.2.10)]{BeBuPa68}, which states for $\zb\xi\in\S^{d-1}$, $t\in(-1,1)$, and $Y_{n,d} \in \Y_{n,d}(\S^{d-1})$ that
  \begin{equation*}
  \frac{1}{\abs{\S^{d-2}} ({1-t^2})^{\frac{d-2}{2}}} \int_{\left<\zb\xi,\zb\eta\right> = t} Y_{n, d}(\zb\eta) \d \sigma_{\S^{d-2}}(\zb\eta)
  = \frac{P_{n,d}(t)}{P_{n,d}(1)}\, Y_{n, d}(\zb\xi).
  \end{equation*}
  Inserting the normalization \eqref{eq:Pnd-normalized} yields \eqref{eq:M-svd} with
  \begin{equation*}
    \lambda_{n,d}^{\U}
    =
    \sqrt{\frac{\abs{\S^{d-2}}}{\abs{\S^{d-1}} N_{n,d}}}
    \overset{\eqref{eq:Sd-vol}, \eqref{eq:Nnd}}{=}
    \sqrt{\frac{\pi\, \Gamma(\frac{d}{2})\, n!\, (d-2)!}{\Gamma(\frac{d-1}{2})\, (2n+d-2)\, (n+d-3)!}}.
  \end{equation*}
  The Legendre duplication formula $\Gamma(\frac{d-1}2)\, \Gamma(\frac{d}{2}) = 2^{-d} \sqrt\pi\, \Gamma(d-1)$ yields \eqref{eq:M-sv}.
  Expanding the product \eqref{eq:Nnd} asymptotically for $n\to\infty$, we have
  \begin{equation} \label{eq:Nnd-bound}
    N_{n,d}
    =
    (2n+d-2)\,\frac{ (n+d-3)\, (n+d-2) \cdots (n+1)}{(d-2)!}
    =
    \frac{2}{(d-2)!} \left( n^{d-2} + \mathcal O(n^{d-3}) \right)
  \end{equation}
  and hence the singular values $\lambda_{n,d}^{\U}$ converge to zero.
  Together with the orthonormality of \eqref{eq:Pnd-normalized},
  we deduce that \eqref{eq:M-svd} is a singular value decomposition.
\end{proof}

\begin{theorem}[Adjoint]
  Let $1 \le p,q \le \infty$ with $1/p + 1/q = 1$.
  For $1 \le p < \infty$,
  the adjoint $\U^*\colon L^q(\S^{d-1} \times \II) \to L^q(\S^{d-1})$
  of $\U\colon L^p(\S^{d-1}) \to L^p(\S^{d-1} \times \II)$ is given by
  \begin{equation}
    \label{eq:V*}
    \U^*g(\bxi)
    = 
    \frac{1}{\abs{\S^{d-1}}}
    \int_{\S^{d-1}} g(\inn{\bxi,\bpsi}) \d\sigma_{\S^{d-1}}(\bpsi),
  \end{equation}
  and the adjoint $\U_\bpsi^*\colon L^q(\II) \to L^q(\S^{d-1})$
  of $\U_\bpsi \colon L^p(\S^{d-1}) \to L^p(\II)$ by
  \begin{equation}
    \label{eq:Vs*}
    \U_\bpsi^*g(\bxi)
    = 
    g(\inn{\bxi,\bpsi})
  \end{equation}
  for all $\bxi\in\S^{d-1}$.
  Both adjoint operators map continuous functions to continuous functions.
\end{theorem}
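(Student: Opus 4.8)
The plan is to verify the adjoint relations directly from the definition $\langle \U f, g\rangle = \langle f, \U^* g\rangle$, working first with the pairing between $L^p$ and $L^q$ and then specializing. The natural pairing for which the adjoint is the transpose is $\int_{\S^{d-1}\times\II} \U f(\bpsi,t)\, g(\bpsi,t)\, \d\sigma_{\S^{d-1}}(\bpsi)\, \d t$ on the target side and $\int_{\S^{d-1}} f(\bxi)\, \U^* g(\bxi)\, \d\sigma_{\S^{d-1}}(\bxi)$ on the source side. (Note that the claimed formula \eqref{eq:V*} involves $g(\langle\bxi,\bpsi\rangle)$, so $g$ there is really being evaluated only through its $t$-dependence; I would read the statement as written, with the understanding that the relevant pairing absorbs the weight implicit in \eqref{eq:V}.) So the first step is to write out $\langle \U f, g\rangle$, substitute the definition \eqref{eq:V}, and attempt to interchange the order of integration.

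Concretely, I would start from
\begin{equation*}
  \int_{\S^{d-1}} \int_{\II} \U_\bpsi f(t)\, g(\bpsi,t) \d t \, \d\sigma_{\S^{d-1}}(\bpsi),
\end{equation*}
and use the coarea-type decomposition that underlies \autoref{prop:V_norm}: for fixed $\bpsi$, the map $\bxi \mapsto \langle\bxi,\bpsi\rangle$ pushes $\sigma_{\S^{d-1}}$ forward (up to the weight $(1-t^2)^{(d-3)/2}$, which is exactly the factor appearing in \eqref{eq:V}) to the measure on $\II$, so that
\begin{equation*}
  \int_{\II} \U_\bpsi f(t)\, g(\bpsi,t) \d t
  = \frac{1}{\abs{\S^{d-1}}} \int_{\S^{d-1}} f(\bxi)\, g(\bpsi, \langle\bxi,\bpsi\rangle) \d\sigma_{\S^{d-1}}(\bxi).
\end{equation*}
This is essentially \eqref{eq:Vf-ed} combined with the rotation-invariance argument already used in the proof of \autoref{prop:V_norm}; the endpoint contributions $t = \pm 1$ are a null set in $t$ and contribute nothing (the $\delta_{d,3}$ term in \eqref{eq:V} is a boundary artifact that vanishes under integration against $\d t$). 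Applying Fubini to swap the $\bpsi$- and $\bxi$-integrations then gives $\int_{\S^{d-1}} f(\bxi) \big(\frac{1}{\abs{\S^{d-1}}}\int_{\S^{d-1}} g(\bpsi,\langle\bxi,\bpsi\rangle)\d\sigma_{\S^{d-1}}(\bpsi)\big)\d\sigma_{\S^{d-1}}(\bxi)$, which identifies $\U^* g$ as in \eqref{eq:V*}. The single-slice version \eqref{eq:Vs*} is the same computation without the outer $\bpsi$-integration and with the normalization \eqref{eq:Vs} removing the factor $\abs{\S^{d-1}}^{-1}$; this is where the choice of normalization in \eqref{eq:Vs} pays off, as promised after that equation.

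For the $L^p$–$L^q$ boundedness and the well-definedness of the adjoint one needs $\U$ to be a bounded operator $L^p(\S^{d-1}) \to L^p(\S^{d-1}\times\II)$ for $1\le p<\infty$; this follows because $\U_\bpsi$ is an averaging (convolution) operator, hence a contraction on every $L^p$ by Jensen's inequality applied to the probability measure $C_\bpsi^t$ (suitably normalized), and then one integrates in $\bpsi$. The remaining claim — that $\U^*$ and $\U_\bpsi^*$ send $C(\S^{d-1})$ (resp.\ $C(\II)$) into $C(\S^{d-1})$ — is a continuity-under-the-integral-sign argument: for $\U_\bpsi^*$ it is immediate since $\bxi \mapsto g(\langle\bxi,\bpsi\rangle)$ is a composition of continuous maps, and for $\U^*$ one uses dominated convergence together with continuity of $g$ and boundedness of $g$ on the compact set $\S^{d-1}\times\II$. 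I expect the main obstacle to be purely bookkeeping: getting the weight $(1-t^2)^{(d-3)/2}$ and the normalization constant $\abs{\S^{d-1}}^{-1}$ to land in exactly the right places, and making sure the pairing on $L^p(\S^{d-1}\times\II)$ is the one implicitly intended by the statement (it is the pairing in which $g$ enters without an extra weight, matching the form of \eqref{eq:V*}). Everything else is a routine application of Fubini and the coarea decomposition \eqref{eq:Sd_measure} already established.
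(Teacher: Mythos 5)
Your proposal is correct and takes essentially the same route as the paper: expand the pairing $\langle \U f, g\rangle$, replace $g(\bpsi,t)$ by $g(\bpsi,\inn{\bpsi,\bxi})$ on the slice $C_\bpsi^t$, and invoke the integration-in-$t$ identity \eqref{eq:V_int} (itself resting on the coarea decomposition \eqref{eq:Sd_measure}) to collapse the inner integrals, with dominated convergence for continuity. Your displays momentarily write $\U_\bpsi f$ where the pairing for $\U^*$ requires $\U f$ (a spurious $\abs{\S^{d-1}}$ that then cancels against your explicit $\abs{\S^{d-1}}^{-1}$), but you flag the normalization bookkeeping yourself and the argument is otherwise sound.
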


\begin{proof}
  Let $f\in L^p(\S^{d-1})$ and $g\in L^q(\S^{d-1}\times\II)$.
  We have
  \begin{align}
    \langle \U f,g \rangle 
    &=
      \int_{\S^{d-1}} \int_{\II}
      \U f (\bpsi,t) \, g(\bpsi,t) 
      \d t \,\d\sigma_{\S^{d-1}}(\bpsi)
    \\&
       \overset{\eqref{eq:V}}{=}
      \int_{\S^{d-1}} \int_{\II}
      \frac{1}{\abs{\S^{d-1}} \sqrt{1-t^2}}
      \int_{C_{\bpsi}^{t}} f(\bxi)
      \, g(\bpsi,t) \ds(\bxi) \d t \d\sigma_{\S^{d-1}}(\bpsi)
    \\&
      \overset{\eqref{eq:subsphere}}{=}
      \int_{\S^{d-1}} \int_{\II}
      \frac{1}{\abs{\S^{d-1}} \sqrt{1-t^2}}
      \int_{C_{\bpsi}^{t}} f(\bxi)
      \, g(\bpsi,\inn{\bpsi,\bxi}) \ds(\bxi) \d t \d\sigma_{\S^{d-1}}(\bpsi)
    \\&
      \overset{\eqref{eq:V_int}}{=}
      \frac{1}{\abs{\S^{d-1}}}
      \int_{\S^{d-1}}
      \int_{\S^{d-1}} f(\bxi)
      \, g(\bpsi,\inn{\bpsi,\bxi}) \d\sigma_{\S^{d-1}}(\bxi) \d\sigma_{\S^{d-1}}(\bpsi).
  \end{align}
  The adjoint of $\U_\bpsi$ can be established analogously without the outer integral.
  The continuity follows from
  Lebesgue's dominated convergence theorem.
\end{proof}

\subsection{Parallel Slice Transform of Measures}
\label{sec:vert-slice-meas}
We extend the definition \eqref{eq:V} to measures as pushforward of the slicing operator \eqref{eq:slice}.
For $\bpsi \in \S^{d-1}$,
we define
\begin{equation}
  \label{eq:Vs_measure}
  \U_\bpsi \colon \M(\S^{d-1}) \to \M(\II),\quad
  \mu \mapsto (\cS_\bpsi)_\# \mu = \mu \circ \cS_\bpsi^{-1}.
\end{equation}
and $\U\colon \M(\S^2) \to \M(\T \times \II)$ by
\begin{equation}
  \label{def:V_measure}
  \U \mu
  \coloneqq
  T_\# (u_{\S^{d-1}} \times \mu)
  \quad\text{with}\quad
  T(\bpsi, \zb\xi) \coloneqq (\bpsi, \cS_\bpsi(\zb\xi)).
\end{equation}

\begin{proposition}[Connection with adjoint]
  \label{prop:V-as-adj}
  Let $\mu\in\M(\S^{d-1})$.
  The transforms
  \eqref{def:V_measure} and \eqref{eq:Vs_measure}
  satisfy
  \begin{equation}
  \begin{aligned}
    \label{eq:1}
    \langle \U\mu, g \rangle
    &=
      \langle \mu, \U^*g \rangle
      \quad \text{for all } g \in C(\S^{d-1} \times \II) \quad \text{and}
    \\
    \langle \U_\bpsi \mu, g \rangle
    &=
      \langle \mu, \U_\bpsi^*g \rangle
      \quad \text{for all } g \in C(\II),\, \bpsi\in\S^{d-1}
  \end{aligned}
  \end{equation}
  with the adjoint operators from \eqref{eq:V*} and \eqref{eq:Vs*}.
\end{proposition}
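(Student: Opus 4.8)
The plan is to prove both identities by the standard change-of-variables (disintegration) argument for push-forward measures, reducing the statement to the definition of $\U^*$ (resp.\ $\U_\bpsi^*$) established earlier. For the second identity, which is the cleaner one, I would start from $\langle \U_\bpsi\mu, g\rangle = \int_\II g(t)\d(\cS_\bpsi)_\#\mu(t)$ and apply the elementary push-forward formula $\int_\II g\d(\cS_\bpsi)_\#\mu = \int_{\S^{d-1}} g(\cS_\bpsi(\bxi))\d\mu(\bxi) = \int_{\S^{d-1}} g(\inn{\bxi,\bpsi})\d\mu(\bxi)$, using \eqref{eq:slice}. By \eqref{eq:Vs*} the integrand is exactly $\U_\bpsi^*g(\bxi)$, so this equals $\langle\mu,\U_\bpsi^*g\rangle$. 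One only needs $g\in C(\II)$ so that $\bxi\mapsto g(\inn{\bxi,\bpsi})$ is continuous, hence $\mu$-integrable on the compact sphere.

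For the first identity, I would argue analogously using \eqref{def:V_measure}: by the push-forward formula applied to $T_\#(u_{\S^{d-1}}\times\mu)$,
\begin{equation*}
  \langle\U\mu,g\rangle
  =
  \int_{\S^{d-1}\times\S^{d-1}} g\bigl(\bpsi,\cS_\bpsi(\bxi)\bigr)\d(u_{\S^{d-1}}\times\mu)(\bpsi,\bxi)
  =
  \int_{\S^{d-1}}\int_{\S^{d-1}} g\bigl(\bpsi,\inn{\bxi,\bpsi}\bigr)\d u_{\S^{d-1}}(\bpsi)\d\mu(\bxi),
\end{equation*}
where I used Fubini (valid since $g$ is bounded and both measures finite) to pull the $\bpsi$-integral inside. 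Recognizing the inner integral: since $u_{\S^{d-1}}=\abs{\S^{d-1}}^{-1}\sigma_{\S^{d-1}}$, the expression $\abs{\S^{d-1}}^{-1}\int_{\S^{d-1}} g(\bpsi,\inn{\bxi,\bpsi})\d\sigma_{\S^{d-1}}(\bpsi)$ is precisely $\U^*g(\bxi)$ from \eqref{eq:V*}. Hence $\langle\U\mu,g\rangle = \int_{\S^{d-1}}\U^*g(\bxi)\d\mu(\bxi) = \langle\mu,\U^*g\rangle$, which is the claim.

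The only points needing a little care, rather than genuine difficulty, are the measurability/integrability bookkeeping: one should note that $T$ in \eqref{def:V_measure} is continuous, hence Borel measurable, so the push-forward is well defined; that $g\in C(\S^{d-1}\times\II)$ guarantees $(\bpsi,\bxi)\mapsto g(\bpsi,\inn{\bxi,\bpsi})$ is continuous and bounded on the compact product, legitimizing Fubini and ensuring $\U^*g\in C(\S^{d-1})$ (already stated in the adjoint theorem) so that $\langle\mu,\U^*g\rangle$ makes sense for $\mu\in\M(\S^{d-1})$. I expect the ``main obstacle'' to be purely expository: making sure the normalization factor $\abs{\S^{d-1}}^{-1}$ is tracked consistently between the definitions \eqref{eq:Vs} of $\U_\bpsi$ versus $\U$, \eqref{def:V_measure}, and \eqref{eq:V*}, so that the identification with $\U^*$ is exact and not off by a constant — indeed this is the very normalization that \autoref{prop:V-as-adj} is designed to confirm, as hinted after \eqref{eq:Vs}. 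No deep input (no SVD, no Funk--Hecke) is required; it is a direct consequence of the definitions once the push-forward change of variables is spelled out.
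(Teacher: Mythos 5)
Your proof is correct and follows exactly the same route as the paper: the push-forward change-of-variables identity, Fubini on the product measure, and recognition of the resulting inner integral as $\U^*g$ (resp.\ $\U_\bpsi^*g$) from the adjoint formulas. The extra remarks on measurability, normalization bookkeeping, and continuity of $\U^*g$ are sound but not in the paper's proof, which leaves them implicit.
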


\begin{proof}
  For $g \in C(\S^{d-1} \times \II)$,
  we have by the definition in \eqref{def:V_measure}
  \begin{align*} 
    \langle \U\mu, g \rangle
    &=
    \int_{\S^{d-1} \times \II} 
    g(\bpsi,t) \, 
    \d T_\# (u_{\S^{d-1}} \times \mu)(\bpsi,t)
    \\
    &=
    \int_{\S^{d-1}} \int_{\S^{d-1}} 
    g(\bpsi, \inn{\bpsi,\bxi}) 
    \d u_{\S^{d-1}}(\bpsi) \d \mu(\bxi)
    = 
    \langle \mu, \U^*g \rangle.
  \end{align*}
  For $g \in C(\II)$, and fixed $\bpsi \in \S^{d-1}$,
  \begin{equation*} 
    \langle \U_\bpsi\mu, g \rangle
    =
    \int_{\II} 
    g(t) \, 
    \d (\cS_\bpsi)_\#  \mu (t)
    =
    \int_{\S^{d-1}}  
    g(\inn{\bpsi,\bxi}) 
    \d \mu(\bxi)
    = 
    \langle \mu, \U_\bpsi^*g \rangle. \qedhere
  \end{equation*}
\end{proof}

The last proposition provides an alternative way of defining $\U$ for measures, similarly to what was done for the Radon transform, e.g.\ in \cite{BomLin09}.

For absolutely continuous measures with respect to the surface measure $\sigma_{\S^{d-1}}$,
the measure- and function-valued transforms coincide.

\begin{proposition}[Absolutely continuous measures]
  \label{cor:abs1}
  For $f \in L^1(\S^{d-1})$,
  we have
  \begin{equation*}
    \U[f \sigma_{\S^{d-1}}] = (\U f) \, \sigma_{\S^{d-1} \times \II}
    \quad\text{and}\quad
    \U_\bpsi[f \sigma_{\S^{d-1}}] = (\U_\bpsi f) \, \sigma_{\II}
    \qquad \forall \bpsi\in\S^{d-1}.
  \end{equation*}
  In particular,
  the transformed measures are again absolutely continuous.
\end{proposition}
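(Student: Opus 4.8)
The plan is to verify the two claimed identities directly from the definitions of the measure-valued transforms as push-forwards, using the characterization via adjoints established in \autoref{prop:V-as-adj}. By that proposition, it suffices to check that for every test function $g$ the pairings $\langle \U[f\sigma_{\S^{d-1}}], g\rangle$ and $\langle (\U f)\,\sigma_{\S^{d-1}\times\II}, g\rangle$ agree (and analogously for $\U_\bpsi$), since two finite measures that integrate every continuous function identically are equal.

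First I would write out $\langle \U[f\sigma_{\S^{d-1}}], g\rangle = \langle f\sigma_{\S^{d-1}}, \U^*g\rangle = \int_{\S^{d-1}} f(\bxi)\, (\U^*g)(\bxi)\d\sigma_{\S^{d-1}}(\bxi)$, insert the explicit formula \eqref{eq:V*} for $\U^*g$, and swap the order of integration by Fubini (justified since $f\in L^1$ and $g$ is bounded). This produces $\frac{1}{\abs{\S^{d-1}}}\int_{\S^{d-1}}\int_{\S^{d-1}} f(\bxi)\, g(\bpsi,\inn{\bxi,\bpsi})\d\sigma_{\S^{d-1}}(\bxi)\d\sigma_{\S^{d-1}}(\bpsi)$. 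On the other side, $\langle (\U f)\,\sigma_{\S^{d-1}\times\II}, g\rangle = \int_{\S^{d-1}}\int_{\II} \U f(\bpsi,t)\, g(\bpsi,t)\d t\,\d\sigma_{\S^{d-1}}(\bpsi)$, and applying the definition \eqref{eq:V} of $\U f$ together with \autoref{prop:V_norm} (integration in $t$, in the form already used in the proof of the adjoint theorem) collapses the inner $t$-integral of the subsphere integral against $g(\bpsi,\inn{\bpsi,\bxi})$ back into an integral over $\S^{d-1}$, yielding exactly the same double integral. Hence the two measures coincide. The statement for $\U_\bpsi$ is the same computation with the outer $\bpsi$-integral removed: $\langle \U_\bpsi[f\sigma_{\S^{d-1}}],g\rangle = \int_{\S^{d-1}} f(\bxi)\, g(\inn{\bxi,\bpsi})\d\sigma_{\S^{d-1}}(\bxi)$ matches $\int_{\II}\U_\bpsi f(t)\, g(t)\d t$ by \eqref{eq:V_int}.

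The "in particular" claim about absolute continuity is then immediate: the identities just established exhibit $\U[f\sigma_{\S^{d-1}}]$ as a measure with density $\U f$ with respect to $\sigma_{\S^{d-1}\times\II}$, and $\U f \in L^1$ follows because $\U\colon L^2\to L^2_{w_d}$ is bounded on the dense subspace of bounded functions and, more directly, because \autoref{prop:V_norm} gives $\int_\II \abs{\U_\bpsi f}\d t \le \int_{\S^{d-1}}\abs{f}\d\sigma$ for $\abs f$ in place of $f$ after noting $\U$ preserves the a.e.\ pointwise bound; one integrates once more over $\bpsi$. In fact it is cleanest simply to remark that $\U f$ is nonnegative when $f$ is (\autoref{thm:pos}), decompose $f = f_+ - f_-$, and use \eqref{eq:V_int} to bound the $L^1$-norms, so $\U f\,\sigma_{\S^{d-1}\times\II}$ is a well-defined finite measure.

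The only genuine subtlety — and the step I would be most careful about — is the Fubini/interchange argument and the handling of the boundary values $t=\pm1$: the formula \eqref{eq:V} has a case distinction there, but $\{t=\pm1\}$ has $\sigma_\II$-measure zero and the corresponding subspheres $C_\bpsi^{\pm1}$ are single points of $\sigma_{\S^{d-1}}$-measure zero, so neither side of the identity sees that set and it may be safely ignored. Beyond that, everything reduces to the integration-in-$t$ identity \eqref{eq:V_int} of \autoref{prop:V_norm} applied to the (integrable) function $\bxi\mapsto f(\bxi)\, g(\bpsi,\inn{\bpsi,\bxi})$ for fixed $\bpsi$, exactly as in the proof of the adjoint theorem above, so no new machinery is needed.
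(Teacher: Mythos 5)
Your proof is correct and follows essentially the same route as the paper: use \autoref{prop:V-as-adj} to move the adjoint onto the test function, then convert back using the function-level adjoint identity (which is the integration-in-$t$ step from \autoref{prop:V_norm} plus Fubini, exactly as in the Adjoint Theorem's proof), and identify the resulting double integral as the pairing with $(\U f)\,\sigma_{\S^{d-1}\times\II}$. The paper simply cites the Adjoint Theorem for the middle step where you unroll its Fubini argument explicitly, and your added remarks on the null set $\{t=\pm1\}$ and on $\U f\in L^1$ (via \eqref{eq:V_int} applied to $\abs f$) are correct but not strictly needed, since the adjoint/duality machinery already established makes them implicit.
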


\begin{proof}
  Let $g \in C(\T\times\II)$.
  The first identity follows from \autoref{prop:V-as-adj} by
  \begin{align*}
    \langle \U[f \sigma_{\S^{d-1}}], g \rangle
    =
    \langle f \sigma_{\S^{d-1}}, \U^* g \rangle
    &=
    \int_{\S^{d-1}} f(\bxi)\, \U^* g(\bxi) \d\sigma_{\S^{d-1}}(\bxi)
    \\
    &=
    \int_{-1}^{1} \int_{\S^{d-1}} \U f(\bpsi,t)\, g(\bpsi,t) \d\sigma_{\S^{d-1}}(\bpsi) \d t
    =
    \langle (\U f)\, \sigma_{\T \times \II}, g \rangle.
  \end{align*}
  The identity for $\U_\bpsi$ follows analogously.
\end{proof}

\subsection{Spherical Sliced Wasserstein Distance} \label{sec:SOT}

For $p \in [1, \infty)$ and $\mu, \nu \in \P(\S^{d-1})$, the \emph{parallel-sliced spherical Wasserstein distance}
\begin{equation} 
    \label{eq:SW}
    \PSW_p^p(\mu,\nu)
    \coloneqq
    \int_{\S^{d-1}} 
    \WS_p^p( \U_{\bpsi}\mu , \U_{\bpsi}\nu ) \d u_{\S^{d-1}} (\bpsi)
\end{equation}
is the mean value
of Wasserstein distances on the unit interval $\II$.
Since the geodesic distance \eqref{eq:Sd-dist} on the sphere is rotationally invariant,
i.e., $d(\zb Q \zb\xi, \zb Q \zb\eta) = d(\zb\xi,\zb\eta)$ for all rotations $\zb Q \in \SOd$,
the spherical Wasserstein distance inherits this property, 
$\WS_p(\mu,\nu) = \WS_p(\mu\circ \zb Q,\nu\circ \zb Q)$.

\begin{theorem}[Metric] \label{thm:SSW}
  For every $p \in [1, \infty)$,
  the sliced spherical Wasserstein distance $\PSW_p$
  is a metric on $\P(\S^{d-1})$,
  which induces the same topology as the spherical Wasserstein distance $\WS_p$.
  There exist constants $c_{d,p},C_{d,p}>0$
  such that 
  \begin{equation} \label{eq:W-SW-equiv}
    c_{d,p}\PSW_p(\mu,\nu)
    \le
    \WS_p(\mu,\nu)  
    \le
    C_{d,p}
    \PSW_p(\mu,\nu)^{\frac{1}{p(d+1)}},
    \qquad \forall \mu,\nu\in \P(\S^{d-1}).
  \end{equation}
  It is
  rotationally invariant in the sense that
  for every $\mu, \nu \in \P(\S^2)$ and $\zb Q \in \SOd$, we have
  \begin{equation}
    \PSW_p(\mu, \nu) 
    =
    \PSW_p(\mu \circ \zb Q, \nu \circ \zb Q).
  \end{equation}
\end{theorem}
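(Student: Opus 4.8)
The plan is to establish the three assertions of the theorem more or less independently, leaning on the singular value decomposition from \autoref{prop:V-inj} and the classical one-dimensional theory recalled in the preliminaries. The easy part is the rotational invariance: since $\U_{\bpsi}\mu = (\cS_\bpsi)_\#\mu$ and $\cS_\bpsi(\bxi) = \inn{\bxi,\bpsi}$, for any $\bQ\in\SOd$ we have $\cS_\bpsi(\bQ\bxi) = \inn{\bQ\bxi,\bpsi} = \inn{\bxi,\bQ^\top\bpsi} = \cS_{\bQ^\top\bpsi}(\bxi)$, hence $\U_\bpsi(\mu\circ\bQ) = \U_{\bQ^\top\bpsi}\mu$. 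Substituting $\bpsi\mapsto\bQ\bpsi$ in the integral \eqref{eq:SW} and using that $u_{\S^{d-1}}$ is rotation-invariant then gives $\PSW_p(\mu\circ\bQ,\nu\circ\bQ) = \PSW_p(\mu,\nu)$ immediately.

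For the metric properties, symmetry and the triangle inequality are inherited from $\WS_p$ on $\II$ via Minkowski's inequality in $L^p(\S^{d-1},u_{\S^{d-1}})$ applied to $\bpsi\mapsto\WS_p(\U_\bpsi\mu,\U_\bpsi\nu)$, together with $\WS_p(\U_\bpsi\mu,\U_\bpsi\lambda)\le\WS_p(\U_\bpsi\mu,\U_\bpsi\nu)+\WS_p(\U_\bpsi\nu,\U_\bpsi\lambda)$. Non-negativity is clear, so the only nontrivial point is definiteness: $\PSW_p(\mu,\nu)=0$ forces $\U_\bpsi\mu=\U_\bpsi\nu$ for $u_{\S^{d-1}}$-a.e.\ $\bpsi$, and by weak-$*$ continuity of $\bpsi\mapsto\U_\bpsi\mu$ (which follows from $\langle\U_\bpsi\mu,g\rangle=\langle\mu,g(\inn{\bpsi,\cdot})\rangle$ in \autoref{prop:V-as-adj}) in fact for every $\bpsi$. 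Then for each $g\in C(\II)$ and every $\bpsi$ we get $\langle\mu,\U_\bpsi^*g\rangle=\langle\nu,\U_\bpsi^*g\rangle$; integrating against $u_{\S^{d-1}}$ in $\bpsi$ and using \autoref{prop:V-as-adj} together with the adjoint formula \eqref{eq:V*}, this says $\langle\U\mu,G\rangle=\langle\U\nu,G\rangle$ for a rich enough class of test functions $G$ on $\S^{d-1}\times\II$, and the injectivity of $\U$ on measures — which is exactly what the SVD \eqref{eq:M-svd} with its nonvanishing singular values $\lambda_{n,d}^{\U}$ delivers, since equality of all spherical-harmonic$\times$Legendre moments forces $\mu=\nu$ — yields $\mu=\nu$.

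The heart of the matter is the two-sided bound \eqref{eq:W-SW-equiv}. For the left inequality $c_{d,p}\PSW_p(\mu,\nu)\le\WS_p(\mu,\nu)$, the standard argument is that $\cS_\bpsi$ is $1$-Lipschitz from $(\S^{d-1},d)$ to $(\II,|\cdot|)$ — indeed $|\inn{\bxi,\bpsi}-\inn{\bxi',\bpsi}|\le\|\bxi-\bxi'\|\le d(\bxi,\bxi')$ — so pushing an optimal plan for $\WS_p(\mu,\nu)$ forward under $(\cS_\bpsi,\cS_\bpsi)$ gives a competitor for $\WS_p(\U_\bpsi\mu,\U_\bpsi\nu)$, whence $\WS_p^p(\U_\bpsi\mu,\U_\bpsi\nu)\le\WS_p^p(\mu,\nu)$ pointwise in $\bpsi$; integrating gives the bound with $c_{d,p}=1$. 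The right inequality — a Hölder-type reverse estimate controlling $\WS_p$ by a fractional power of $\PSW_p$ — is the real obstacle and the place where the exponent $\frac{1}{p(d+1)}$ enters. The natural route is the one used for sliced Wasserstein on $\R^d$ and adapted to the sphere in \cite{QueBeiSte23,Bon22}: bound $\WS_p$ above by (a constant times) a negative-order Sobolev or bounded-Lipschitz-type norm of $\mu-\nu$ on $\S^{d-1}$, then bound that dual norm in terms of the slice data. Concretely, one expands $\mu-\nu$ in spherical harmonics, uses \eqref{eq:M-svd} to express the coefficients $\widehat{(\mu-\nu)}_{n,d}^k$ through the coefficients of $\U_\bpsi(\mu-\nu)$, controls the latter by $\WS_1(\U_\bpsi\mu,\U_\bpsi\nu)$ via the Kantorovich–Rubinstein duality on the interval (since Legendre polynomials are Lipschitz with explicitly growing constants), and interpolates: a finite band $n\le N$ is controlled by the slice distances while the tail $n>N$ is controlled by total variation $\le 2$, using the polynomial growth $N_{n,d}=\mathcal O(n^{d-2})$ from \eqref{eq:Nnd-bound} and $\WS_p\le\WS_1^{1/p}\cdot(\mathrm{diam})^{1-1/p}$; optimizing over $N$ produces the stated fractional exponent. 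Finally, the topological equivalence is a formal consequence of \eqref{eq:W-SW-equiv}: each inequality shows one metric's convergence implies the other's, so they induce the same topology on $\P(\S^{d-1})$, which is compact and metrizable by $\WS_p$ since $\S^{d-1}$ is compact.
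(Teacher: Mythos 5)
Your treatment of the rotational invariance, the metric axioms, and the left-hand inequality of \eqref{eq:W-SW-equiv} is sound: the change of variables $\bpsi\mapsto\bQ\bpsi$, Minkowski's inequality, and the observation that $\cS_\bpsi$ is $1$-Lipschitz from $(\S^{d-1},d)$ to $\II$ (so that pushing forward an optimal plan gives $\WS_p(\U_\bpsi\mu,\U_\bpsi\nu)\le \WS_p(\mu,\nu)$) all work, and your definiteness argument via moments can be made rigorous with Funk--Hecke and density of polynomials (the paper instead gets definiteness for free from \eqref{eq:W-SW-equiv}). The genuine gap is the right-hand inequality, which is the crux of the theorem: you only outline a programme (expand $\mu-\nu$ in spherical harmonics, invert the slice transform via \eqref{eq:M-svd}, control slice coefficients by Kantorovich--Rubinstein, truncate at degree $N$, bound the tail by total variation, ``optimize over $N$'') and then assert that this produces exactly the exponent $\frac{1}{p(d+1)}$. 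None of the quantitative steps is carried out: passing from bounds on harmonic coefficients of $\mu-\nu$ to a bound on $\WS_p(\mu,\nu)$ itself requires controlling the harmonic expansion of Lipschitz test functions (or a comparison of $\WS_p$ with a negative-order Sobolev norm), the inversion amplifies degree-$n$ coefficients by $1/\lambda_{n,d}^{\U}\sim n^{(d-2)/2}$ by \autoref{prop:V-inj}, and it is by no means clear that balancing this against the tail reproduces the exponent $\frac{1}{p(d+1)}$ rather than some other $d$- and $p$-dependent power. Your appeal to precedent is also off: for the semicircular slicing no equivalence of the type \eqref{eq:W-SW-equiv} is known (cf.\ \autoref{rem:SSW}), so there is no spherical template to ``adapt''.

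The paper avoids this entirely by a reduction to the Euclidean case. One extends $\mu,\nu$ to $\tilde\mu,\tilde\nu\in\P(\R^d)$ supported on the sphere and observes that the parallel slicing of spherical measures is literally the Euclidean slicing of the extensions, i.e.\ $\PSW_p(\mu,\nu)=\RSW_p(\tilde\mu,\tilde\nu)$ with $\RSW$ from \eqref{eq:RSW}; then the elementary comparison $\norm{\bxi-\zb\eta}\le d(\bxi,\zb\eta)\le\frac{\pi}{2}\norm{\bxi-\zb\eta}$ (\autoref{lem:dist-bound}) gives $\WS_p(\tilde\mu,\tilde\nu)\le\WS_p(\mu,\nu)\le\frac{\pi}{2}\WS_p(\tilde\mu,\tilde\nu)$ (\autoref{thm:W-Euclidean}), and the two-sided bound \eqref{eq:W-SW-equiv}, including the exponent $\frac{1}{p(d+1)}$, follows by citing Bonnotte's theorem for compactly supported measures on $\R^d$. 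To complete your proof you must either carry out your harmonic-analysis estimate in full and verify that it yields the stated exponent, or adopt this reduction; as written, the central inequality remains unproven.
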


The proof is given in \autoref{sec:proof_SSW}.
Even though the Wasserstein $\WS_p$ and sliced Wasserstein metric $\PSW_p$ on $\P(\S^{d-1})$ are topologically equivalent,
they are not bilipschitz equivalent.
As $\S^{d-1}$ is a compact set, the $p$-Wasserstein metrics for all $p\in[1,\infty)$ induce the same topology on $\M(\S^{d-1})$, see \cite[§\,5.2]{San15}.
We conclude this section by mentioning another slicing approach.

\begin{remark}[Semicircular slices] \label{rem:SSW}
A different approach due to \cite{Bon22,Bon23}
uses slicing along semicircles
and maps to the one-dimensional torus $\T\coloneqq \R/(2\pi)$.
We mention here the case $d=3$ following \cite{QueBeiSte23}.
We denote the components of the inverse of the bijective spherical coordinate transform \eqref{eq:sph} by
$
\sph^{-1}(\bxi)
=
(\azi(\bxi), \zen(\bxi)),
$
{where $\eul$ denotes the transformation in Euler angles $\eqref{eq:Q}$.}
For $\bpsi = \Phi(\varphi,\theta)\in\S^2$, we define the slicing operator
$$
\cA_\bpsi
\colon \S^2\to \T,\ 
\bxi\mapsto \azi(\eul(\varphi,\theta,0)^\top \bxi).
$$
The semicircular sliced Wasserstein distance
\begin{equation}
    \SSW_p^p(\mu,\nu)
    \coloneqq
    \int_{\S^2}
    \WS_p^p((\cA_{\bpsi})_\# \mu, (\cA_{\bpsi})_\# \nu) \d u_{\S^2}(\bpsi)
\end{equation}
is also a rotationally invariant metric, see \cite{QueBeiSte23},
but an equivalence as in \eqref{eq:W-SW-equiv} is not known.
Furthermore, it requires solving the one-dimensional OT on the torus, which is more difficult than on an interval, see \cite{DelSalSob10,RabDelGou11,RabPeyDelBer12}.\qed
\end{remark}

\section{Sliced OT on SO(3)}\label{sec:SO}

We present an approach to generate sliced Wasserstein distances on $\X=\SO$.
We denote the angle of the rotation $\bQ\in\SO$ by
$$
\angle(\bQ)
\coloneqq
\arccos\frac{-1+\operatorname{trace}(\bQ)}{2}
\in[0,\pi].
$$
We take $\D = \SO$ and the slicing operator 
\begin{equation} \label{eq:SO_dist}
  d_\bQ\colon \SO\to [0,\pi],\quad
  \bP\mapsto
  \angle(\bQ^\top \bP)
  ,\qquad \forall\bQ\in\SO.
\end{equation}
The respective slice $d_\bQ^{-1}(\omega)$ can be parameterized as follows.

\begin{proposition} [Parameterization] \label{lem:SO_sphere}
  Let $\bQ\in\SO$ and $\omega\in [0,\pi]$.
  Then
  \begin{equation} \label{eq:SO_sphere}
    d_\bQ^{-1}(\omega)
    =
    \left\{\bA \in \SO\mid d_{\bQ}(\bA) = \omega\right\}
    =
    \{ \bQ \rR_{\bxi}(\omega) \mid \bxi\in\S^2\},
  \end{equation}
  where $\rR_{\bxi}(\omega)$ is the rotation with axis $\bxi$ and angle $\omega$, see \eqref{eq:axan}.
\end{proposition}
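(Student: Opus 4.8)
The plan is to split the proof into the two asserted equalities and to handle the second one, which is the substantive part, via Euler's rotation theorem together with a short trace computation. For the first equality I would recall that the metric $d$ on $\SO$ is the geodesic distance of the bi-invariant Riemannian metric, so that $d$ is left-invariant with $d(\bP,\Id) = \angle(\bP)$; consequently $d(\bA,\bQ) = d(\Id,\bQ^\top\bA) = \angle(\bQ^\top\bA) = d_\bQ(\bA)$ by \eqref{eq:SO_dist}, and $d_\bQ^{-1}(\omega) = \{\bA\in\SO \mid d(\bA,\bQ)=\omega\}$ is immediate. Since $\SO$ is a group, the map $\bA\mapsto \bP\coloneqq\bQ^\top\bA$ is a bijection of $\SO$ with inverse $\bP\mapsto\bQ\bP$, so the remaining identity reduces to showing
\begin{equation*}
  \{\bP\in\SO \mid \angle(\bP) = \omega\} = \{\rR_\bxi(\omega) \mid \bxi\in\S^2\}.
\end{equation*}

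For the inclusion "$\supseteq$" I would compute the trace from the axis--angle formula \eqref{eq:axan}: since $\trace(\bn\bn^\top) = \norm{\bn}^2 = 1$, we get $\trace(\rR_\bn(\omega)) = (1-\cos\omega) + 3\cos\omega = 1 + 2\cos\omega$ for every $\bn\in\S^2$, hence
\begin{equation*}
  \angle(\rR_\bn(\omega)) = \arccos\tfrac{-1+\trace(\rR_\bn(\omega))}{2} = \arccos(\cos\omega) = \omega,
\end{equation*}
using $\omega\in[0,\pi]$. Thus every $\rR_\bxi(\omega)$, $\bxi\in\S^2$, has rotation angle $\omega$. For "$\subseteq$", given $\bP\in\SO$ with $\angle(\bP)=\omega$, I would invoke Euler's rotation theorem (the surjectivity of the axis--angle parameterization already recalled before \eqref{eq:axan}) to write $\bP = \rR_\bn(\theta)$ for some $\bn\in\S^2$ and, after possibly replacing $(\bn,\theta)$ by $(-\bn,-\theta)$ and shifting $\theta$ by a multiple of $2\pi$ (neither of which changes $\bP$), with $\theta\in[0,\pi]$. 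The trace computation above then forces $\theta = \angle(\bP) = \omega$, so $\bP = \rR_\bn(\omega)$ with $\bn\in\S^2$. Translating back through $\bA = \bQ\bP$ yields the claim.

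I do not expect a genuine obstacle here. The only two points requiring a little care are the normalization of the rotation angle into $[0,\pi]$ when applying Euler's theorem — which simultaneously absorbs the degenerate cases $\omega\in\{0,\pi\}$, where the axis fails to be unique but the set equality is unaffected — and reading the elementary identity $\trace(\rR_\bn(\omega)) = 1+2\cos\omega$ off \eqref{eq:axan} correctly. Everything else is bookkeeping.
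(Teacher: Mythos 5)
Your proof is correct and follows essentially the same route as the paper's: reduce to $\bP = \bQ^\top\bA$, use the surjectivity of the axis--angle parameterization, and verify $\angle(\rR_\bn(\omega))=\omega$ to match the angle. You merely spell out the trace computation $\trace(\rR_\bn(\omega))=1+2\cos\omega$ and the normalization of $\theta$ into $[0,\pi]$, which the paper leaves implicit.
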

\begin{proof}
  We have 
  $
  \angle(\rR_{\zb\eta}(\omega)) = \omega
  $
  for all $\zb\eta\in\S^2$ and $\omega\in [0,\pi]$.
  Let $\bA\in \SO$.
  We write $\bQ^\top \bA$ in axis--angle form \eqref{eq:axan} as $\bQ^\top \bA = \rR_{\bxi}(\sigma)$ with $\sigma=\angle(\bQ^\top \bA)$ and some $\bxi\in\S^2$.
  Then we have $\bA\in d_\bQ^{-1}(\omega)$ if and only if
  $
  \omega
  =
  \angle(\bQ^\top \bA)
  =
  \sigma,
  $
  which shows the claim.
\end{proof}

\subsection{A Two-Dimensional Radon Transform on SO(3)}
Let $f\in L^1(\SO)$.
We define the \emph{Radon transform} on the rotation group for any $\bQ\in\SO$ and $\omega\in[0,\pi]$ by
\begin{equation} \label{eq:T}
    \cT f(\bQ,\omega)
    \coloneqq
    \frac{1}{4\pi^2}(1-\cos(\omega))\,
    \int_{\S^2} 
    f(\bQ \rR_{\bxi}(\omega)) 
    \d\sigma_{\S^2}(\bxi),
\end{equation}
and its restriction 
$\cT_\bQ f(\omega)
\coloneqq 8\pi \cT(\bQ,\omega)$.
By \autoref{lem:SO_sphere}, the domain of integration of $f$ is the slice $d_\bQ^{-1}(\omega)$.
With \eqref{eq:axan-int}, we obtain 
\begin{equation}
  \int_{\SO} f(\bA) \d\sigma_{\SO}(\bA)
  = 
  \int_{0}^{\pi}
  \cT_\bQ f(\omega)
  \d\omega
  ,\qquad \forall f\in L^1(\SO),
\end{equation}
which serves as analogue to \eqref{eq:V_int}.
By Fubini's theorem, the last equation implies that
$\cT_\bQ f \in L^1([0,\pi])$ and 
$\cT f \in L^1(\SO\times [0,\pi])$
if $f\in L^1(\SO)$.

\begin{theorem}[Singular value decomposition] \label{thm:svd}
  The Radon transform \eqref{eq:T} is one-to-one
  $$
  \cT \colon
  L^2(\SO) \to 
  L^2(\SO\times[0,\pi])
  $$
  and has
  the singular value decomposition
  \begin{equation} \label{eq:svd}
    \cT \widetilde D_n^{j,k}
    =
    \lambda_n^{\cT}\,
    F_n^{j,k}
    ,\qquad \forall n\in\NN,\, j,k\in\{-n,\dots,n\}
  \end{equation}
  with the orthonormal basis $\widetilde D_n^{j,k}$ of $L^2(\SO)$, see \eqref{eq:Dn},
  the singular values
  \begin{equation} 
    \lambda_0^{\cT}
    =
    \sqrt{\frac32}\, \pi^{-1/2}
    \quad\text{and}\quad
    \lambda_n^{\cT}
    \coloneqq
    \frac{1}{(2n+1)\sqrt{\pi}}
    \qquad \forall n\in\N,
  \end{equation}
  and the set of orthonormal functions on $L^2(\SO\times[0,\pi])$ defined by
  $$
  F_n^{j,k}(\bQ,\omega)
  \coloneqq
  \begin{cases}
    \tfrac2{\sqrt\pi}\, \widetilde D_n^{j,k}(\bQ)\, \sin\!\left((n+\tfrac12)\omega\right)\, \sin(\tfrac\omega2),
    & n\neq0,
    \\
    \sqrt{\tfrac{8}{3\pi}}\, \widetilde D_0^{0,0}(\bQ) \left( \sin(\tfrac\omega2) \right)^2, & n=0,
  \end{cases}
  \qquad (\bQ,\omega)\in\SO\times[0,\pi].
  $$
\end{theorem}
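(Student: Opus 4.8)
The plan is to compute $\cT \widetilde D_n^{j,k}$ directly from the defining integral \eqref{eq:T} and identify the result with a scalar multiple of the claimed function $F_n^{j,k}$; the orthonormality of the $F_n^{j,k}$ together with the fact that they span a dense subspace then upgrades this to an actual singular value decomposition, and injectivity follows because all singular values are nonzero. First I would exploit the structure $\bQ \mapsto \bQ \rR_{\bxi}(\omega)$: since the Wigner D-functions transform under left multiplication according to the representation property $D_n^{j,k}(\bQ\bP) = \sum_{m=-n}^{n} D_n^{j,m}(\bQ)\, D_n^{m,k}(\bP)$, I can pull $\widetilde D_n^{j,k}(\bQ \rR_{\bxi}(\omega))$ apart as $\sum_m \widetilde D_n^{j,m}(\bQ)\, D_n^{m,k}(\rR_{\bxi}(\omega))$ and move the $\bQ$-dependent factors outside the $\S^2$-integral. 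This reduces everything to evaluating $\int_{\S^2} D_n^{m,k}(\rR_{\bxi}(\omega))\, \d\sigma_{\S^2}(\bxi)$.

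The key step is this $\S^2$-average. The rotation $\rR_{\bxi}(\omega)$ has fixed angle $\omega$ and axis $\bxi$ ranging uniformly over the sphere; averaging a matrix coefficient of the representation $D_n$ over all rotations of a fixed conjugacy class is a standard character-type computation. Concretely, the average $\frac{1}{4\pi}\int_{\S^2} D_n(\rR_{\bxi}(\omega))\,\d\sigma_{\S^2}(\bxi)$ must be a scalar multiple of the identity on the $(2n+1)$-dimensional representation space (by Schur's lemma, since it commutes with the action of $\SO$ under conjugation), and the scalar is the normalized character $\frac{\chi_n(\omega)}{2n+1}$, where $\chi_n(\omega) = \frac{\sin((n+\frac12)\omega)}{\sin(\frac\omega2)}$ is the character of the irreducible $\SO$-representation of dimension $2n+1$. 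Hence $\int_{\S^2} D_n^{m,k}(\rR_{\bxi}(\omega))\,\d\sigma_{\S^2}(\bxi) = \frac{4\pi}{2n+1}\,\frac{\sin((n+\frac12)\omega)}{\sin(\frac\omega2)}\,\delta_{m,k}$. Substituting back, the sum over $m$ collapses and $\cT \widetilde D_n^{j,k}(\bQ,\omega)$ becomes $\widetilde D_n^{j,k}(\bQ)$ times $\frac{1}{4\pi^2}(1-\cos\omega)\cdot\frac{4\pi}{2n+1}\cdot\frac{\sin((n+\frac12)\omega)}{\sin(\frac\omega2)}$; using $1-\cos\omega = 2\sin^2(\frac\omega2)$ this simplifies to a constant times $\widetilde D_n^{j,k}(\bQ)\,\sin((n+\frac12)\omega)\,\sin(\frac\omega2)$, matching $F_n^{j,k}$ up to a normalization constant that I would then reconcile with the stated $\lambda_n^{\cT}$. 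The $n=0$ case is special because $\sin((n+\frac12)\omega)\sin(\frac\omega2) = \sin^2(\frac\omega2)$ there, and the normalizing constant differs; I would handle it separately, which explains the two branches in the definition of $F_n^{j,k}$ and the anomalous value $\lambda_0^{\cT} = \sqrt{3/2}\,\pi^{-1/2}$.

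To finish, I would verify that $\{F_n^{j,k}\}$ is an orthonormal system in $L^2(\SO\times[0,\pi])$: orthogonality and normalization in the $\bQ$-variable come from \eqref{eq:D_ortho}, and in the $\omega$-variable one needs $\int_0^\pi \sin^2((n+\frac12)\omega)\sin^2(\frac\omega2)\,\d\omega$ to produce the right constant, which is an elementary trigonometric integral (for $n=0$ one instead needs $\int_0^\pi \sin^4(\frac\omega2)\,\d\omega$). Since the $\widetilde D_n^{j,k}$ form an orthonormal basis of $L^2(\SO)$ and the functions $\sin((n+\frac12)\omega)$, $n\ge0$, form (a rescaling of) an orthogonal basis of $L^2([0,\pi])$ — equivalently the $d_n^{j,k}$ span via \eqref{eq:d_ortho} — the map sends a complete orthonormal system to an orthonormal system with nonzero scaling factors, so \eqref{eq:svd} is genuinely a singular value decomposition and $\cT$ is injective. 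The main obstacle I anticipate is pinning down the exact normalizing constants — tracking the factors $\frac{1}{4\pi^2}$, the $8\pi$ in the definition, the $\frac{2n+1}{8\pi^2}$ in $\widetilde D_n^{j,k}$, and the value of the $\omega$-integral — so that the singular values come out exactly as $\frac{1}{(2n+1)\sqrt\pi}$ for $n\ge1$ and $\sqrt{3/2}\,\pi^{-1/2}$ for $n=0$; the conceptual content (Schur's lemma / character average) is routine, but the bookkeeping is where errors hide.
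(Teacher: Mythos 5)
Your approach is correct and arrives at exactly the same key identity as the paper, but via a genuinely different route at the crucial step. After the common first move (using the representation property $D_n^{j,k}(\bQ\bP) = \sum_m D_n^{j,m}(\bQ)\,D_n^{m,k}(\bP)$ to pull the $\bQ$-dependence outside the $\S^2$-integral), the paper evaluates $\int_{\S^2} D_n^{m,k}(\rR_{\bxi}(\omega))\,\d\sigma_{\S^2}(\bxi)$ by a fully explicit Euler-angle computation: writing $\rR_{\sph(\varphi,\vartheta)}(\omega) = \eul(\varphi,\vartheta,0)\,\eul(\omega,-\vartheta,-\varphi)$, applying the product formula a second time, and then invoking the orthogonality of the complex exponentials and of the Wigner $d$-functions \eqref{eq:d_ortho} to reduce the double sum to the Dirichlet kernel $\sum_{\ell=-n}^{n}\e^{-\i\ell\omega}=\sin((n+\tfrac12)\omega)/\sin(\tfrac\omega2)$. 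You instead recognize the $\S^2$-average as a class average: since $\rR_{\bP\bxi}(\omega)=\bP\rR_{\bxi}(\omega)\bP^\top$ and $\sigma_{\S^2}$ is rotation-invariant, the operator $\int_{\S^2}D_n(\rR_{\bxi}(\omega))\,\d\sigma_{\S^2}(\bxi)$ commutes with the irreducible representation $D_n$, hence is a scalar multiple of the identity by Schur's lemma, and taking the trace identifies the scalar as $\chi_n(\omega)/(2n+1)$. This is shorter, and it explains \emph{why} the answer is proportional to $\delta_{m,k}$ and to the character, whereas the paper's explicit computation discovers this only at the end. The price is that you need the cleanly stated conjugation identity and Schur's lemma; the paper's computation is self-contained modulo the stated orthogonality relations.

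Your outline of the remaining bookkeeping is sound. Two small points worth being precise about when you write it up: (i) the functions $\sin((n+\tfrac12)\omega)\sin(\tfrac\omega2)$ are \emph{not} mutually orthogonal in $L^2([0,\pi])$ for different $n$, but this is irrelevant — the orthogonality of $\{F_n^{j,k}\}$ already follows entirely from \eqref{eq:D_ortho} in the $\bQ$-variable, and the $\omega$-integral is only needed for the normalization within a fixed $n$ (giving $\pi/4$ for $n\ge1$ and $3\pi/8$ for $n=0$); (ii) to conclude \eqref{eq:svd} is an SVD rather than merely a diagonalization, one should observe that the singular values $\lambda_n^{\cT}\sim (2n+1)^{-1}\to 0$, so $\cT$ is compact, and since all $\lambda_n^{\cT}>0$ the operator is injective.
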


The proof is postponed to \autoref{sec:proof_T}.
Restricting $\cT(\cdot,\omega)$ to a fixed radius $\omega$, we obtain the following injectivity result, which is of a similar structure as for the spherical cap \cite{Ung54} or spherical slice transform \cite{Sch69} on $\S^2$.

\begin{corollary}[Injectivity for fixed $\omega$]
  For fixed $\omega\in(0,\pi)$,
  the Radon transform $\cT(\cdot,\omega)$
  is injective as operator $L^2(\SO)\to\ L^2(\SO)$ if and only if
  $ \omega / \pi = p/q$ 
  where $p/q$ is a reduced fraction
  and $p\in\N$ is even and $q\in\N$ is odd.
\end{corollary}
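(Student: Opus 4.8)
The plan is to exploit that, for a fixed $\omega$, the singular value decomposition of \autoref{thm:svd} already diagonalizes $\cT(\cdot,\omega)$. Since $F_n^{j,k}(\bQ,\omega)$ depends on $\bQ$ only through the factor $\widetilde D_n^{j,k}(\bQ)$, the identity \eqref{eq:svd} specializes, for fixed $\omega$, to $\cT(\cdot,\omega)\,\widetilde D_n^{j,k}=\mu_n(\omega)\,\widetilde D_n^{j,k}$, where a short computation of $\lambda_n^{\cT}$ times the $\bQ$-independent factor of $F_n^{j,k}$ gives $\mu_0(\omega)=\tfrac2\pi\sin^2(\tfrac\omega2)$ and $\mu_n(\omega)=\tfrac{2}{(2n+1)\pi}\,\sin\!\bigl((n+\tfrac12)\omega\bigr)\,\sin(\tfrac\omega2)$ for $n\ge1$. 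As $\{\widetilde D_n^{j,k}\}$ is a complete orthonormal system in $L^2(\SO)$ by \eqref{eq:Dn}, the operator $\cT(\cdot,\omega)$ is diagonal with respect to it, so its kernel is the closed linear span of those eigenspaces $\mathcal{V}_n=\operatorname{span}\{\widetilde D_n^{j,k}:j,k=-n,\dots,n\}$ with $\mu_n(\omega)=0$. Hence $\cT(\cdot,\omega)$ is injective if and only if $\mu_n(\omega)\ne0$ for all $n\in\NN$.

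First I would discard the trivial factors. For $\omega\in(0,\pi)$ we have $\sin(\tfrac\omega2)\in(0,1)$, so $\mu_0(\omega)>0$, while for $n\ge1$ the value $\mu_n(\omega)$ vanishes precisely when $\sin\!\bigl((n+\tfrac12)\omega\bigr)=0$, i.e.\ $(2n+1)\omega\in2\pi\Z$, i.e.\ $\tfrac\omega\pi=\tfrac{2m}{2n+1}$ for some $m\in\Z$; since $\tfrac\omega\pi\in(0,1)$ this forces $1\le m\le n$. Consequently $\cT(\cdot,\omega)$ is injective if and only if $\tfrac\omega\pi\notin S$, where $S\coloneqq\bigl\{\,\tfrac{2m}{2n+1}:n\in\N,\ 1\le m\le n\,\bigr\}$; in particular $\cT(\cdot,\omega)$ is injective whenever $\tfrac\omega\pi$ is irrational.

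It then remains to identify $S$ arithmetically as the set $T$ of reduced fractions $p/q\in(0,1)$ with $p$ even and $q$ odd. For $S\subseteq T$: an element $\tfrac{2m}{2n+1}\in S$ lies in $(0,1)$ because $1\le m\le n$, and since $2n+1$ is odd the number $g\coloneqq\gcd(2m,2n+1)=\gcd(m,2n+1)$ is odd, so in lowest terms the numerator $2m/g=2(m/g)$ is even and the denominator $(2n+1)/g$ is odd. For $T\subseteq S$: if $p/q\in T$ then $p\ge2$, and since $p/q<1$ is reduced with $q$ odd we get $q\ge3$; putting $n\coloneqq(q-1)/2\ge1$ and $m\coloneqq p/2\ge1$ gives $\tfrac{2m}{2n+1}=\tfrac pq$ with $m\le n$ (because $p<q$), so $\tfrac pq\in S$. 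Thus $S=T$, and together with the previous paragraph this gives the assertion: $\cT(\cdot,\omega)$ is injective if and only if $\tfrac\omega\pi$ is \emph{not} a reduced fraction $p/q$ with $p$ even and $q$ odd (in particular for every irrational $\tfrac\omega\pi$); equivalently, $\cT(\cdot,\omega)$ is not injective precisely when $\tfrac\omega\pi=p/q$ in lowest terms with $p$ even and $q$ odd.

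I expect the only genuinely nontrivial ingredient, beyond invoking \autoref{thm:svd}, to be the arithmetic identification $S=T$; the diagonalization and the ``no vanishing eigenvalue'' criterion are immediate. The minor pitfalls to keep track of are that the degree $n=(q-1)/2$ is a positive integer --- which needs $q\ge3$, guaranteed by $\tfrac\omega\pi\in(0,1)$ together with $q$ odd --- and that $S$ is closed under passing to lowest terms, so that the inclusion $S\subseteq T$ genuinely holds on the level of reduced fractions.
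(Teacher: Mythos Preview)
Your approach is essentially the same as the paper's: diagonalize $\cT(\cdot,\omega)$ via the rotational harmonics from \autoref{thm:svd}, reduce injectivity to the nonvanishing of $\sin((n+\tfrac12)\omega)$ for all $n\ge1$, and translate this into the arithmetic condition on $\omega/\pi$. You are slightly more careful than the paper in that you verify both inclusions $S\subseteq T$ and $T\subseteq S$, whereas the paper only spells out why a zero eigenvalue forces the reduced fraction to be even/odd and leaves the converse implicit. You also correctly observe that the condition as printed in the corollary characterizes \emph{non}-injectivity rather than injectivity; the paper's own proof agrees with your conclusion.
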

\begin{proof}
  As a direct consequence of \autoref{thm:svd} and the orthonormality of the rotational harmonics,
  the restricted transformation has the eigenvalue decomposition
  \begin{equation} \label{eq:TD}
    \cT \widetilde D_n^{j,k}(\bQ,\omega)
    =
    \frac{2}{(2n+1)\pi}\,
    {\sin((n+\tfrac12) \omega)}\, {\sin(\tfrac\omega2)}\,
    \widetilde D_n^{j,k}(\bQ)
    ,\qquad \forall \bQ\in\SO.
  \end{equation}
  It is injective if and only if all eigenvalues
  $
  \frac{2}{(2n+1)\pi}
  \sin((n+\tfrac12)\omega)
  \sin(\tfrac\omega2)
  $
  are non-zero.
  The $n$-th eigenvalue
  is zero if and only if
  $(n+1/2)\omega \in \pi\N$,
  i.e., there exists $k\in\N$ such that 
  $\omega/\pi = 2k / (2n+1)$,
  the quotient of an even and an odd integer.
  Since this fraction can only be reduced with an odd factor, 
  also the reduced fraction consists of an even divided by an odd integer.
\end{proof}

\begin{theorem}[Adjoint] \label{thm:adjoint}
  Let $\bQ\in\SO$.
  The adjoint of $\cT_{\bQ} \colon L^2(\SO) \to L^2([0,\pi])$ is given by
  $$
  \cT_{\bQ}^* g(\bA)
  =
  g(d_\bQ(\bA))
  ,\qquad \forall \bA\in\SO,
  $$
  and the adjoint of 
  $\cT \colon L^2(\SO) \to L^2(\SO\times[0,\pi])$
  is 
  $$
  \cT^*(\bA)
  =
  \frac{1}{8\pi^2}
  \int_{\SO} g(\bQ,d_\bQ(\bA))
  \d \sigma_{\SO}(\bQ),
  \qquad \forall \bA\in\SO.
  $$
\end{theorem}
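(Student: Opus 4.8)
The plan is to establish both formulas by a direct computation: unfold the definition of the Radon transform, move the test function inside with Fubini, and then collapse the slice integral via the coarea-type identity \eqref{eq:axan-int} together with the left-invariance of the Haar measure $\sigma_{\SO}$. Before starting I would pin down the ambient inner products, on which the stated constant depends: $L^2(\SO)$ carries $\sigma_{\SO}$ (total mass $8\pi^2$, the normalization under which the $\widetilde D_n^{j,k}$ of \eqref{eq:Dn} are orthonormal), $L^2([0,\pi])$ the Lebesgue measure, and $L^2(\SO\times[0,\pi])$ the product measure $\sigma_{\SO}\times\der\omega$. With these conventions $\cT$ is bounded (in fact compact) by \autoref{thm:svd}, while $\cT_\bQ$ is bounded by a single Cauchy--Schwarz estimate on the inner $\S^2$-integral followed by \eqref{eq:axan-int}; hence both adjoints are genuine bounded operators and it suffices to verify the proposed identities on continuous $f,g$.

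For $\cT_\bQ$ I would take $f\in L^2(\SO)$, $g\in L^2([0,\pi])$, write $\langle \cT_\bQ f,g\rangle_{L^2([0,\pi])}$ as an integral over $(\omega,\bxi)\in[0,\pi]\times\S^2$ with weight $(1-\cos\omega)$, and pull $\overline{g(\omega)}$ inside by Fubini. Since $\angle(\rR_\bxi(\omega))=\omega$ for $\omega\in[0,\pi]$ (as recorded in the proof of \autoref{lem:SO_sphere}), the integrand equals $(1-\cos\omega)\,h(\rR_\bxi(\omega))$ with $h(\bA)\coloneqq f(\bQ\bA)\,\overline{g(\angle(\bA))}$, so \eqref{eq:axan-int} applied to $h$ turns the double integral into a constant multiple of $\int_{\SO}h(\bA)\,\d\sigma_{\SO}(\bA)$; the normalization of $\cT_\bQ$ — chosen so that $\int_0^\pi\cT_\bQ f(\omega)\,\d\omega=\int_{\SO}f\,\d\sigma_{\SO}$ — makes that constant equal to $1$. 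Finally the left-invariant substitution $\bA\mapsto\bQ^\top\bA$, together with $\angle(\bQ^\top\bA)=d_\bQ(\bA)$, rewrites $\int_{\SO}f(\bQ\bA)\,\overline{g(\angle(\bA))}\,\d\sigma_{\SO}(\bA)$ as $\int_{\SO}f(\bA)\,\overline{g(d_\bQ(\bA))}\,\d\sigma_{\SO}(\bA)=\langle f,g\circ d_\bQ\rangle$, which is the claim $\cT_\bQ^*g=g\circ d_\bQ$.

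For $\cT$ itself the argument is the same with $\bQ$ carried along as a spectator: for each fixed $\bQ$ the above computation applied to $g(\bQ,\cdot)$ — now with the prefactor $\tfrac1{4\pi^2}$ from the definition \eqref{eq:T} and the additional $\tfrac12$ from \eqref{eq:axan-int} — gives $\int_0^\pi\cT f(\bQ,\omega)\,\overline{g(\bQ,\omega)}\,\d\omega=\tfrac1{8\pi^2}\int_{\SO}f(\bA)\,\overline{g(\bQ,d_\bQ(\bA))}\,\d\sigma_{\SO}(\bA)$. Integrating over $\bQ\in\SO$ and interchanging the order of integration (permissible since everything is integrable over the compact product) exhibits $\langle \cT f,g\rangle$ as $\int_{\SO}f(\bA)\,\overline{\tfrac1{8\pi^2}\int_{\SO}g(\bQ,d_\bQ(\bA))\,\d\sigma_{\SO}(\bQ)}\,\d\sigma_{\SO}(\bA)$, i.e.\ $\cT^*g(\bA)=\tfrac1{8\pi^2}\int_{\SO}g(\bQ,d_\bQ(\bA))\,\d\sigma_{\SO}(\bQ)$.

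I do not expect a genuine obstacle: once the inner products are fixed, the whole proof is bookkeeping with Fubini, \eqref{eq:axan-int}, and left-invariance. The only delicate point — and the only thing that can go wrong — is the tracking of multiplicative constants: one must check that the $\tfrac12$ from \eqref{eq:axan-int}, the $\tfrac1{4\pi^2}$ in the definition \eqref{eq:T}, and the use of $\sigma_{\SO}$ (rather than the probability measure $u_{\SO}$) as reference on $\SO$ combine to leave exactly $\tfrac1{8\pi^2}$ in the formula for $\cT^*$, whereas $\cT_\bQ$ picks up no constant because its normalization is chosen precisely to cancel these factors.
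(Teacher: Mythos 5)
Your computation is correct and mirrors the paper's proof: both unfold $\cT_\bQ f$, pass the test function inside, and collapse the slice integral via the coarea formula \eqref{eq:axan-int} together with the left-invariance of $\sigma_{\SO}$ (packaged in the paper as the single substitution $\bA = \bQ\rR_{\bxi}(\omega)$). Your care about the normalization of $\cT_\bQ$ is well-placed --- the paper writes $\cT_\bQ f \coloneqq 8\pi\,\cT f(\bQ,\cdot)$, which should read $8\pi^2$; you sidestep this by reading the constant off the stated identity $\int_0^\pi \cT_\bQ f\,\d\omega = \int_{\SO}f\,\d\sigma_{\SO}$, which is the robust choice.
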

\begin{proof}
  Let $f\in C(\SO)$, $g\in C([0,\pi])$, and $\bQ\in\SO$.
  We have with the substitution $\bA = \bQ \rR_{\bxi}(\omega)$ and \eqref{eq:axan-int}
  \begin{align*}
    \int_{\SO}
    f(\bA)
    g(d_\bQ(\bA))
    \d\sigma_{\SO} (\bA)
    &=
    2
    \int_{0}^{\pi}
    \int_{\S^2}
    f(\bQ\rR_{\bxi}(\omega))
    g(\omega)
    (1-\cos(\omega))
    \d\sigma_{\S^2} (\bxi) \d\omega 
    \\& 
    =
    \int_{0}^{\pi}
    \cT_\bQ f(\omega)
    g(\omega)
    \d\omega .
  \end{align*}
  The second claim follows analogously by considering $g\in C(\SO\times[0,\pi])$ and integration over $\bQ\in\SO$.
\end{proof}

\subsection{Slicing of Measures}
We generalize the definition \eqref{eq:T} to a measure $\mu\in\M(\SO)$ 
via the pushforward of the slicing operator \eqref{eq:SO_dist}, i.e.,
\begin{align}
\cT_\bQ\mu &\coloneqq (d_{\bQ})_\#\mu \in \P([0,\pi])
\quad\text{and} 
\\ \label{eq:Tmu}
  \cT\mu
  &\coloneqq
  T_\#(u_{\SO} \times\mu)
  \in\P(\SO\times[0,\pi]),
  \qquad \text{with} \quad
  T(\bQ,\bA) 
  =
  (\bQ, d_\bQ(\bA)).
\end{align}

\begin{proposition}[Connection with adjoint] \label{thm:T_adj_dual}
  Let $\bQ\in\SO$ and $\mu\in\M(\SO)$.
  Then
  \begin{align}
    \inn{\cT_\bQ \mu, g}
    &=
    \inn{\mu, \cT_\bQ^*g}
    ,\qquad \forall g\in C([0,\pi]),
    \\
    \inn{\cT \mu, g}
    &=
    \inn{\mu, \cT^*g}
    ,\qquad \forall g\in C(\SO\times[0,\pi]).
  \end{align}
\end{proposition}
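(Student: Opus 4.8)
The plan is to mirror the proof of \autoref{prop:V-as-adj}: unwind the push-forward definitions of $\cT_\bQ\mu$ and $\cT\mu$, and recognize the resulting integrals as pairings against the explicit adjoints of \autoref{thm:adjoint}.

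For the first identity I would start from $\cT_\bQ\mu=(d_\bQ)_\#\mu$ and apply the change-of-variables formula for push-forwards: for $g\in C([0,\pi])$,
\[
\inn{\cT_\bQ\mu,g}
=\int_{[0,\pi]} g(\omega)\d(d_\bQ)_\#\mu(\omega)
=\int_{\SO} g\bigl(d_\bQ(\bA)\bigr)\d\mu(\bA).
\]
Since $\cT_\bQ^*g(\bA)=g(d_\bQ(\bA))$ by \autoref{thm:adjoint}, the right-hand side is exactly $\inn{\mu,\cT_\bQ^*g}$. No regularity issue arises: $d_\bQ$ is continuous and $g$ is bounded, so the integrand is bounded and continuous, and $\mu$ is finite.

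For the second identity I would use \eqref{eq:Tmu}, that is $\cT\mu=T_\#(u_{\SO}\times\mu)$ with $T(\bQ,\bA)=(\bQ,d_\bQ(\bA))$, to get for $g\in C(\SO\times[0,\pi])$
\[
\inn{\cT\mu,g}
=\int_{\SO\times[0,\pi]} g\d T_\#(u_{\SO}\times\mu)
=\int_{\SO\times\SO} g\bigl(\bQ,d_\bQ(\bA)\bigr)\d(u_{\SO}\times\mu)(\bQ,\bA).
\]
Then I would invoke Fubini's theorem to pull $\d\mu(\bA)$ outside, use $u_{\SO}=(8\pi^2)^{-1}\sigma_{\SO}$, and recognize the inner integral as $\cT^*g$ from \autoref{thm:adjoint}:
\[
\inn{\cT\mu,g}
=\int_{\SO}\Bigl(\frac{1}{8\pi^2}\int_{\SO} g(\bQ,d_\bQ(\bA))\d\sigma_{\SO}(\bQ)\Bigr)\d\mu(\bA)
=\inn{\mu,\cT^*g}.
\]

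I do not expect a genuine obstacle; the argument is a routine change of variables. The only points that merit a sentence of justification are: (i) the applicability of Fubini to the signed product measure $u_{\SO}\times\mu$, which is immediate because $u_{\SO}$ is a probability measure, $\mu$ is finite, and $g$ is bounded and continuous on the compact set $\SO\times\SO$; (ii) that $\bA\mapsto\cT^*g(\bA)$ is measurable — indeed continuous, by dominated convergence — so that the final pairing $\inn{\mu,\cT^*g}$ is well defined; and (iii) that the pointwise formulas for $\cT_\bQ^*$ and $\cT^*$ in \autoref{thm:adjoint}, although derived there on $L^2$, are meaningful verbatim for bounded continuous $g$, which is all that is used here.
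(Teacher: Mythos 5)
Your proof is correct and is exactly the argument the paper has in mind: the paper relegates this to a one-line remark ("can be proven analogously to Propositions~\ref{prop:V-as-adj} and \ref{cor:abs1}"), and your write-up carries out precisely that analogous change-of-variables/Fubini computation, including the appropriate bookkeeping with $u_{\SO}=(8\pi^2)^{-1}\sigma_{\SO}$ and the observation that the $L^2$ adjoint formulas from \autoref{thm:adjoint} make sense verbatim for bounded continuous $g$.
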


\begin{proposition}[Absolutely continuous measures]
  Let $\mu\in\M(\SO)$ be absolutely continuous,
  i.e., there exists a density function $f\in L^1(\SO)$ such that 
  $\mu = f \sigma_{\SO}$.
  Then
  \begin{align*}
    \cT_\bQ \mu
    &=
    (\cT_\bQ f) \sigma_{[0,\pi]}, 
    \quad \text{and}
    \\
    \cT \mu
    &=
    (\cT f) \sigma_{\SO\times[0,\pi]}.
  \end{align*}
\end{proposition}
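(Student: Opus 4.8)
The plan is to prove that the pushforward-based transforms $\cT_\bQ$ and $\cT$ applied to an absolutely continuous measure $\mu = f\sigma_{\SO}$ coincide with the function-valued transforms $\cT_\bQ f$ and $\cT f$ times the appropriate reference measures. I would proceed exactly in parallel with the proof of \autoref{cor:abs1}, using \autoref{thm:T_adj_dual} (connection with adjoint) together with \autoref{thm:adjoint} (the explicit form of the adjoints) and the defining integral \eqref{eq:axan-int} for $\sigma_{\SO}$.

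First, for the restricted transform, I would fix $\bQ\in\SO$ and test against an arbitrary $g\in C([0,\pi])$. By \autoref{thm:T_adj_dual}, $\inn{\cT_\bQ\mu, g} = \inn{\mu, \cT_\bQ^* g} = \int_{\SO} f(\bA)\, g(d_\bQ(\bA))\, \d\sigma_{\SO}(\bA)$. Now I would invoke the substitution $\bA = \bQ\rR_{\bxi}(\omega)$ together with \eqref{eq:axan-int} exactly as in the proof of \autoref{thm:adjoint}, which turns this integral into $\int_0^\pi \cT_\bQ f(\omega)\, g(\omega)\,\d\omega = \inn{(\cT_\bQ f)\sigma_{[0,\pi]}, g}$. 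Since $g$ was arbitrary in $C([0,\pi])$, which is the predual of $\M([0,\pi])$, the two measures are equal. In particular this shows $\cT_\bQ\mu$ is absolutely continuous with density $\cT_\bQ f \in L^1([0,\pi])$ (integrability of the density was already noted right after the definition of $\cT$).

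Next, for the full transform, I would fix $g\in C(\SO\times[0,\pi])$ and compute $\inn{\cT\mu, g} = \inn{\mu, \cT^* g}$ using \autoref{thm:T_adj_dual} and the explicit adjoint from \autoref{thm:adjoint}: this equals $\int_{\SO} f(\bA)\, \tfrac{1}{8\pi^2}\int_{\SO} g(\bQ, d_\bQ(\bA))\,\d\sigma_{\SO}(\bQ)\,\d\sigma_{\SO}(\bA)$. Applying Fubini's theorem to exchange the order of integration and then, for each fixed $\bQ$, using the substitution $\bA = \bQ\rR_{\bxi}(\omega)$ with \eqref{eq:axan-int} as above, I would obtain $\tfrac{1}{8\pi^2}\int_{\SO}\int_0^\pi \cT_\bQ f(\omega)\, g(\bQ,\omega)\,\d\omega\,\d\sigma_{\SO}(\bQ)$. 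Recalling $\cT_\bQ f = 8\pi\,\cT f(\bQ,\cdot)$ (the normalization in the definition of $\cT_\bQ$) and $u_{\SO} = (8\pi^2)^{-1}\sigma_{\SO}$, this rewrites as $\int_{\SO\times[0,\pi]} \cT f(\bQ,\omega)\, g(\bQ,\omega)\,\d\sigma_{\SO\times[0,\pi]}(\bQ,\omega) = \inn{(\cT f)\sigma_{\SO\times[0,\pi]}, g}$, which gives the second identity since $g$ was arbitrary.

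The only mild obstacle is bookkeeping of constants: one must track the factor $1-\cos\omega$ and the $\tfrac{1}{4\pi^2}$ and $8\pi$ prefactors through the substitution so that they combine correctly with the normalizations of $u_{\SO}$ and $\sigma_{\SO\times[0,\pi]}$ — but this is exactly the computation already carried out in the proof of \autoref{thm:adjoint}, so no new difficulty arises. Justifying the use of Fubini's theorem is immediate since $f\in L^1(\SO)$, $g$ is bounded and continuous, and $\sigma_{\SO}$ is finite. Hence the proof is a routine adaptation of the $\S^{d-1}$ case; I would simply write: \textit{The claim follows exactly as in \autoref{cor:abs1}, using \autoref{thm:T_adj_dual}, the adjoint formulas of \autoref{thm:adjoint}, and \eqref{eq:axan-int}.}
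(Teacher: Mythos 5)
Your proof is correct and is exactly the argument the paper has in mind: the authors simply state that the two propositions "can be proven analogously to Propositions \ref{prop:V-as-adj} and \ref{cor:abs1}," and you have carried out precisely that analogy, testing against continuous $g$, applying the adjoint relation from \autoref{thm:T_adj_dual} and the explicit adjoint formulas of \autoref{thm:adjoint}, and reducing via \eqref{eq:axan-int} and Fubini. (A minor note on the constants you flagged: the paper's definition $\cT_\bQ f(\omega)\coloneqq 8\pi\,\cT f(\bQ,\omega)$ appears to contain a typo and should read $8\pi^2$ for consistency with \eqref{eq:axan-int}, $u_{\SO}=(8\pi^2)^{-1}\sigma_{\SO}$, and the arithmetic inside the proof of \autoref{thm:adjoint}; with that correction the bookkeeping in your second paragraph closes exactly.)
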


The last two propositions can be proven analogously to Propositions \ref{prop:V-as-adj} and \ref{cor:abs1}.

\begin{theorem}[Injectivity] \label{thm:T_inj}
  The Radon transform
  $\cT \colon \M(\SO)\to \M(\SO\times [0,\pi])$ 
  is injective. 
\end{theorem}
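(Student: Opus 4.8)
The plan is to reduce the injectivity of $\cT$ on measures to the injectivity statement of \autoref{thm:svd} (the SVD on $L^2(\SO)$), using the duality in \autoref{thm:T_adj_dual} together with a density/approximation argument. First I would suppose $\mu,\nu \in \M(\SO)$ with $\cT\mu = \cT\nu$; by linearity it suffices to show that $\cT\mu = 0$ implies $\mu = 0$. By \autoref{thm:T_adj_dual}, $\cT\mu = 0$ as a measure means $\inn{\mu, \cT^* g} = 0$ for all $g \in C(\SO\times[0,\pi])$. So the goal becomes: the range of $\cT^*$ acting on $C(\SO\times[0,\pi])$ is dense in $C(\SO)$ (in the supremum norm), since then $\inn{\mu, h} = 0$ for all $h$ in a dense subset of $C(\SO)$ forces $\mu = 0$ (as $C(\SO)$ is the predual of $\M(\SO)$).

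Next I would prove this density by a Stone--Weierstrass-type argument built on the SVD. From \autoref{thm:svd} and \autoref{thm:adjoint}, $\cT^* F_n^{j,k} = \lambda_n^{\cT}\, \widetilde D_n^{j,k}$, and since every $\lambda_n^{\cT} > 0$, the range of $\cT^*$ on trigonometric-type test functions $g$ of the form $F_n^{j,k}$ already contains every rotational harmonic $\widetilde D_n^{j,k}$. Finite linear combinations of these are exactly the trigonometric polynomials on $\SO$, which are dense in $C(\SO)$ by the Stone--Weierstrass theorem (they form a self-adjoint subalgebra separating points and containing constants). Hence $\cT^*(C(\SO\times[0,\pi]))$ contains a uniformly dense subspace of $C(\SO)$, which is all we need. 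I should be a little careful that the $F_n^{j,k}$ are genuinely continuous on $\SO\times[0,\pi]$ — they are, being products of the smooth $\widetilde D_n^{j,k}$ with $\sin((n+\tfrac12)\omega)\sin(\tfrac\omega2)$ — so they are legitimate test functions.

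The main obstacle is the functional-analytic bookkeeping connecting the three layers: the measure-valued transform $\cT$ on $\M(\SO)$, its adjoint $\cT^*$ on continuous functions, and the $L^2$-SVD. In particular one must justify that $\inn{\cT\mu, g} = \inn{\mu, \cT^*g}$ for $g = F_n^{j,k} \in C(\SO\times[0,\pi])$ — this is exactly \autoref{thm:T_adj_dual} — and that $\cT^* g$ computed via the integral formula in \autoref{thm:adjoint} coincides with the $L^2$-adjoint action $\cT^* F_n^{j,k} = \lambda_n^{\cT} \widetilde D_n^{j,k}$; the two formulas agree because the integral-kernel adjoint is the same operator regardless of whether one views it on $L^2$ or on $C(\SO)$. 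Once these identifications are in place, the argument closes quickly. An alternative, essentially equivalent route would be to push $\cT\mu$ through its own "rotational harmonic moments": defining $\widehat{(\cT\mu)}{}_n^{j,k} = \inn{\cT\mu, \overline{F_n^{j,k}}}$, the identity $\widehat{(\cT\mu)}{}_n^{j,k} = \lambda_n^{\cT}\, \widehat\mu{}_n^{j,k}$ shows all moments of $\mu$ vanish, hence $\mu = 0$; but this still requires the same density remark to conclude that vanishing of all rotational-harmonic moments of a finite measure forces it to be zero, so I would present the Stone--Weierstrass version as the cleanest.
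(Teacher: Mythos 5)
Your argument is correct and genuinely different from the paper's, and in fact more elementary. Both proofs reduce injectivity of $\cT$ on $\M(\SO)$, via \autoref{thm:T_adj_dual}, to density of $\cT^*\bigl(C(\SO\times[0,\pi])\bigr)$ in $C(\SO)$. From there the paper takes a harder road: it fixes $f$ in the Sobolev space $H^s(\SO)$ with $s>2$, forms $g=(\cT\cT^*)^{-1}\cT f$ through the singular-value expansion, and shows via Cauchy--Schwarz and the addition theorem that this series converges uniformly, so $g$ is continuous and $f=\cT^*g$; density of $H^s(\SO)$ in $C(\SO)$ then closes the argument. Your route instead observes that $\cT^*F_n^{j,k}=\lambda_n^{\cT}\,\widetilde D_n^{j,k}$ with $\lambda_n^{\cT}>0$ and $F_n^{j,k}\in C(\SO\times[0,\pi])$, so every rotational harmonic lies in the range of $\cT^*$, and the linear span of the $\widetilde D_n^{j,k}$ is dense in $C(\SO)$ by Stone--Weierstrass (equivalently, Peter--Weyl): it is a self-conjugate subalgebra containing the constants and separating points, the product of two Wigner $D$-functions being a finite sum of Wigner $D$-functions by Clebsch--Gordan. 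This avoids the Sobolev machinery entirely. The one small step you flag yourself is worth keeping: the SVD gives $\cT^*F_n^{j,k}=\lambda_n^{\cT}\widetilde D_n^{j,k}$ as an $L^2$ identity, and you should note that the integral formula of \autoref{thm:adjoint} makes $\cT^*F_n^{j,k}$ continuous, so the identity holds pointwise and the harmonics are genuinely attained by continuous test functions. With that remark spelled out, the proof is complete; what the paper's longer argument buys, beyond matching the structure of the analogous result in \cite{QueBeiSte23}, is the additional information that every $H^s(\SO)$ function with $s>2$ lies in $\cT^*(C(\SO\times[0,\pi]))$, which is a stronger statement than mere density but unnecessary for injectivity.
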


The proof, which uses the singular value decomposition, is given in \autoref{sec:proof_T}.
Although the slicing operator and therefore the transform $\cT$ could be easily generalized to $\SOd$, this does not apply for the singular value decomposition \eqref{eq:svd} as the harmonic analysis on $\SOd$ becomes much more evolved for $d>3$, cf.\ \cite[Chapter IX]{Vil68}.

\subsection{Sliced Wasserstein Distance on SO(3)}\label{sec:sosw}

Let $p\in[1,\infty)$.
We define the sliced Wasserstein distance between measures $\mu,\nu\in \P(\SO)$ by
\begin{equation} \label{eq:SW_SO}
  \SOSW_p^p(\mu,\nu)
  \coloneqq
  \int_{\SO} \WS_p^p\big(\cT_{\bQ} \mu, \cT_{\bQ} \nu\big) \d u_{\SO}(\bQ),
\end{equation}
which is the mean of Wasserstein distances on the interval $[0,\pi]$.

\begin{theorem}
  Let $p\in[1,\infty)$.
  The sliced Wasserstein distance \eqref{eq:SW_SO} is a metric on $\P(\SO)$ that is invariant to rotations, i.e., 
  for any $\bA\in\SO$ and $\mu,\nu\in\P(\SO)$, we have 
  $$
  \SOSW_p^p(\mu(\bA\cdot), \nu(\bA\cdot))
  =
  \SOSW_p^p(\mu, \nu).
  $$
\end{theorem}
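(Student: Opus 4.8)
The plan is to verify the metric axioms for $\SOSW_p$ on $\P(\SO)$ one by one, each time reducing to the corresponding property of the one-dimensional Wasserstein distance $\WS_p$ on $\P([0,\pi])$, and then to deduce rotational invariance from a change of variables exploiting the invariance of the uniform (Haar) measure $u_{\SO}$. I would begin by observing that the integrand $\bQ\mapsto\WS_p^p(\cT_{\bQ}\mu,\cT_{\bQ}\nu)$ is continuous, hence bounded and measurable: for fixed $g\in C([0,\pi])$ the map $\bQ\mapsto\inn{\cT_{\bQ}\mu,g}=\int_{\SO} g(\angle(\bQ^\top\bP))\d\mu(\bP)$ is continuous by Lebesgue's dominated convergence theorem, so $\bQ\mapsto\cT_{\bQ}\mu$ is weak-$*$ continuous; since $[0,\pi]$ is compact, $\WS_p$ metrizes the weak-$*$ topology on $\P([0,\pi])$ and is jointly continuous there, and moreover $\WS_p\le\operatorname{diam}([0,\pi])=\pi$, so $\SOSW_p(\mu,\nu)<\infty$ for all $\mu,\nu$. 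Non-negativity and symmetry of $\SOSW_p$ are then immediate from the same properties of $\WS_p$, and the triangle inequality follows by writing $\SOSW_p(\mu,\nu)=\norm{\WS_p(\cT_{\cdot}\mu,\cT_{\cdot}\nu)}_{L^p(\SO,u_{\SO})}$, applying the triangle inequality for $\WS_p$ pointwise in $\bQ$, and using Minkowski's inequality in $L^p(\SO,u_{\SO})$.

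It remains to prove $\SOSW_p(\mu,\nu)=0\iff\mu=\nu$. The implication "$\Leftarrow$" is trivial. For "$\Rightarrow$", if $\SOSW_p(\mu,\nu)=0$ then the non-negative integrand vanishes $u_{\SO}$-a.e., and since $\WS_p$ is a metric on $\P([0,\pi])$ there is a $u_{\SO}$-null set $N\subset\SO$ with $\cT_{\bQ}\mu=\cT_{\bQ}\nu$ for every $\bQ\in\SO\setminus N$. I would then invoke the identity $\inn{\cT\mu,g}=\int_{\SO}\inn{\cT_{\bQ}\mu,g(\bQ,\cdot)}\d u_{\SO}(\bQ)$ for $g\in C(\SO\times[0,\pi])$, which follows from $\cT\mu=T_{\#}(u_{\SO}\times\mu)$ via Fubini's theorem: the two integrands agree off $N$, so $\cT\mu=\cT\nu$, and then \autoref{thm:T_inj} yields $\mu=\nu$. (Alternatively, one may first upgrade "for $u_{\SO}$-a.e.\ $\bQ$" to "for all $\bQ$" using the weak-$*$ continuity of $\bQ\mapsto\cT_{\bQ}\mu$ noted above, and only then apply injectivity.)

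For rotational invariance, the key algebraic fact is the equivariance of the slicing operator $d_{\bQ}$ from \eqref{eq:SO_dist}: since $\bA^{-1}=\bA^\top$ for $\bA\in\SO$,
\begin{equation*}
  d_{\bQ}(\bA^{-1}\bP)=\angle(\bQ^\top\bA^\top\bP)=\angle\big((\bA\bQ)^\top\bP\big)=d_{\bA\bQ}(\bP).
\end{equation*}
Writing $\mu(\bA\cdot)$ as the pushforward of $\mu$ under $\bP\mapsto\bA^{-1}\bP$, this gives $\cT_{\bQ}(\mu(\bA\cdot))=\cT_{\bA\bQ}\mu$, and likewise for $\nu$, whence
\begin{equation*}
  \SOSW_p^p(\mu(\bA\cdot),\nu(\bA\cdot))=\int_{\SO}\WS_p^p(\cT_{\bA\bQ}\mu,\cT_{\bA\bQ}\nu)\d u_{\SO}(\bQ).
\end{equation*}
The substitution $\bP=\bA\bQ$ together with the left-invariance of $u_{\SO}$ on the compact group $\SO$ then returns $\SOSW_p^p(\mu,\nu)$.

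I expect the least routine point to be the identity-of-indiscernibles step, specifically the passage from the a.e.\ coincidence of the one-dimensional slices $\cT_{\bQ}\mu=\cT_{\bQ}\nu$ to the equality $\cT\mu=\cT\nu$ of the full two-dimensional Radon transforms, which is what makes \autoref{thm:T_inj} applicable; the remaining parts are a routine transfer of the metric and invariance properties of the one-dimensional Wasserstein distance.
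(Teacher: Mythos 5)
Your proposal is correct and follows essentially the same route as the paper: symmetry and the triangle inequality are inherited from $\WS_p$ on $[0,\pi]$ (the paper leaves the Minkowski step implicit), positive definiteness reduces to the injectivity of $\cT$ established in \autoref{thm:T_inj}, and rotational invariance is obtained from the equivariance $d_\bQ(\bA^\top\bP)=d_{\bA\bQ}(\bP)$ together with the invariance of the Haar measure $u_{\SO}$. You spell out two points the paper glosses over — the passage from $\cT_\bQ\mu=\cT_\bQ\nu$ for $u_{\SO}$-a.e.\ $\bQ$ to $\cT\mu=\cT\nu$, and the measurability/finiteness of the integrand — both of which are handled correctly.
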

\begin{proof}
  The positive definiteness is due to \autoref{thm:T_inj},
  while the symmetry and triangular inequality follow from the respective properties of the Wasserstein distance on $[0,\pi]$.
  By definition in \eqref{eq:SW_SO}, we have
  \begin{equation*} 
    \SOSW_p^p(\mu(\bA\cdot), \nu(\bA\cdot))
    =
    \int_{\SO} \WS_p^p\big( \mu\circ \bA\circ d_\bQ^{-1}, \nu\circ \bA\circ d_\bQ^{-1}\big) \d u_{\SO}(\bQ).
  \end{equation*}
  Let $\omega\in[0,\pi]$.
  We have 
  $\bB\in \bA\circ d_\bQ^{-1}(\omega)$
  if and only if $\omega = d_\bQ(\bA^\top \bB) = d_{\bA\bQ}(\bB)$,
  cf.\ \eqref{eq:SO_dist}.
  Hence
  \begin{equation*} 
    \SOSW_p^p(\mu(\bA\cdot), \nu(\bA\cdot))
    =
    \int_{\SO} \WS_p^p\big((d_{\bA\bQ})_\# \mu, (d_{\bA\bQ})_\# \nu\big) \d u_{\SO}(\bA\bQ)
    =
    \SOSW_p^p(\mu, \nu). \qedhere
  \end{equation*}
\end{proof}

In \autoref{sec:SO-S3}, we provide a relation between the sliced Wasserstein distances on $\SO$ and on $\S^3$.

\section{Barycenter algorithms} \label{sec:bary}

There exist two approaches to compute barycenters of measures using 1D Wasserstein distances along projected measures, namely sliced Wasserstein barycenters and Radon Wasserstein barycenters, cf.\ \cite{BonRabPeyPfi15}.
We adapt them to our slicing  in Section \ref{sec:bary-sliced} and \ref{sec:bary-radon}, respectively.
 
\subsection{Sliced Wasserstein barycenters}
\label{sec:bary-sliced}
For {sliced Wasserstein barycenters}, we replace in \eqref{eq:W-bary} the Wasserstein distance $\WS_2$ by its sliced counterpart.
In particular, with the general notion of slicing in \eqref{eq:SW-general},
the \emph{sliced Wasserstein barycenter}
of given measures $\mu_i\in\P(\X)$, $i\in\ii{M}$, and $\blambda\in\Delta_M$,
is defined by
\begin{equation} \label{eq:PSW-bary}
    \bary^{\SW}_{\X}(\mu_i, \lambda_m)_{i=1}^{M}
    \coloneqq
    \argmin_{\nu\in\P(\X)}
    \sum_{i=1}^{M}
    \lambda_i\, \SW_2^2(\nu,\mu_i).
\end{equation}

\begin{remark} \label{rem:antipo_diracs}
    Although the different slicing approaches often yield similar barycenters, as we will see in the numerics,
    they differ considerably in the extreme case of two antipodal point measures on the sphere $\S^2$.
    Denote by $\delta_{\bxi}$ the Dirac measure at $\bxi\in\S^2$.
    The Wasserstein barycenter $\bary_{\S^{2}}^{\WS}$ of $\delta_{\be^3}$ and $\delta_{-\be^3}$ with equal weights $\lambda_1=\lambda_2=1/2$ consists of the measures $\nu\in\P(\S^2)$ with support on the equator. 
    However, all measures in $\P(\S^2)$ are parallelly sliced Wasserstein barycenters $\bary_{\S^2}^{\PSW}$.
    For the semicircular slices of \autoref{rem:SSW},
    we can show that the uniform distribution on the equator is a candidate for the barycenter $\bary_{\S^{2}}^{\SSW}$, while $u_{\S^2}$ is not.
    The details are provided in \autoref{sec:particular}.\qed
\end{remark}

We consider two types of discretization to compute sliced Wasserstein barycenters. Free-support barycenters are based on a Lagrangian discretization: measures are represented by samples, and the minimization is carried out over the coordinates of those samples, see \cite{RabPeyDelBer12,BonRabPeyPfi15,PeyCut19}. Fixed-support barycenters are based on a Eulerian discretization: a fixed grid is considered for all the measures which are represented by the weights given to each grid point, and the minimization is carried out over those weights, cf.\ \cite{BLNS22,Bor22}.

\subsubsection{Free-support discretization}\label{sec:disc-free}

For ${X = (X_k)_{k=1}^{N} \in \X^N}$, 
we note $\mu_X \coloneqq \frac{1}{N}\sum_{k = 1}^N\delta_{X_k}$. 
We consider $M$ discrete measures $\mu_{Y^{(i)}} \in \P(\X)$ with $Y^{(i)} \in \X^N$ for all $i\in \ii{M}$. The aim is to compute a discrete barycenter of these measures, i.e.,
we want to minimize the functional
\begin{equation} \label{eq:SW-bary}
    \cE\colon \X^ N \to \R,\qquad X \mapsto \displaystyle\sum_{i \in \ii{M}} \lambda_i \SW_2^2(\mu_X, \mu_{Y^{(i)}}),
\end{equation}
which is not convex in general,
via a stochastic gradient descent,
as it has been applied in the Euclidean space $\X=\R^d$ in \cite[sect.\ 3.2]{RabPeyDelBer12} and \cite[sect.\ 10.4]{PeyCut19}.
Since $X$ is on a manifold $\X$, the gradient is in its tangent space $T_x\X$ at $x\in \X$.
By Whitney’s embedding theorem \cite[thm.\ 6.15]{Lee12},
we can assume $\X$ to be embedded in Euclidean space $\R^d$ for sufficiently large $d$.
If $f\colon D\to\R$ is differentiable on an open set $D\subset\R^d$ with $D\supset \X$,
then the Riemannian gradient of the restriction of $f$ to $\X$ is given by $\nabla f(x) = \proj_{T_x\X} (\nabla_{\R^d} f(x))$,
where $\nabla_{\R^d}$ denotes the gradient in Euclidean space and
$\proj_{T_x\X}$ the orthogonal projection to the tangent space, see \cite[sect.\ 3.6.1]{ABS2008}.
The gradient of the functional \eqref{eq:SW-bary} can be expressed as follows.
    
\begin{theorem}
    Let $\X$ be a smooth submanifold of $\R^d$
    and $X, Y\in\X^N$ consist of pairwise distinct points $X_k$, $k\in \ii{N}$. 
    Assume that the slicing operator $\cS_\bpsi\colon \X\to\R$ is differentiable for all $\bpsi \in \D$,
    that $(\bx, \bpsi) \mapsto \cS_\bpsi(\bx)$ is bounded on $\X\times \D$, and that $\bpsi \mapsto \nabla \cS_\bpsi(\bx)$ is integrable on $\D$ uniformly for every $\bx\in\X$. 
    Furthermore, assume that $\cS_\bpsi(\bx_1)\neq\cS_\bpsi(\bx_2)$ for $u_\D$-almost every $\bpsi\in\D$ if $\bx_1\neq \bx_2$.
    Then the gradient of the sliced Wasserstein distance between the two measures $\mu_X$ and $\mu_Y$ with respect to $X$ reads 
    \begin{equation} \label{eq:grad-SW}
        \nabla_{X_k}\SW_2^2(\mu_X, \mu_Y) 
        = \frac{2}{N}\int_{\D} 
            \left[
                \cS_\bpsi\left(X_k\right) - \cS_\bpsi\left( Y_{\sigma_{Y, \bpsi}\circ \sigma_{X, \bpsi}^{-1}(k)} \right)
            \right] \nabla \cS_\bpsi(X_k) \d u_{\D}(\bpsi),
    \end{equation}
    where $\sigma_{X,\bpsi}\colon \ii{N}\to\ii{N}$ is a permutation which sorts $\cS_\bpsi(X_k)$, i.e.,
    $$\cS_\bpsi(X_{\sigma_{X, \bpsi}(1)}) \leq \cS_\bpsi(X_{\sigma_{X, \bpsi}(2)}) \leq ... \leq \cS_\bpsi(X_{\sigma_{X, \bpsi}(N)}).$$ 
\end{theorem}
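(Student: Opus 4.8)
The plan is to compute the Euclidean gradient of $\SW_2^2(\mu_X,\mu_Y)$ first and then project onto the tangent space, using the fact established at the end of the excerpt that $\nabla f(x) = \proj_{T_x\X}(\nabla_{\R^d} f(x))$ for the restriction of a smooth function. By definition in \eqref{eq:SW-general},
\[
  \SW_2^2(\mu_X,\mu_Y)
  =
  \int_\D \WS_2^2\big((\cS_\bpsi)_\#\mu_X,(\cS_\bpsi)_\#\mu_Y\big)\d u_\D(\bpsi),
\]
so the first step is to understand the integrand for fixed $\bpsi$. Since $\mu_X$ and $\mu_Y$ are empirical measures with $N$ equally weighted atoms, $(\cS_\bpsi)_\#\mu_X = \frac1N\sum_k \delta_{\cS_\bpsi(X_k)}$ and likewise for $Y$; these are one-dimensional measures, so by the quantile formula recalled in the preliminaries the optimal transport cost is obtained by sorting. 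Concretely, with the sorting permutation $\sigma_{X,\bpsi}$ (and $\sigma_{Y,\bpsi}$) one has
\[
  \WS_2^2\big((\cS_\bpsi)_\#\mu_X,(\cS_\bpsi)_\#\mu_Y\big)
  =
  \frac1N \sum_{k=1}^N \big(\cS_\bpsi(X_{\sigma_{X,\bpsi}(k)}) - \cS_\bpsi(Y_{\sigma_{Y,\bpsi}(k)})\big)^2,
\]
which after reindexing $k \mapsto \sigma_{X,\bpsi}^{-1}(k)$ becomes $\frac1N\sum_k (\cS_\bpsi(X_k) - \cS_\bpsi(Y_{\sigma_{Y,\bpsi}\circ\sigma_{X,\bpsi}^{-1}(k)}))^2$.

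The second step is differentiation in $X_k$. The key point is that the sorting permutation $\sigma_{X,\bpsi}$ is locally constant in $X$ away from ties: by the hypothesis that $\cS_\bpsi(\bx_1)\neq\cS_\bpsi(\bx_2)$ for $u_\D$-a.e.\ $\bpsi$ whenever $\bx_1\neq\bx_2$, the points $X_k$ being pairwise distinct guarantees that for a.e.\ $\bpsi$ the values $\cS_\bpsi(X_k)$ are pairwise distinct, hence in a neighborhood of $X$ the permutation $\sigma_{X,\bpsi}$ does not change and the matching $\sigma_{Y,\bpsi}\circ\sigma_{X,\bpsi}^{-1}$ is fixed. Therefore one may differentiate the integrand as if the coupling were constant, using the chain rule on $\cS_\bpsi$, to get
\[
  \nabla_{X_k}\Big[\tfrac1N\big(\cS_\bpsi(X_k) - \cS_\bpsi(Y_{\sigma_{Y,\bpsi}\circ\sigma_{X,\bpsi}^{-1}(k)})\big)^2\Big]
  =
  \tfrac2N\big[\cS_\bpsi(X_k) - \cS_\bpsi(Y_{\sigma_{Y,\bpsi}\circ\sigma_{X,\bpsi}^{-1}(k)})\big]\,\nabla\cS_\bpsi(X_k).
\]
The third step is to interchange differentiation and the integral over $\D$; this is justified by the stated hypotheses (boundedness of $(\bx,\bpsi)\mapsto\cS_\bpsi(\bx)$ on $\X\times\D$ so that the bracketed factor is bounded, and integrability of $\bpsi\mapsto\nabla\cS_\bpsi(\bx)$ uniformly in $\bx$ so that the product is dominated) via the differentiation-under-the-integral theorem, which gives precisely \eqref{eq:grad-SW} as the Euclidean gradient. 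Finally one observes that since each $\nabla\cS_\bpsi(X_k)$ can be replaced by its tangential projection $\proj_{T_{X_k}\X}\nabla\cS_\bpsi(X_k)$ after projecting (the projection commutes with the integral and with scalar multiplication), the Riemannian gradient is given by the same formula with $\nabla\cS_\bpsi$ understood as the Riemannian gradient of $\cS_\bpsi$ on $\X$.

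The main obstacle is the non-smoothness coming from the sorting: $\WS_2^2$ of the pushforwards is not differentiable at configurations where two projected points coincide, i.e.\ where $\cS_\bpsi(X_k) = \cS_\bpsi(X_\ell)$ for some $k\neq\ell$. The argument needs the assumption that this degenerate set of $\bpsi$ has $u_\D$-measure zero for each fixed distinct pair $X_k\neq X_\ell$ (which follows from the hypothesis), so that after discarding a null set the map $\bpsi\mapsto \sigma_{X,\bpsi}$ is locally constant in a neighborhood of $X$ and the integrand is differentiable for a.e.\ $\bpsi$; one then has to check that the pointwise derivative is dominated uniformly in a neighborhood so that the integral of the derivative equals the derivative of the integral. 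A secondary technical point is that the sorting permutation is only essentially unique (ties aside), but this does not affect the value of $\WS_2^2$ nor, after the a.e.\ reduction, the formula for the gradient. The remaining steps — the one-dimensional Wasserstein formula, the chain rule, and the tangent-space projection — are routine given the results recalled in Sections~\ref{sec:prelim} and the preceding paragraph.
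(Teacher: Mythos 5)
Your proposal is correct and follows essentially the same route as the paper's proof: express the one-dimensional Wasserstein distance via the quantile/sorting formula, argue that for $u_\D$-a.e.\ $\bpsi$ the sorting permutation is locally constant in $X$ (so the integrand is differentiable with the stated derivative), and then interchange $\nabla_{X_k}$ with the integral over $\D$ using the boundedness and integrability hypotheses. The only addition is your explicit remark about projecting the Euclidean gradient onto $T_{X_k}\X$, which the paper leaves implicit by interpreting $\nabla\cS_\bpsi$ as the Riemannian gradient throughout.
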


\begin{proof}
    Let $\bpsi\in \D$. For the sake of simplicity, we write $\cS_\bpsi(X) \coloneqq (\cS_\bpsi(X_k))_{1\leq k \leq N}\in \R^N$.
    The pseudo-inverse of the cumulative density function of $\mu_{\cS_\bpsi(X)}$ is written, for $r\in(0, 1)$,
    \begin{equation}
    \begin{split}
        F_{\mu_{\cS_\bpsi(X)}}^{-1} (r)
        &= \min\left\{x\in\R\cup\{-\infty\}\ \left|\ \frac{1}{N}\sum_{k = 1}^N\mathds{1}_{\left[\cS_\bpsi(X_k), 1\right]}(x) \geq r\right.\right\}\\
        &= \cS_\bpsi\left(X_{\sigma_{X, \bpsi}(\lceil rN\rceil)}\right).
    \end{split}
    \end{equation}
    Then
    \begin{align*}
        \WS_2^2\big((\cS_\bpsi)_\#\mu_X, (\cS_\bpsi)_\#\mu_Y\big)
        &= \WS_2^2\left(\mu_{\cS_\bpsi(X)}, \mu_{\cS_\bpsi(Y)}\right)\\
        &= \int_{[0, 1]}\left|F_{\mu_{\cS_\bpsi(X)}}^{-1}(r) - F_{\mu_{\cS_\bpsi(Y)}}^{-1}(r)\right|^2 \d r\\
        &= \sum_{k = 1}^N \frac{1}{N}\Big|\cS_\bpsi\left(X_{\sigma_{X, \bpsi}(k)}\right) - \cS_\bpsi\left(Y_{\sigma_{Y,\bpsi}(k)}\right)\Big|^2 \\
        &= \frac{1}{N}\sum_{k = 1}^N \left|\cS_\bpsi\left(X_k\right) - \cS_\bpsi\left( Y_{\sigma_{Y, \bpsi}\circ \sigma_{X, \bpsi}^{-1}(k)}\right)\right|^2,
    \end{align*}
    is bounded and, thus, integrable with respect to $\bpsi$ on $(\D, u_\D)$. Its gradient is given by
    \begin{equation}\label{eq:grad-W}
        \nabla_{X_k}\WS_2^2\big((\cS_\bpsi)_\#\mu_X, (\cS_\bpsi)_\#\mu_Y\big) = \left[\cS_\bpsi\left(X_k\right) - \cS_\bpsi\big(Y_{\sigma_{Y, \bpsi}\circ \sigma_{X, \bpsi}^{-1}(k)}\big)\right]\nabla \cS_\bpsi(X_k)
    \end{equation}
    for $u_\D$-almost every $\bpsi \in \D$. Indeed, we assume that $X_k$, $k\in \ii{N}$, are pairwise distinct, so  $\cS_\bpsi(X_k)$ are $u_\D$-almost surely pairwise distinct, so $\sigma_{X, \bpsi}$ is $u_\D$-almost surely uniquely defined, and constant in the neighborhood of $X$. Hence, \eqref{eq:grad-W} is integrable on $\D$ since $\nabla \cS_\psi$ is and the rest is bounded by assumption. 
    Therefore, similarly to \cite{BonRabPeyPfi15}, we have
    \begin{align}
        \SW_p^p(\mu_Y, \mu_X) 
        &= \int_\D \WS_p^p\big((\cS_\bpsi)_\#\mu_X, (\cS_\bpsi)_\#\mu_Y\big)\d u(\bpsi)\\
        &= \frac{1}{N}\int_\D \sum_{k = 1}^N \left|\cS_\bpsi\left(X_k\right) - \cS_\bpsi\left( Y_{\sigma_{Y, \bpsi}\circ \sigma_{X, \bpsi}^{-1}(k)}\right)\right|^p\d u(\bpsi)
        \intertext{and}
        \nabla_{X_k}\SW_2^2(\mu_Y, \mu_X)
        &= \int_\D \nabla_{X_k}\WS_2^2\left(\mu_{\cS_\bpsi(Y)}, \mu_{\cS_\bpsi(X)}\right)\d u(\bpsi) \label{eq:int_grad}\\
        &= \frac{2}{N}\int_\D \left[\cS_\bpsi\left(X_k\right) - \cS_\bpsi\left(Y_{\sigma_{Y, \bpsi}\circ \sigma_{X, \bpsi}^{-1}(k)}\right)\right]\nabla \cS_\bpsi(X_k) \d u(\bpsi). \qedhere\label{eq:grad_sig}
    \end{align}
\end{proof}
    
We discretize \eqref{eq:grad-SW} over $\bpsi$ via considering $(\bpsi_q)_{q=1}^{P}$ distributed according to the uniform measure $u_\D$ to get a numerical approximation. 
This enables us to devise a stochastic gradient descent algorithm with initialization $X^0\in\X^N$ and whose step $l\in\N$ is given by
\begin{equation}\label{eq:sgd}
\begin{split}
    X_k^{l+1}
    &= \exp^{\X}_{X_k^l}(- \tau_l\nabla\cE(X^l)_k)\\
    &= \exp^{\X}_{X_k^l} \left(- \tau_l
        \sum_{i = 1}^M\frac{2\lambda_i}{N P}
            \sum_{q = 1} ^P
                \left[
                    \cS_{\bpsi_q}\big(X^l_k\big)
                    - \cS_{\bpsi_q}\left(Y^{(i)}_{\sigma_{Y^{(i)}, {\bpsi_q}} \circ \sigma_{X^l, {\bpsi_q}}^{-1}(k)}\right) 
                \right]\nabla \cS_\bpsi(X_k^l)
        \right)
\end{split}
\end{equation}
for every $k\in\ii{N}$,
where $\exp^{\X}_x$ denotes exponential map, which maps a subset of the tangent space $T_x\X$ to the manifold $\X$, and $\tau_l>0$ is the step size, also known as the learning rate.

\begin{remark*}\label{rem:generalization_diffnpts}
    The last theorem can be extended to the case where both measures have different numbers of points. 
    The squared Wasserstein distance between $\mu_x$ and $\mu_y$, with $x = (x_i)_{1\leq i \leq N} \in \R^N$ and $y = (y_j)_{1\leq j\leq M} \in \R^M$ two sorted lists of real numbers, is not anymore $\frac{1}{N}\|x - y\|_{\R^N}^2$ but is of the shape $\sum_{i,j}\pi_{i,j}(x_i - y_j)^2$ with the transport plan $\pi \in \R^{N\times M}$ that depends only on $N$ and $M$ (not $x$ nor $y$). Because the matrix $\pi$ is sparse with support close to the diagonal $\frac{i}{j}\approx \frac{N}{M}$, we can generalize our algorithm while keeping its complexity $\mathcal O((N+M)\log(N+M))$.\qed
\end{remark*}

\paragraph{Application to $\PSW$ on the sphere} 
We look at the stochastic gradient descent step for the sphere with the parallel slicing operator  \eqref{eq:slice}.
Let $\bx,\bpsi\in\S^{d-1}$.
The projection to the tangential plane $T_\bx{\S^{d-1}} = \{\bv\in \R^d\mid \inn{\bv, \bx} = 0\}$ is 
\begin{equation}
    \proj_{T_\bx{\S^{d-1}}}\colon \R^d\to T_\bx{\S^{d-1}},\ \bv\mapsto \bv - \inn{\bx, \bv}\bx,
\end{equation}
and we have $\nabla\cS_\bpsi(\bx) = \proj_{T_\bx{\S^{d-1}}}(\bpsi)$.
As a consequence, the stochastic gradient descent step \eqref{eq:sgd} is 
\begin{equation} \label{eq:free-psb-sgd}
    X_k^{l+1} = \exp^{\S^{d-1}}_{X_k^l}\left[- \tau_l \proj_{T_{X_k^l}{\S^{d-1}}}\left(
        \sum_{i = 1}^M\frac{2\lambda_i}{N P}
            \sum_{q = 1} ^P
                \inn{\bpsi_q,\ 
                     X^l_k - Y^{(i)}_{\sigma_{Y^{(i)}, {\bpsi_q}}\circ\sigma_{X^l, {\bpsi_q}}^{-1}(k)}}
            \bpsi_q\right)
        \right]
\end{equation}
with the exponential map $\exp^{\S^{d-1}}_\bx (\bv) \coloneqq \cos(\|\bv\|)\bx + \sin(\|\bv\|)\frac{\bv}{\|\bv\|}$.

The numerical complexity of this step is $\mathcal O(MN(\log(N)+d)P)$ with $M$ the number of measures, $N$ the number of points in each measure and $P$ the number of directions (i.e.\ of projections or slices), since the sorting has complexity $\mathcal O(N\log(N))$ and the dimension $d$ comes in only due to the computation of the inner product in $\R^d$ and the generation of uniform samples on $\S^{d-1}$. 

\paragraph{Application to $\SOSW$ on the rotation group}
\label{sec:disc-so3} 
Let us now look at {the} case $\X = \SO$ with the slicing operator $\cS^{\SO}_\bpsi(\bR) 
=\trace(\bR^\top\bpsi)$ for $\bpsi \in\D = \SO$ and $\bR \in \SO$.
{In order to avoid numerical instabilities due to the unboundedness of the derivative of the arccosine, we take here} a monotone transformation of the slicing operator \eqref{eq:SO_dist} in order to simplify computation while keeping the properties of $\SOSW$.
{Drawing random directions $\bpsi=\eul(\alpha,\beta,\gamma)\in\SO$ can be done by randomly generating Euler angles $\alpha,\gamma\sim u_{[0,2\pi]}$ and $\beta=\arccos t$ with $t\sim u_{[-1,1]}$.}
The gradient descent step \eqref{eq:sgd} is similar to the case of the sphere. 
The tangent space is ${T_\bR\SO = \{\bA \in \R^{3\times 3}\mid \bR^\top\bA = - \bA^\top\bR\}}$. Utilizing that the orthogonal projection on the set of skew-symmetric matrices is given by $\bA\mapsto \frac{1}{2}(\bA - \bA^\top)$, one can show similarly that the projection to ${T_\bR \SO}$ is given by
\begin{equation}
    \proj_{T_\bR \SO}\colon \R^{3\times3}\to T_\bR \SO,\quad \bA\mapsto \tfrac{1}{2}(\bA - \bR\bA^\top\bR).
\end{equation}
The exponential map is given by  \cite[3.37]{War83}
\begin{equation}
    \exp^{\SO}_\bR\colon  T_\bR \SO\to \SO,\quad \bA\mapsto \bR\,\exp(\bR^\top \bA).
\end{equation}
In order to avoid the computation of the matrix exponential, one could replace the exponential map by a retraction, see \cite[Sect.\ 4.1]{ABS2008}.

\subsubsection{Fixed-support discretization}\label{sec:disc-fixed}
Fixed-support sliced Wasserstein barycenters correspond to a Eulerian discretization. As opposed to \autoref{sec:disc-free}, the support is the same, fixed set for all measures (including the barycenter), and we minimize over the weights of the Dirac measures. 
We first study the 1-dimensional case of fixed-support OT.

\begin{theorem} \label{thm:fixed-1d}
    Let $\{t_j\mid j\in\ii{N}\}\subset\R$ with $t_1< t_2< ... < t_N$.
    Further, let $\bw, \bv \in \Delta_N $
    and the discrete probability measures $\mu_\bw = \sum_{j = 1}^N w_j\delta_{t_j}$ and $\mu_\bv = \sum_{j = 1}^N v_j\delta_{t_j}$.
    We introduce the partial sums $\tilde \bw = (\sum_{j = 1}^k w_j)_{k=1}^{N}$ and $\tilde \bv = (\sum_{j = 1}^k v_j)_{k=1}^{N}$ in $\R^N$
    as well as the vectors $\by=(\tilde w_1, ..., \tilde w_{N-1}, \tilde v_1, ..., \tilde v_{N-1}) \in \R^{2N-2}$ and $\bz = (u_{\sigma(j)})_{j=1}^{2N-2}$ with $\sigma$ a permutation such that $u_{\sigma(1)} \leq u_{\sigma(2)} \leq ... \leq u_{\sigma(2N-2)}$.
    We set $$a_j = \big| \smash{ t_{\min\{k\mid \tilde w_k \geq z_{j+1}\}} - t_{\min\{k\mid \tilde v_k \geq z_{j+1}\}}}\big|^p\quad \text{for } j \in \ii{2N-3},$$ and $a_0 = a_{2N-2} = 0$. 
    Then the gradient of $\WS_p^p(\mu_\bw, \mu_\bv)$ in $\Delta_N$ with respect to $\bw$ is given almost everywhere 
    by 
    \begin{equation} \label{eq:fixed-1d-grad}
        \nabla_{\bw} \WS_p^p(\mu_\bw, \mu_\bv) = \proj_H \left( \left(\sum_{k = j}^{N-1} \big(a_{\sigma^{-1}(k) - 1} - a_{\sigma^{-1}(k)}\big) \right)_{j=1}^N 
        \right),
    \end{equation}
    where $\proj_H\colon \R^N\to H$, $\bx\mapsto \bx - \inn{\bx,\mathds1}\mathds1$ is the orthogonal projection on the hyperplane $H = \{ \bx\in \R^N\mid \inn{\bx, \mathds{1}} = 0\}$.
\end{theorem}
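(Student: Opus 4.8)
The plan is to reduce everything to the explicit one-dimensional formula $\WS_p(\mu,\nu)=\norm{F_\mu^{-1}-F_\nu^{-1}}_{L^p([0,1])}$ and to use that, on a fixed grid, the quantile functions are step functions. First I would observe that $\tilde w_k=F_{\mu_\bw}(t_k)$ and $\tilde v_k=F_{\mu_\bv}(t_k)$, so that $F_{\mu_\bw}^{-1}(r)=t_{\min\{k\mid\tilde w_k\ge r\}}$ and $F_{\mu_\bv}^{-1}(r)=t_{\min\{k\mid\tilde v_k\ge r\}}$ for $r\in(0,1]$, each being constant between consecutive entries of $\by$. Writing $z_j\coloneqq y_{\sigma(j)}$ for $j\in\ii{2N-2}$ (so $z_1\le\dots\le z_{2N-2}$) and $z_0\coloneqq0$, $z_{2N-1}\coloneqq1$, no entry of $\by$ lies strictly inside $(z_j,z_{j+1})$, so the selected indices $\min\{k\mid\tilde w_k\ge r\}$ and $\min\{k\mid\tilde v_k\ge r\}$ are constant on $(z_j,z_{j+1}]$ and equal their values at $z_{j+1}$; hence the integrand $\lvert F_{\mu_\bw}^{-1}-F_{\mu_\bv}^{-1}\rvert^p$ equals $a_j$ on $(z_j,z_{j+1}]$. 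Checking the two extreme intervals gives $F_{\mu_\bw}^{-1}=t_1=F_{\mu_\bv}^{-1}$ on $(0,z_1]$ and $F_{\mu_\bw}^{-1}=t_N=F_{\mu_\bv}^{-1}$ on $(z_{2N-2},1]$, which is why $a_0=a_{2N-2}=0$ is the correct convention. Since $\WS_p^p(\mu_\bw,\mu_\bv)=\int_0^1\lvert F_{\mu_\bw}^{-1}(r)-F_{\mu_\bv}^{-1}(r)\rvert^p\,\d r$, integrating the piecewise-constant integrand yields $\WS_p^p(\mu_\bw,\mu_\bv)=\sum_{j=0}^{2N-2}a_j(z_{j+1}-z_j)$.

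Second, I would differentiate this in $\bw$. Off a Lebesgue-null subset of $\Delta_N$ (where, e.g., two entries of $\by$ coincide, or $\bw\in\partial\Delta_N$) the permutation $\sigma$ and the index maps $\min\{k\mid\tilde w_k\ge z_{j+1}\}$, and hence all $a_j$, are locally constant, since they are determined by the ordering of the components of $\by$ together with $\tilde w_N=\tilde v_N=1$. There, $\WS_p^p(\mu_\bw,\mu_\bv)$ is an affine function of $\bw$ (with $\bv$ fixed), because $z_i$ depends on $\bw$ only when $\sigma(i)\le N-1$, and then $z_i=\tilde w_{\sigma(i)}=\sum_{l\le\sigma(i)}w_l$ is linear, while all other $z_i$ (and $z_0,z_{2N-1}$) are constant in $\bw$. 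A telescoping rearrangement gives $\WS_p^p(\mu_\bw,\mu_\bv)=\sum_{i=1}^{2N-2}(a_{i-1}-a_i)z_i$ (using $z_0=0$ and $a_{2N-2}=0$), whence, for $l\in\ii{N-1}$,
\[
\frac{\partial}{\partial w_l}\WS_p^p(\mu_\bw,\mu_\bv)
=\sum_{i:\,\sigma(i)\le N-1}(a_{i-1}-a_i)\,\mathds{1}_{l\le\sigma(i)}
=\sum_{k=l}^{N-1}\bigl(a_{\sigma^{-1}(k)-1}-a_{\sigma^{-1}(k)}\bigr),
\]
while the partial derivative vanishes for $l=N$. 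Write $\bg\in\R^N$ for the vector with $g_l=\sum_{k=l}^{N-1}(a_{\sigma^{-1}(k)-1}-a_{\sigma^{-1}(k)})$ (so $g_N=0$); this is exactly the argument of $\proj_H$ in \eqref{eq:fixed-1d-grad}.

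Finally, I would pass from this ambient gradient to the gradient in $\Delta_N$. Since $\Delta_N$ is an open subset of the affine hyperplane $\{\bx\in\R^N\mid\inn{\bx,\mathds{1}}=1\}$ with tangent space $H=\{\bx\in\R^N\mid\inn{\bx,\mathds{1}}=0\}$, the Riemannian gradient of the restriction of a differentiable function equals the orthogonal projection of its ambient gradient onto $H$, see \cite[sect.\ 3.6.1]{ABS2008}. Equivalently, for every $\bh\in H$ the directional derivative computed above equals $\sum_{l=1}^{N-1}h_lg_l=\sum_{l=1}^{N}h_lg_l=\inn{\bh,\bg}=\inn{\bh,\proj_H\bg}$, using $g_N=0$ and $\bh\in H$; since $\proj_H\bg\in H=T\Delta_N$ and this holds for all $\bh\in H$, the gradient in $\Delta_N$ is $\proj_H(\bg)$, which is \eqref{eq:fixed-1d-grad}.

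I expect the main obstacle to be the bookkeeping rather than anything conceptual: verifying that the exceptional set is genuinely Lebesgue-null, that $\sigma$, the two families of index maps and the $a_j$ are all locally constant off it — so that $\WS_p^p$ is locally affine there (it is, in any case, continuous and polyhedral-piecewise-affine on $\Delta_N$, hence differentiable a.e.) — and keeping the permutation indices consistent through the telescoping step and the final reindexing $k=\sigma(i)$.
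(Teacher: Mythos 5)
Your proof is correct and follows essentially the same path as the paper's: reduce to the explicit one-dimensional quantile formula, express $\WS_p^p$ as the piecewise-constant integral $\sum_j a_j(z_{j+1}-z_j)$, differentiate using that the combinatorial data ($\sigma$, the index maps, hence the $a_j$) are a.e.~locally constant, and project the ambient gradient onto $H$. The only departure is cosmetic: where the paper differentiates $\sum_j a_j(z_{j+1}-z_j)$ term-by-term in the intermediate variables $\tilde w_k$ and then composes with the chain rule through the bijection $\bw\leftrightarrow\tilde\bw$, you telescope first to $\sum_i(a_{i-1}-a_i)z_i$, observe this is locally affine in $\bw$, and differentiate directly — arriving at the same formula $g_l=\sum_{k=l}^{N-1}(a_{\sigma^{-1}(k)-1}-a_{\sigma^{-1}(k)})$. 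Your final passage to the Riemannian gradient via directional derivatives in $H$ is a somewhat more explicit justification than the paper's one-line invocation of the projection formula, and it helpfully clarifies why the arbitrary convention $g_N=0$ (the paper's ``extend constant in $w_N$'') is immaterial after projecting.
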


\begin{proof}
    The pseudo-inverse of the cumulative distribution function of $\mu_\bw$ is 
    $$F_{\mu_\bw}^{-1}(r) = \min\left\{s\in\R \left|\ \sum_{i = 1}^N w_i\mathds{1}_{[t_i, +\infty)}(s) \geq r\right.\right\} = t_{\min\{k\mid \tilde w_k \geq r\}}, \quad \forall r \in (0, 1),$$ 
    and analogously for $\bv$.
    We thus have
    \begin{align*}
        \WS_p^p(\mu_\bw, \mu_\bv) 
        &= \int_0^1 \left| F_{\mu_\bw}^{-1}(r) - F_{\mu_\bv}^{-1}(r)\right|^p \d r\\
        &= \int_0^1 \left| t_{\min\{k\mid \tilde w_k \geq r\}} - t_{\min\{k\mid \tilde v_k \geq r\}}\right|^p \d r\\
        &= \sum_{j = 1}^{2N - 3} \left| t_{\min\{k\mid \tilde w_k \geq z_{j+1}\}} - t_{\min\{k\mid \tilde v_k \geq z_{j+1}\}}\right|^p (z_{j+1} - z_{j})\\
        &= \sum_{j = 1}^{2N - 3} a_j (z_{j+1} - z_{j}),
    \end{align*}
    where we have ignored the segment $(0, z_1]$ on which the integrand is 0, and the segment $[z_{2N-2}, 1)$ on which the integrand is 0 or which is empty if $z_{2N-2} = 1$. 
    
    We extend the map $\bw\mapsto \WS_p^p(\mu_\bw, \mu_\bv)$ from $\Delta_N$ to some neighborhood in the Euclidean space $\R^N$ by making it constant in the last component $w_N$, then  $\frac{\mathrm d}{\mathrm d {w_N}} \WS_p^p(\mu_\bw, \mu_\bv) = 0.$
    For $k \in \ii{ N-1}$, since $z_j = \tilde w_{\sigma(j)}$, we have 
    \begin{equation*}
        \frac{\mathrm d}{\mathrm d \tilde w_k} \WS_p^p(\mu_\bw, \mu_\bv) = a_{\sigma^{-1}(k) - 1} - a_{\sigma^{-1}(k)}, \quad \text{a.e.}
    \end{equation*}
    Indeed, $a_j$ is almost everywhere constant with respect to $\tilde w_k$. It is justified using the fact that the $z_j$ are almost surely pairwise distinct. So, if $\tilde w_k = z_{j+1}$, then $z_{j+1}$ ``moves'' (a.s.) with $\tilde w_k$, such that $\{k'\mid \tilde w_{k'} \geq z_{j+1}\}$ does not change. If $\tilde w_k \neq z_{j+1}$, it is obvious that $\{k'\mid \tilde w_{k'}\geq z_{j+1}\}$ is locally constant with respect to $\tilde w_k$.    
    The definition of $\tilde\bw$ gives a bijection between $(w_j)_{j=1}^{N-1}$ and $(\tilde w_k)_{k=1}^{N-1}$, so
    we obtain
    for any $j\in\ii{N-1}$
    \begin{equation*}
        \frac{\mathrm d}{\mathrm d {w_j}} \WS_p^p(\mu_\bw, \mu_\bv) 
        = \sum_{k = 1}^{N} \frac{\mathrm d \tilde w_k}{\mathrm d w_j} \frac{\mathrm d}{\mathrm d {\tilde w_k}} \WS_p^p(\mu_\bw, \mu_\bv)
        = \sum_{k = j}^{N-1} \frac{\mathrm d}{\mathrm d \tilde w_k} \WS_p^p(\mu_\bw, \mu_\bv),
    \end{equation*}
    The projection onto the hyperplane $H$ yields the Riemannian gradient in $\Delta_N$.
\end{proof}

We now come to the sliced OT on $\X\in\{\S^{d-1}, \SO\}$. 
We consider a family $X = (\bx_j)_{j=1}^{N} \in \X^N$ of points on $\X$ representing the fixed support and measures of the form
\begin{equation*} \label{eq:fixed}
    \mu_\bw = \sum_{j = 1}^ N w_j \delta_{x_j}
\end{equation*}
with some weight vector $\bw \in \Delta_N$.
Let $\bv^{(i)} \in \Delta_N$ for $i \in \ii{M}$ be given probability vectors of the measures whose barycenter we want to compute.
Therefore, we minimize 
\begin{equation*}
    \cE\colon \Delta_N \to \R, \qquad \bw \mapsto \displaystyle\sum_{i =1}^ M \lambda_i \SW_p^p(\mu_\bw, \mu_{\bv^{(i)}}).
\end{equation*}
We have, for $i \in \ii{M}$, 
\begin{align*}
\SW_p^p(\mu_w, \mu_{v^{(i)}}) 
&= \int_\D \WS_p^p\big((\cS_\bpsi)_\#\mu_\bw, (\cS_\bpsi)_\#\mu_{\bv^{(i)}}\big)\d u_\D(\bpsi) \\
&= \int_\D \WS_p^p\left(\sum_{j = 1}^N w_j\delta_{\cS_\bpsi(\bx_j)}, \sum_{j = 1}^N v^{(i)}_j\delta_{\cS_\bpsi(\bx_j)}\right) \d u_\D(\bpsi),
\end{align*}
and therefore
\begin{equation*}
    \nabla_\bw\SW_p^p(\mu_\bw, \mu_{\bv ^{(i)}}) = \int_\D \nabla_\bw \WS_p^p\left(\sum_{j = 1}^ N w_j\delta_{\cS_\bpsi(\bx_j)}, \sum_{j = 1}^ N v_j^{(i)}\delta_{\cS_\bpsi(\bx_j)}\right) \d u_\D(\bpsi).
\end{equation*}
As the integrand of the last equation is handled in \autoref{thm:fixed-1d},
we can thus compute $\nabla \cE$ and devise a stochastic gradient descent, as synthesized in \autoref{alg:fixed}, where we use $P$ projections per descent step as above.

\begin{algorithm}[htp]
    \caption{Fixed support sliced Wasserstein Barycenter algorithm}\label{alg:fixed}
    \begin{algorithmic}
        \Require Support $X=(\bx_j)_{1\leq j\leq N} \in\X^N\subset \R^{N\times d}$, weights $\bv^{(i)}\in \Delta_N, \forall i \in \ii{M}$, step size $\tau>0$, initialization $\bw^0 \in \Delta_N$, number $P$ of slices
        \For{$l = 1, ...$}
            \State $(\bpsi_q)_{1\leq q\leq P} \gets \texttt{generate\_uniform\_samples\_on\_}\,\D(P)$
            \For{$i \in\ii{M}$ and $q\in\ii{P}$}
                \State $(s_j)_{1\leq j\leq N} \gets (\cS_{\bpsi_q}(\bx_j))_{1\leq j\leq N}$
                \vspace{4mm}
                \State $\zb\rho \gets \texttt{argsort}\left((s_j)_{1\leq j\leq N}\right)$
                \State $\zb t \gets \left(s_{\rho(j)}\right)_{1\leq j\leq N}$ \Comment{sorted}
                \State $\tilde \bw \gets \texttt{cumsum} \left(w^l_{\rho(j)}\right)_{1\leq j\leq N-1}$ \Comment{cumulative sum}
                \State $\tilde \bv \gets \texttt{cumsum} \left(v^{(i)}_{\rho(j)}\right)_{1\leq j\leq N-1}$
                \vspace{4mm}
                \State $\bu \gets \texttt{concatenate} (\tilde \bw, \tilde \bv)$
                \State $\texttt{mw} \gets (1, ..., 1, 0, ..., 0)\in \R^{2N-2}$
                \State $\zb\sigma \gets \texttt{argsort}(\bu)$
                \State $\bz \gets (u_{\sigma(j)})_{1\leq j\leq 2N-2}$
                \State $\texttt{mw} \gets (\texttt{mw}_{\sigma(j)})_{1\leq j\leq 2N-2}$
                \vspace{4mm}
                \State $\texttt{idx\_w} \gets \texttt{cumsum}(\texttt{mw}) + 1$
                \State $\texttt{idx\_v} \gets \texttt{cumsum}(1 - \texttt{mw}) + 1$
                \State $\ba \gets (|t_{\texttt{idx\_w}(j)} - t_{\texttt{idx\_v}(j)}|^p)_{1\leq j\leq 2N-2}$
                \State $a_0 \gets 0$
                \vspace{4mm}
                \State $\tilde \bg \gets (a_{\sigma^{-1}(k) - 1} - a_{\sigma^{-1}(k)})_{1\leq k\leq N-1}$
                \State $\bg \gets \texttt{cumsum\_adj}(\tilde \bg)$ \Comment{cumulative sum from right to left}
                \State $g_N = 0$
                \State $\texttt{grad}_{i,q} = (g_{\rho^{-1}(j)})_{1\leq j\leq N}$
            \EndFor
            \State $\texttt{grad}_{\R^N} \gets \frac{1}{P}\sum_{i = 1}^M\lambda_i \sum_{q = 1}^P \texttt{grad}_{i,q}$
            \State $\texttt{grad}_{\Delta_N} \gets \proj_H(\texttt{grad}_{\R^N}) \qquad= \texttt{grad}_{\R^N} - \frac{1}{\sqrt{N}}\inn{\texttt{grad}_{\R^N}, \mathds{1}_N}$
            \State $w^{l+1} \gets \proj_{\Delta_N}(w^l - \tau_l \texttt{grad}_{\Delta_N})$
        \EndFor
        \State \textbf{Output:} $w^{l+1}$
    \end{algorithmic}
\end{algorithm}
    
The complexity of the computation of the gradient \eqref{eq:fixed-1d-grad} is $\mathcal O(N\log N)$, as we need to sort the points $(t_j)_{j=1}^{N}$ and the vector $\bu$, and all other operations are done in linear time.
The projection $\proj_{\Delta_N}\colon \R^N\to\Delta_N$ on the probability simplex can be computed in complexity $\mathcal O(N\log(N))$ using the algorithm from \cite{WanCar13}, see also \cite{Con16} for further numerical approaches.
Therefore, one iteration of \autoref{alg:fixed} has the arithmetic complexity $\mathcal O(MN (\log(N)+d)P)$. However, we want to point out that the number $N$ of points of a fixed grid generally grows with the dimension $d$.

\subsection{Radon Wasserstein barycenters}
\label{sec:bary-radon}

\emph{Radon Wasserstein barycenters} are obtained by first computing the 1D barycenter \eqref{eq:cdt-bary} for every slice,
stacking them together,
and then applying the pseudoinverse of the respective Radon transform,
cf.\ \cite{BonRabPeyPfi15,KolNadSimBadRoh19}.
Denoting by $\cZ\colon \P(\X) \to \P(\D\times\II)$ the generalized slicing transformation, we set for $\mu_m\in\Pac(\X)$ the Radon barycenter
\begin{equation} \label{eq:disc-svd}
    \bary_{\X}^{\cZ}(\mu_m, \lambda_m)_{m=1}^{M}
    \coloneqq
    \cZ^\dagger \left( \left( \bary_\II(\lambda_m,(\cS_\bpsi)_\#\mu_m)_{m=1}^{M} \right)_{\bpsi\in\D} \right),
\end{equation}
where $\cZ^\dagger$ is the pseudoinverse whose argument is viewed as a density function on $\D\times\II$.
In general, it is not clear if the pseudoinverse yields again a nonnegative function, which then gives a probability density.
On $\S^{d-1}$, it is fulfilled for the parallel slicing $\V$ by \autoref{thm:pos},
but not for the semicircular slicing, see \cite[sect.\ 6.2]{QueBeiSte23}.

The discretization is based on a fixed support.
We describe the parallel slicing case $\cZ=\U$ analogously to the semicircular case $\cZ=\cW$ from \cite{QueBeiSte23}.
Let $\bpsi_p \in \S^{d-1}$, $p\in\ii{P}$, be the nodes of a quadrature rule with weights $\bw\in\Delta_P$,
and some grid $t_\ell $, $\ell\in\ii{L}$, on the interval $\II$.
We denote the density function of $\mu_m$ by $f^{\mu_m}$
and assume that we are given $f^{\mu_m}(\bpsi_p)$ for $p\in\ii{P}$.
Firstly, we approximate $\U_{\bpsi_p}(t_\ell)$ via the singular value decomposition \eqref{eq:M-svd} for fixed truncation degree $D\in\N$ by
\begin{equation*}
    \U_{\bpsi_p} f^{\mu_m}(t_\ell)
    \approx
    \sum_{n=0}^D
    \sum_{j=1}^{N_{n, d}}
    \lambda_{n, d}^{\U}
    Y_{n, d}^j(\bpsi_p) \widetilde P_{n, d}(t_\ell)
    \sum_{i=1}^{P} f^{\mu_m}(\bpsi_i) \overline{Y_{n, d}^j(\bpsi_i)} w_i.
\end{equation*}
Secondly, we compute the density of the one-dimensional barycenters \eqref{eq:cdt-bary} of the measures $\U_{\bpsi_p}\mu_m$.
In particular, we set $g(\bpsi_p,\cdot)$ as the density function of
$$
\CDT^{-1}_{\omega}\left(
\sum_{m=1}^{M} \lambda_m \CDT_\omega [\U_{\bpsi_p}\mu_m] \right).
$$
Using again the singular value decomposition \autoref{thm:svd},
we note that the \emph{Moore--Penrose pseudoinverse} \cite{EnHaNe96} of $\U$ is given by
\begin{equation} \label{eq:Vdag}
  \U^\dagger \colon \operatorname{Range}(\U)\oplus \operatorname{Range}(\U)^\perp \to L^2(\S^{d-1})
  ,\quad
  \U^\dagger g 
  = \sum_{n=0}^{\infty}
  \sum_{k=1}^{N_{n,d}}
  \frac{1}{\lambda_{n,d}^{\U}} \,
  \inn{ g, Y_{n,d}^k\, \widetilde P_{n,d} } \,
  Y^k_{n,d}.
\end{equation}
Finally, we discretize the Moore--Penrose pseudoinverse to approximate the density of the desired barycenter $\bary_{\S^{d-1}}^{\U}$ by 
\begin{equation*}
    \U^\dag g(\bpsi_p)
    \approx
    \sum_{n=0}^D
    \sum_{j=1}^{N_{n, d}}
    \frac{1}{\lambda_{n,d}^{\U}}
    Y_{n, d}^j(\bpsi_p) 
    \frac{1}{L} \sum_{i=1}^{P} \sum_{\ell=1}^{L} g(\bpsi_i, t_\ell) \overline{Y_{n, d}^j(\bpsi_i)} \widetilde P_{n, d}(t_\ell) w_i.
\end{equation*}
We analyze the complexity for $\S^2$.
The sums over $n$ and $j$ constitute a nonuniform spherical Fourier transform, and the sum over $i$ its adjoint,
which can both be computed efficiently in $\mathcal{O}(D^2 \log (D)+P)$ steps, see \cite{kupo02} and \cite[sect.\ 9.6]{PlPoStTa23}.
The CDT \eqref{eq:cdt} and its inverse \eqref{eq:icdt} have linear complexity and can be computed with the algorithm \cite{KolParRoh16}.
Therefore, we obtain an overall complexity of $\mathcal{O}((D^2 \log (D)+P)LM)$.
Since the number of points is usually $P\sim D^2$, 
the complexity grows slower than for the algorithms of \autoref{sec:bary-sliced}.

\section{Numerical results} \label{sec:numerics}

In this section, we present numerical computations of sliced barycenters between two measures on the sphere. and visualize them $\S^2$. On general $\S^{d-1}$, we only look at free-support barycenters, which do not require grids that become unhandy in higher dimension, and test their convergence behavior and execution times.
We compare two notions of slicing:
our parallel slicing \eqref{eq:slice}
and the semicircular slicing of \autoref{rem:SSW},
each for the two different notions of free and fixed support sliced Wasserstein barycenters from \autoref{sec:bary-sliced} and the Radon barycenters from \autoref{sec:bary-radon}.
Further, we compare the fixed-support and Radon barycenters with the entropy-regularized version \cite{PeyCut19} of the Wasserstein barycenter \eqref{eq:W-bary} computed with PythonOT \cite{POT}. Finally, we consider the barycenters on $\SO$ in \autoref{sec:numerics-SO}. Our code is available online.\footnote{\url{https://github.com/leo-buecher/Sliced-OT-Sphere}}

\subsection{Free-support sliced Wasserstein barycenters on the sphere}

For the free-support discretization of \autoref{sec:disc-free},
we compute the parallelly sliced Wasserstein barycenter (PSB) $\bary^{\PSW}_{\S^2}$ with the stochastic gradient descent \eqref{eq:free-psb-sgd},
and the semicircular sliced Wasserstein barycenter (SSB) $\bary^{\SSW}_{\S^2}$, see \autoref{rem:SSW},
with the algorithm \cite{Bon22}, which
uses a similar gradient descent scheme.\footnote{See the code \url{https://github.com/clbonet/Spherical_Sliced-Wasserstein}}

The von Mises--Fisher (vMF) distribution with center $\zb\eta\in\S^2$
and concentration $\kappa > 0$ has the density function
\begin{equation} \label{eq:vmf}
    f(\bxi) 
    = \frac{\kappa}{4\pi \sinh\kappa}\, \e^{\kappa \inn{\bxi,\zb\eta}}
    ,\qquad \bxi\in\S^2.
\end{equation}
In the first setting, we consider the sliced Wasserstein barycenters of two vMF distributions with centers on the equator shifted by $90^\circ$ 
and concentration $\kappa = 100$, represented by $N=200$ samples each. 
We use 1000 iterations, $P=500$ slices, a step size of $\tau=40$ for the PSB algorithm and 80 for the SSB algorithm, and we take samples of the uniform distribution $u_{\S^2}$ as initialization $X^0$.  
The computed sliced Wasserstein barycenters are displayed in \autoref{fig:sphere-shape-comp-2vmf}. We observe that the SSB is slightly more extended toward the poles. 
    
\begin{figure}[!ht]
    \centering
    \includegraphics[height=4cm,trim={20 75 50 70},clip]{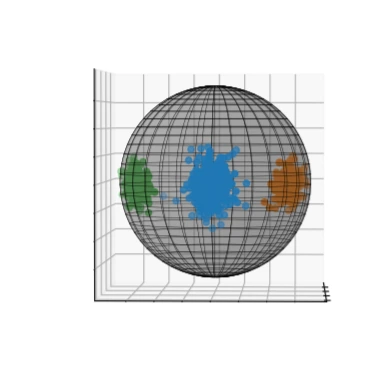}
    \qquad
    \includegraphics[height=4cm,trim={30 80 50 70},clip]{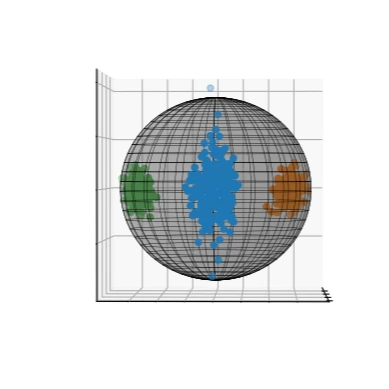}
    \caption{PSB (left) and SSB (right) of two vMF distributions. The green and orange points represent the two input measures, and the blue points the barycenter.}
    \label{fig:sphere-shape-comp-2vmf}
\end{figure}

In the second setting, we consider vMF distributions that are highly concentrated near the poles with $\kappa = 400$, to illustrate the observations of \autoref{rem:antipo_diracs}. 
The resulting sliced Wasserstein barycenters are shown in \autoref{fig:sphere-shape-comp-2diracs}. The PSB is seemingly uniform on the sphere, which corresponds to the initial distribution $X^0$. This is coherent with the observation that all measures on the sphere are PSB of two antipodal Dirac measures. Conversely, the SSB is supported on a ring around the equator.
\begin{figure}[!ht]
    \centering
    \includegraphics[height=4cm,trim={0 20 10 25},clip]{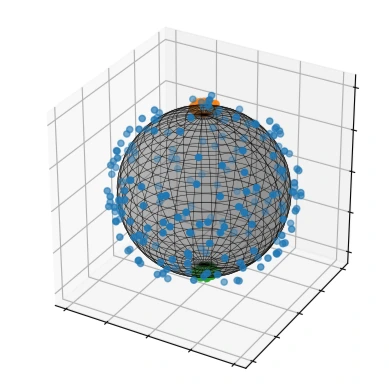}
    \qquad
    \includegraphics[height=4cm,trim={0 30 20 60},clip]{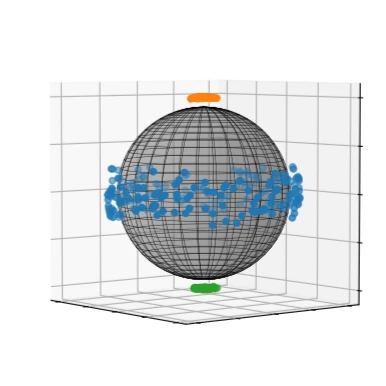}
    \caption{PSB (left) and SSB (right) of highly concentrated measures. The green and orange points represent the two input measures, and the blue points the barycenter.}
    \label{fig:sphere-shape-comp-2diracs}
\end{figure}

The third setting is with two ``croissant'' measures spanning from the South Pole to the North Pole, and rotated from each other by an angle of $120^\circ$. The resulting PSB and SSB in \autoref{fig:sphere-shape-comp-2crois} are quite similar, which illustrates the fact that in many cases the two notions of barycenters seem more or less to coincide.
\begin{figure}[!ht]
\centering
\includegraphics[height=3.6cm,trim={40 70 40 70}, clip]{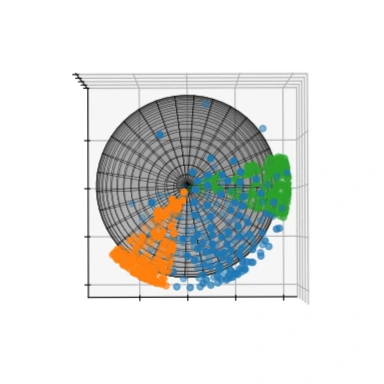}
\qquad
\includegraphics[height=3.6cm,trim={40 70 40 70}, clip]{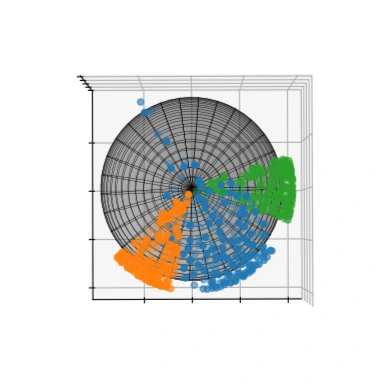}
\\
\includegraphics[height=4cm,trim={10 15 10 20}, clip]{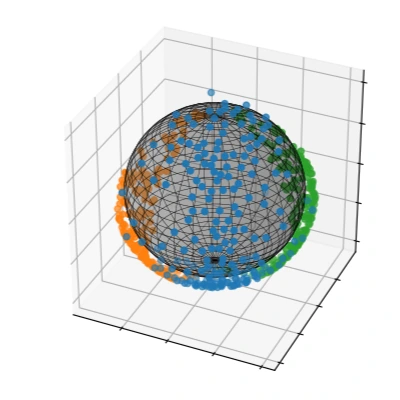}
\qquad
\includegraphics[height=4cm,trim={10 5 10 20}, clip]{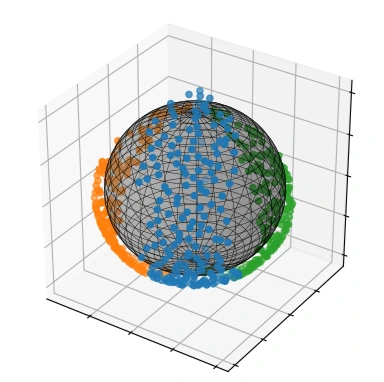}
\caption{PSB (left) and SSB (right) for 2 ``croissants'' measures. The green and orange points represent the two input measures, and the blue points represent the barycenter.}
\label{fig:sphere-shape-comp-2crois}
\end{figure}

\paragraph*{Convergence}
We study the convergence of the algorithms using the same measures as in \autoref{fig:sphere-shape-comp-2vmf}, but with $N=50$ samples. \autoref{fig:sphere-conv} shows the evolution of the loss function \eqref{eq:SW-bary} and the step norm, which is the $L^2$ norm of the step $X^{l+1} - X^{l}$, depending on the iteration. We show the results on $\S^{d-1}$ for $d=3$ and $d=10$, where we use $\tau=20$ for PSB and $\tau=50$ for SSB.
The two losses are rescaled so that the initial loss coincides, as they cannot be compared in absolute value. Despite
this, these loss evolutions remain difficult to compare, as they highly depend on the chosen step
size $\tau$. However, we observed that for appropriate step sizes $\tau$ (i.e. that avoid oscillatory behavior around the minimizer), the PSB algorithm converges faster on $\S^2$. On $\S^{9}$, both algorithms show a similar speed of convergence. This change in dimension might be explained by the fact that a uniform measure on $\S^{d-1}$ (which is our initial distribution) is projected by the semicircular slicing to a uniform measure on the circle, while the parallel slicing  becomes more concentrated around 0 on the unit interval $\II$ for larger $d$.
\begin{figure}[!ht]
    \centering
    \includegraphics[height=4.2cm,trim={52 20 40 40},clip]{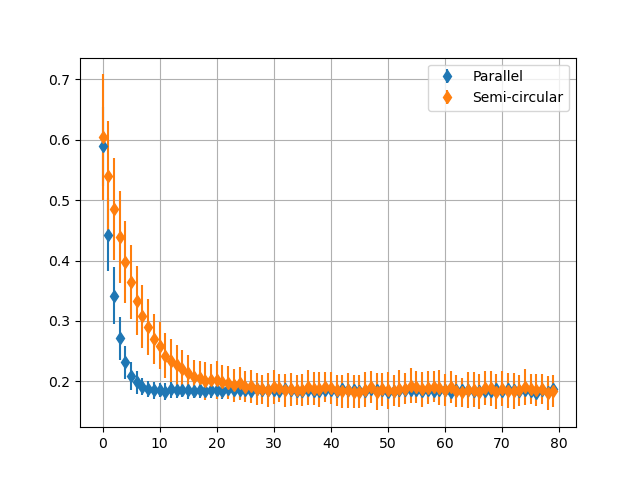} 
    \qquad
    \includegraphics[height=4.2cm,trim={52 20 40 40},clip]{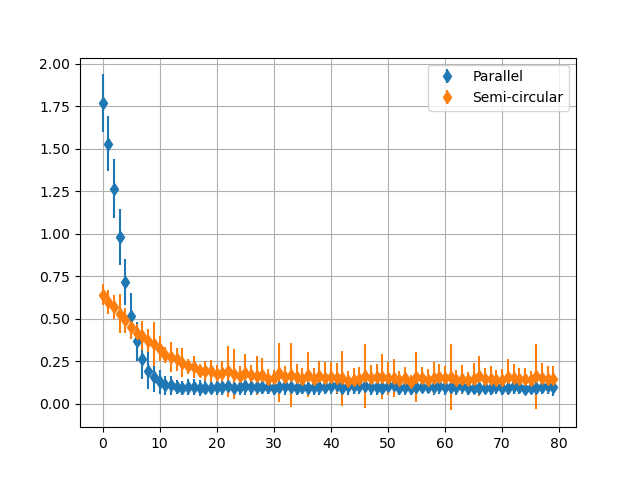}\\
    \includegraphics[height=4.2cm,trim={52 20 40 40},clip]{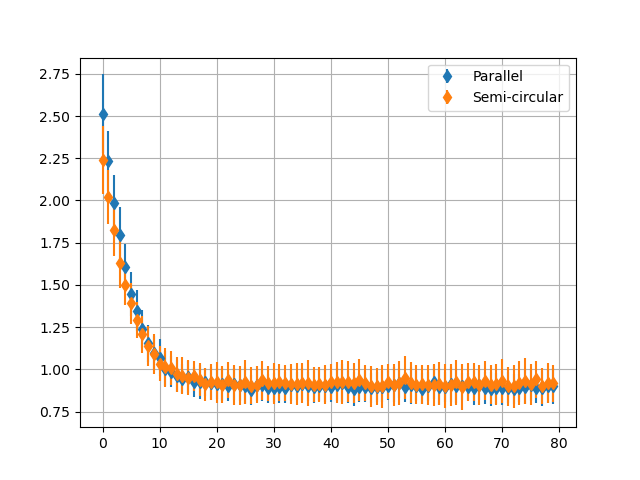} 
    \qquad
    \includegraphics[height=4.2cm,trim={52 20 40 40},clip]{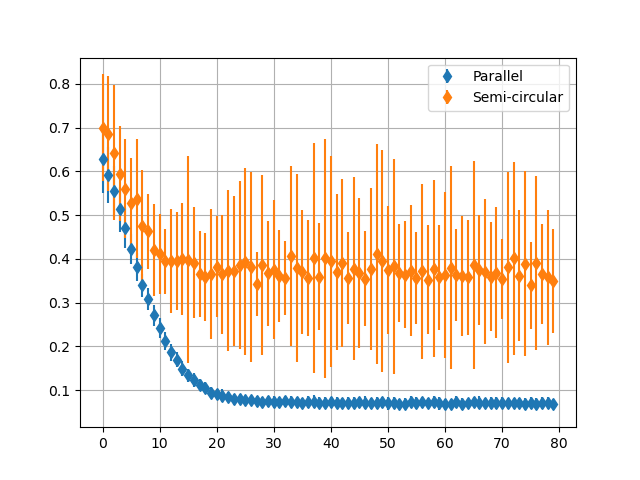}
    \caption{Loss function (left) and step norm (right) on $\S^{d-1}$ for $d=3$ (top row) and $d=10$ (bottom row) depending on the number of iterations. Reported are the mean and standard deviation (vertical lines) over 100 runs.}
    \label{fig:sphere-conv}
\end{figure}

\paragraph*{Complexity}
All tests were performed on an Intel Core i7-10700 with 32 GB memory.
\autoref{fig:sphere-time} shows the execution time depending on the number $N$ of points of each given measure and the number $P$ of slices,
both for a fixed number of 20 iterations without stopping criterion. 
We observe that the PSB algorithm is between 40 and 100 times faster than the SSB algorithm. 
This comes from the fact that the SSB requires to solve an OT problem on the torus $\T$, which is more difficult than on the real line.
In terms of the evolution along $N$, the two algorithms (PSB, SSB) have a complexity of $O(P N(\log(N)+d))$, which is coherent with our observations.
The dependence on $P$ is linear, but experiments with bigger values of $P$ would be required to confirm this.
\begin{figure}[!ht]
    \centering
    \includegraphics[height=4.2cm,trim={28 20 40 40},clip]{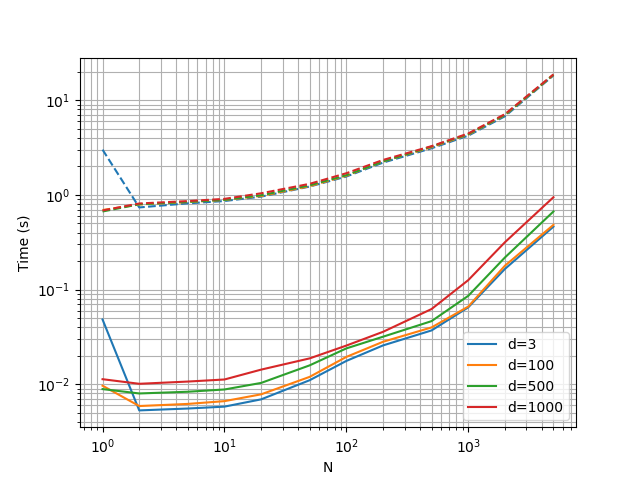}
    \qquad
    \includegraphics[height=4.2cm,trim={28 20 40 40},clip]{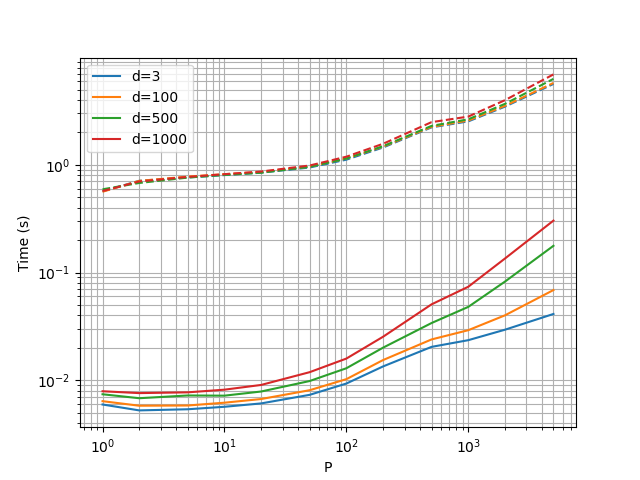}
    \caption{Execution time of the PSB (solid line) and SSB (dashed) algorithms with 20 iterations and different dimensions $d$.\\
    Left: Dependence on the number $N$ of points with $P = 200$ slices.\\
    Right: Dependence on the number $P$ of slices with $N=40$ points.
    \label{fig:sphere-time}}
\end{figure}

The execution times barely grow with increasing dimension $d$ of the sphere.
The only parts of the algorithm that depend on $d$ are the inner products in $\R^d$, which are well parallelized, and the generation of uniform samples on $\S^{d-1}$, which explains the stronger effect for larger number of slices $P$.
While in \autoref{fig:sphere-time} the relative differences are higher for the generally faster PSB alrorithm, the absolute differences are usually larger for the SSB.

\subsection{Fixed-support sliced Wasserstein barycenters}

We test the fixed-support sliced barycenter algorithm from \autoref{sec:disc-fixed} and compare the resulting barycenters with the free-support ones.  As input measures, we take a vMF distribution with $\kappa = 30$ and a ``smiley'' distribution, see \autoref{fig:sphere-fixed}. We use the gradient descent \autoref{alg:fixed} with a grid of $150\times 50$ 
points on the sphere, $P=100$ slices, 500 iterations, the uniform distribution as initialization, and an empirically chosen step size $\tau = 0.005\, (1 + k/20)^{-1/2}$ in the $k$-th iteration. 
We noticed that the results highly depend on the step size.
    
The resulting fixed-support PSB is displayed in \autoref{fig:sphere-fixed}, along with the free-support PSB and SSB from \autoref{sec:disc-free} and the regularized Wasserstein barycenter from PythonOT with regularization parameter 0.05.
For the free-support sliced Wasserstein barycenters (PSB and SSB), the two input measures are sampled $N=200$ points and the resulting barycenters are represented using kernel density estimation \cite{HalWatCab87} with the density of the vMF distributions \eqref{eq:vmf} as kernel function.
We notice that all barycenters look similarly.

\begin{figure}[!ht]
    \centering
    \begin{subfigure}[t]{0.3\textwidth}
        \centering
        \includegraphics[width=\textwidth, trim={90 25 40 40}, clip]{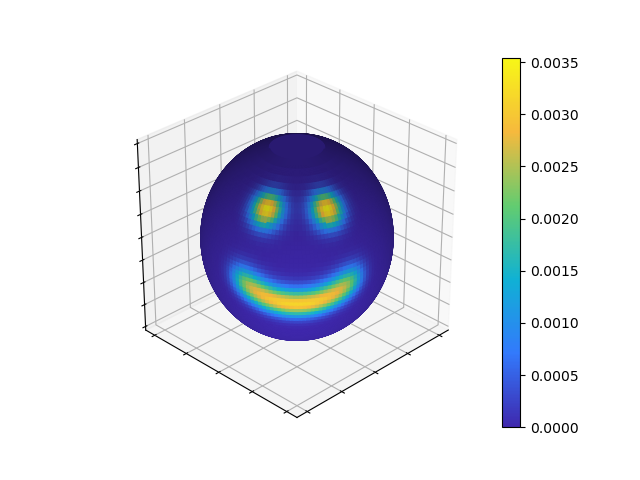} 
        \caption{First input measure}
        \label{fig:sphere-fixed-input1}
    \end{subfigure}\hfill
    \begin{subfigure}[t]{0.3\textwidth}
        \centering
        \includegraphics[width=\textwidth, trim={90 25 40 40}, clip]{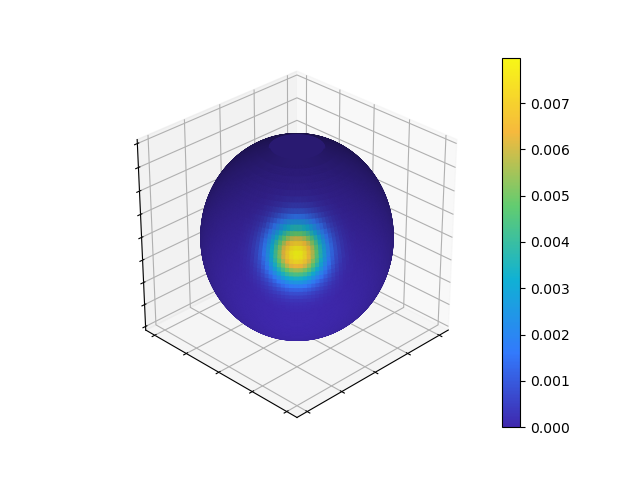} 
        \caption{Second input measure}
        \label{fig:sphere-fixed-input2}
    \end{subfigure}\hfill
    \begin{subfigure}[t]{0.3\textwidth}
        \centering
        \includegraphics[width=\textwidth, trim={90 25 40 40}, clip]{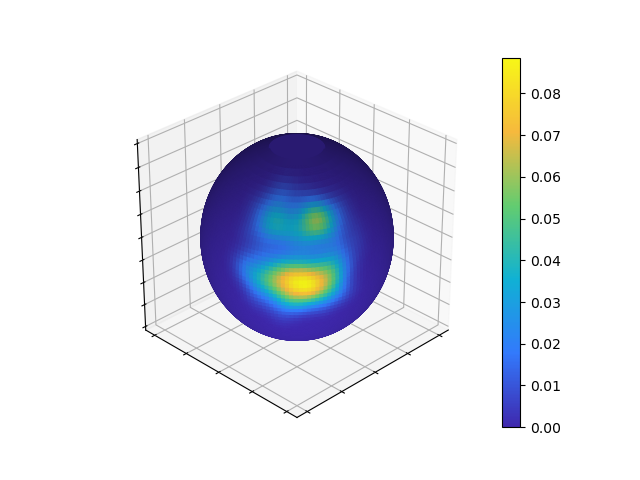} 
        \caption{Density estimation of the free-support PSB (0.05\,s)}
        \label{fig:sphere-fixed-LPSB}
    \end{subfigure}
    \begin{subfigure}[t]{0.3\textwidth}
        \centering
        \includegraphics[width=\textwidth, trim={90 25 40 30}, clip]{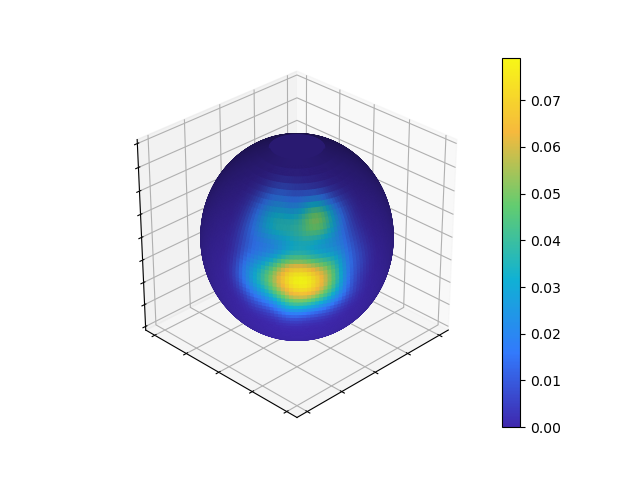} 
        \caption{Density estimation of the free-support SSB (15\,s)}
        \label{fig:sphere-fixed-LSSB}
    \end{subfigure}\hfill
    \begin{subfigure}[t]{0.32\textwidth}
        \centering
        \includegraphics[width=\textwidth, trim={80 25 40 30}, clip]{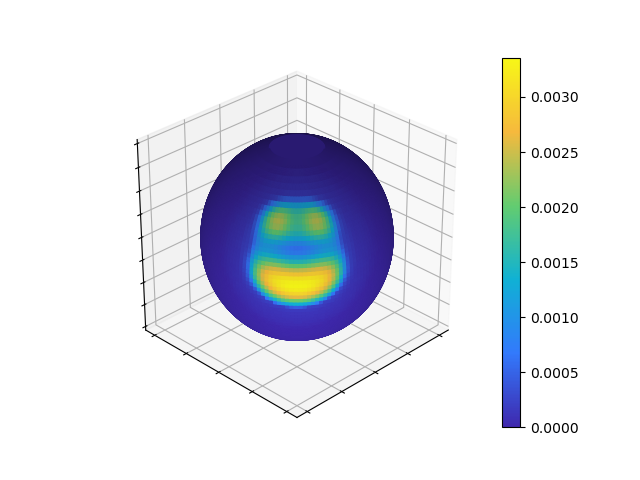} 
        \caption{Fixed-support PSB (107\,s)}
        \label{fig:sphere-fixed-EPSB}
    \end{subfigure}\hfill
    \begin{subfigure}[t]{0.32\textwidth}
        \centering
        \includegraphics[width=\textwidth, trim={80 25 40 30}, clip]{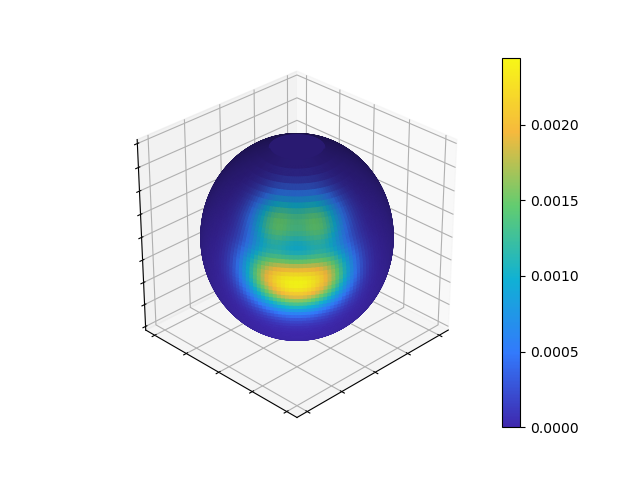} 
        \caption{Regularised Wasserstein barycenter (3.8\,s)}
        \label{fig:sphere-fixed-regwb}
    \end{subfigure}
    \caption{Fixed-support and free-support barycenters between two input measures.}
    \label{fig:sphere-fixed}
\end{figure}

\subsection{Radon Wasserstein barycenters on the sphere}
We compare the Radon PSB $\bary^{\U}_{\S^2}$ of \autoref{sec:bary-radon}
with the Radon SSB $\bary^{\cW}_{\S^2}$.
For the latter, we apply the algorithm \cite{QueBeiSte23}.
Both use the truncation degree $N=120$ in \eqref{eq:disc-svd} and $P=29282$ slices, which equals the number of points on the $121\times242$ grid on $\S^2$.
\autoref{fig:radon-2diracs} shows the barycenters of vMF distributions concentrated at the poles.
As above, the Wasserstein barycenter is computed with the POT library with the regularization parameter 0.05 (or 0.01 for the ``smiley'' test).
The regularized Wasserstein barycenter is somehow ``between'' the Radon PSB and SSB.
Different from \autoref{fig:sphere-shape-comp-2diracs},
the Radon is concentrated on a ring around the equator,
which might be explained by the fact that, other than in \autoref{rem:antipo_diracs}, the measures are not Dirac measures.
Furthermore, the computation of the Radon PSB is much faster than the Radon SSB. 
Moreover, we compare the Radon barycenters of the ``croissant'' shape in \autoref{fig:radon-2crois}
as well as the ``smiley'' in \autoref{fig:radon-smiley}.
Again, the Wasserstein barycenter seems to be between the Radon PSB and SSB.
\begin{figure}[!ht]
    \centering
    \begin{subfigure}[t]{.24\textwidth}
    \includegraphics[width=\textwidth]{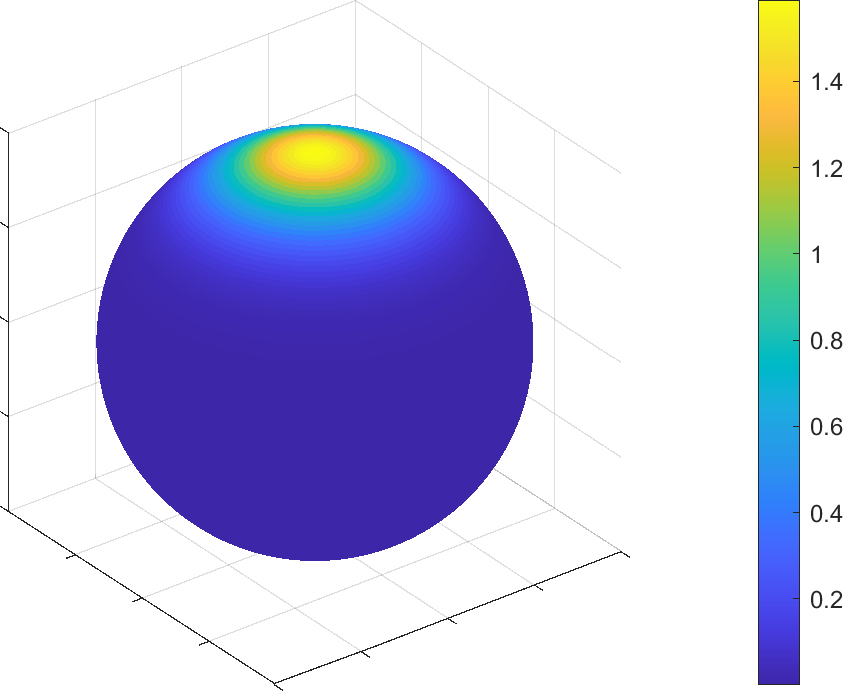}
    \caption{First input distribution}
    \end{subfigure}\hfill
    \begin{subfigure}[t]{.24\textwidth}
    \includegraphics[width=\textwidth]{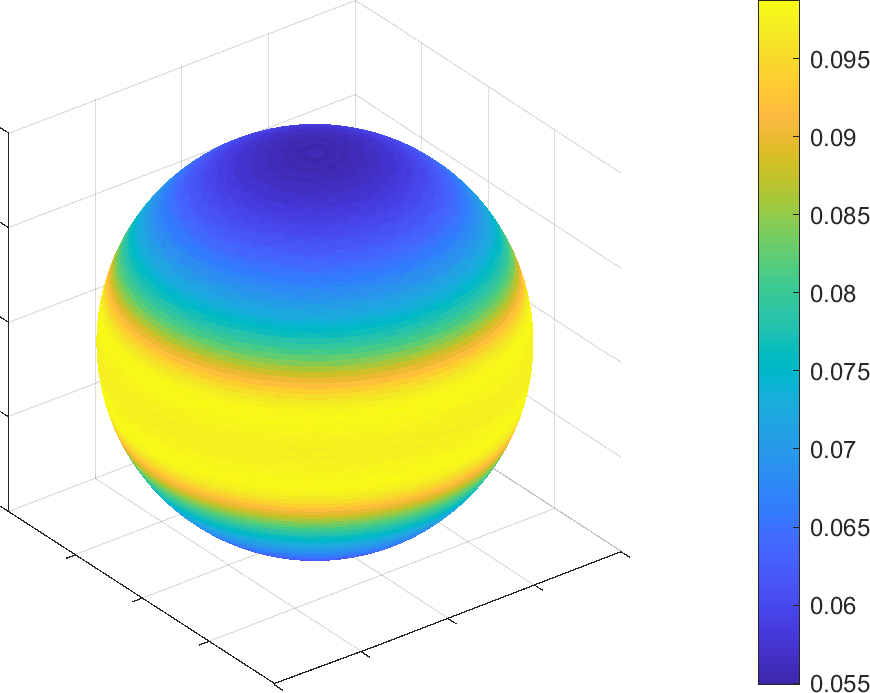}
    \caption{Radon PSB (0.7\,s)}
    \end{subfigure}\hfill
    \begin{subfigure}[t]{.24\textwidth}
    \includegraphics[width=\textwidth]{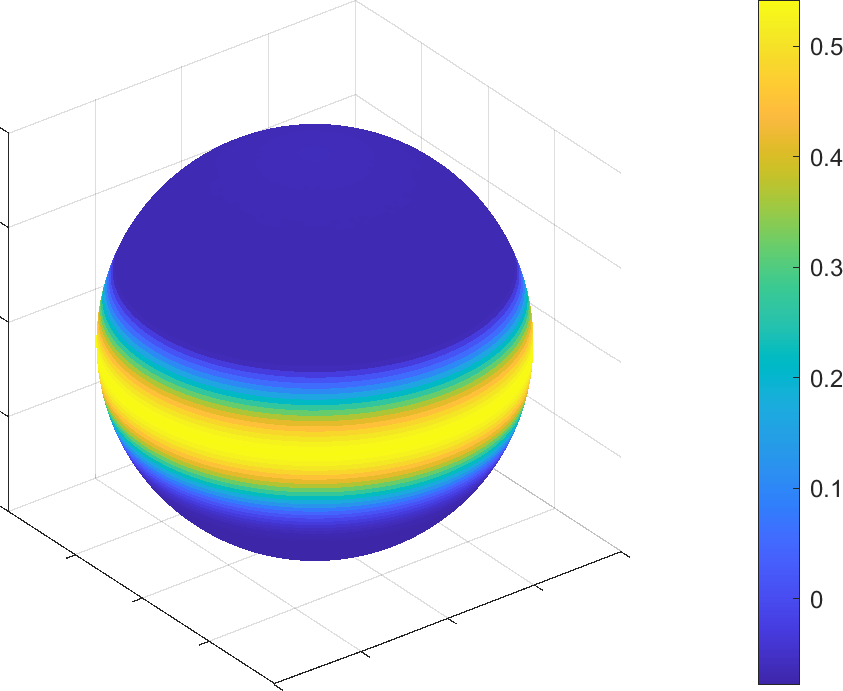}
    \caption{Radon SSB (89\,s)}
    \end{subfigure}\hfill
    \begin{subfigure}[t]{.24\textwidth}
    \includegraphics[width=\textwidth]{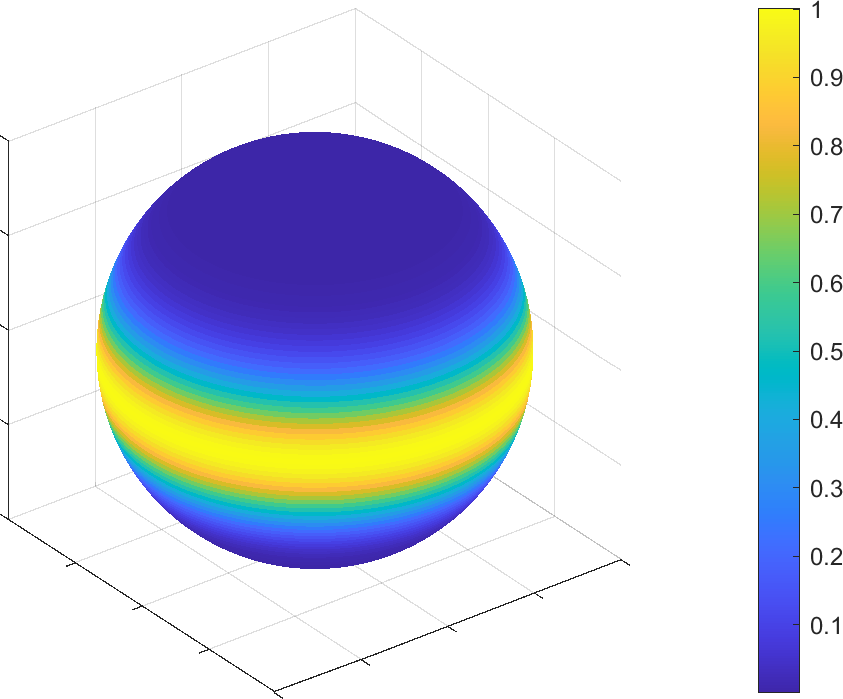}
    \caption{Wasserstein barycenter (364\,s)}
    \end{subfigure}
    \caption{First input measure (the second is the antipodal) and the Radon barycenters.
    }
    \label{fig:radon-2diracs}
\end{figure}

\begin{figure}[!ht]
    \centering
    \begin{subfigure}[t]{.192\textwidth}
    \includegraphics[width=\textwidth]{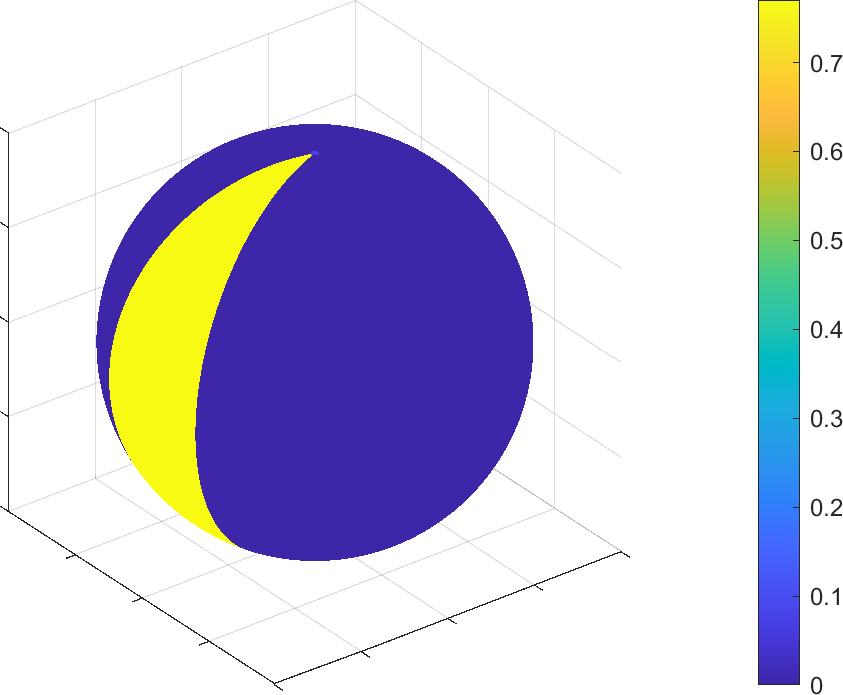}
    \caption{First input distribution}
    \end{subfigure}\hfill
    \begin{subfigure}[t]{.192\textwidth}
    \includegraphics[width=\textwidth]{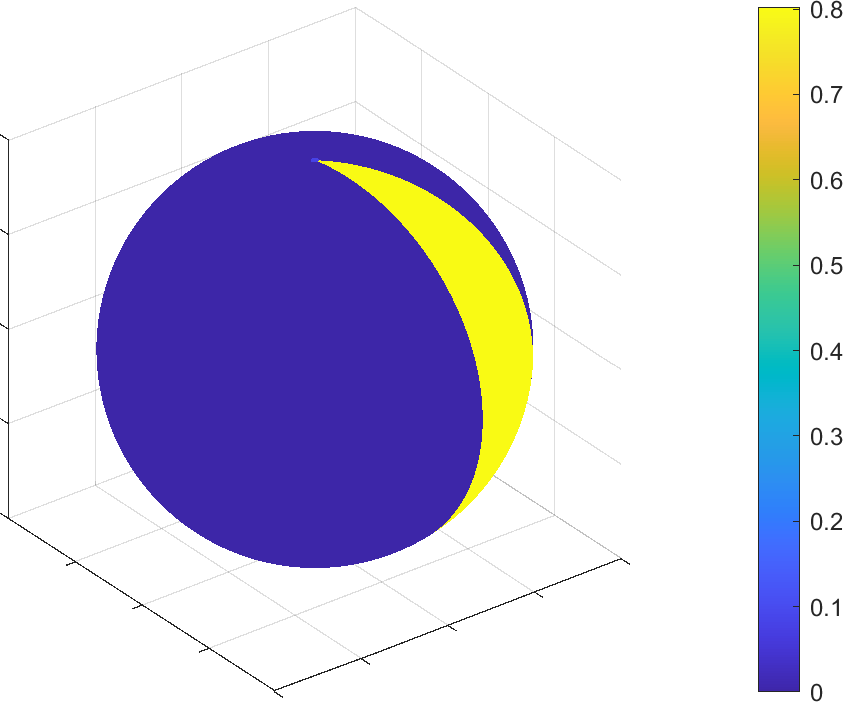}
    \caption{Second input distribution}
    \end{subfigure}\hfill
    \begin{subfigure}[t]{.192\textwidth}
    \includegraphics[width=\textwidth]{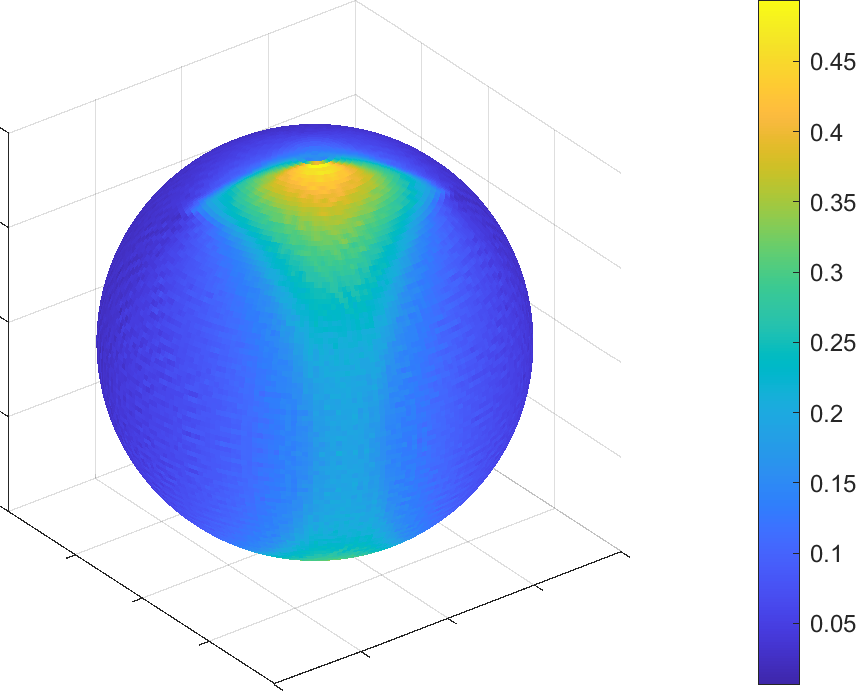}
    \caption{Radon PSB }
    \end{subfigure}\hfill
    \begin{subfigure}[t]{.192\textwidth}
    \includegraphics[width=\textwidth]{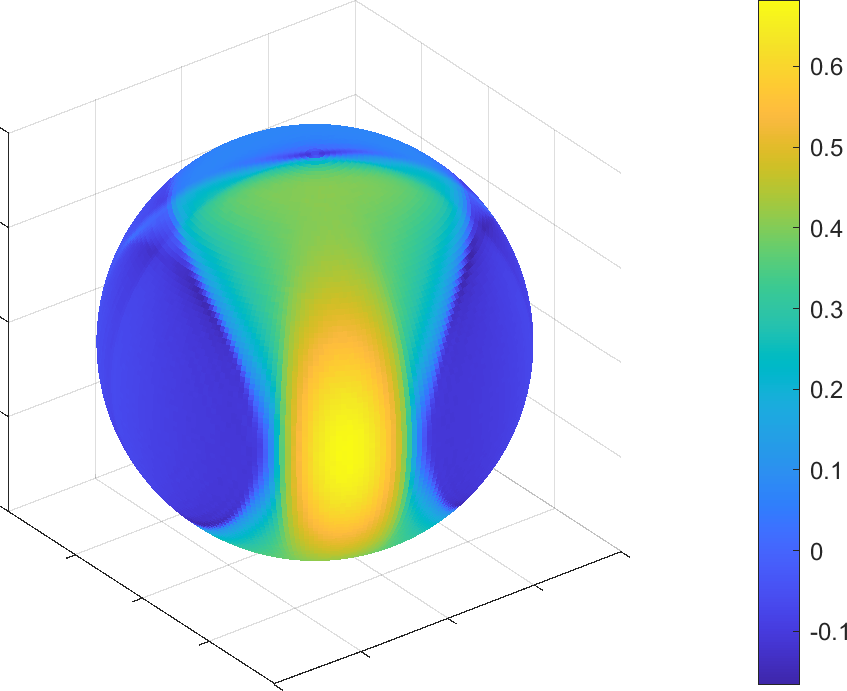}
    \caption{Radon SSB}
    \end{subfigure}\hfill
    \begin{subfigure}[t]{.192\textwidth}
    \includegraphics[width=\textwidth]{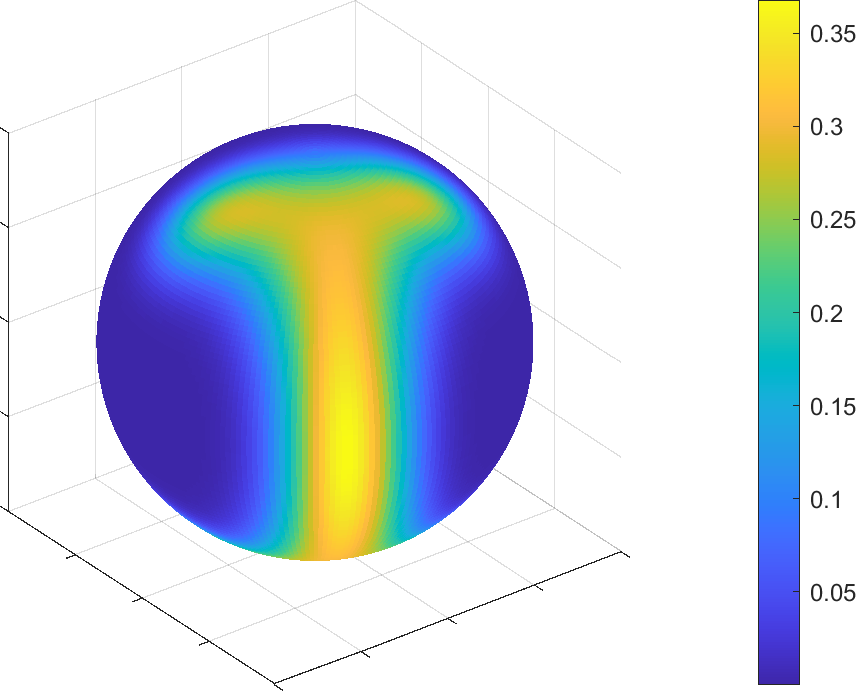}
    \caption{Wasserstein barycenter}
    \end{subfigure}
    \caption{Input measures and their Radon barycenters.
    }
    \label{fig:radon-2crois}
\end{figure} 

\begin{figure}[!ht]
    \centering
    \begin{subfigure}[t]{.192\textwidth}
    \includegraphics[width=\textwidth]{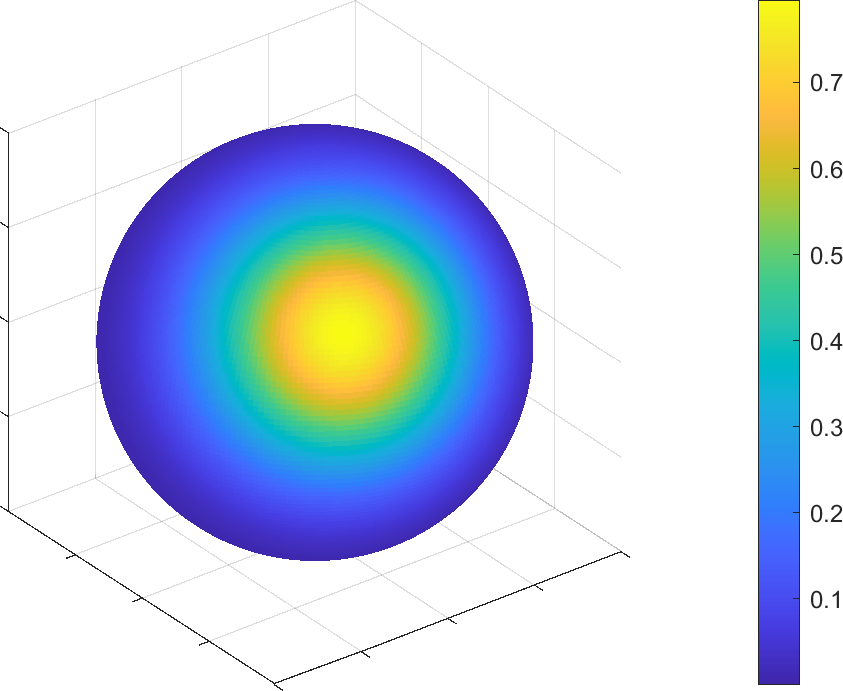}
    \caption{First input distribution}
    \end{subfigure}\hfill
    \begin{subfigure}[t]{.192\textwidth}
    \includegraphics[width=\textwidth]{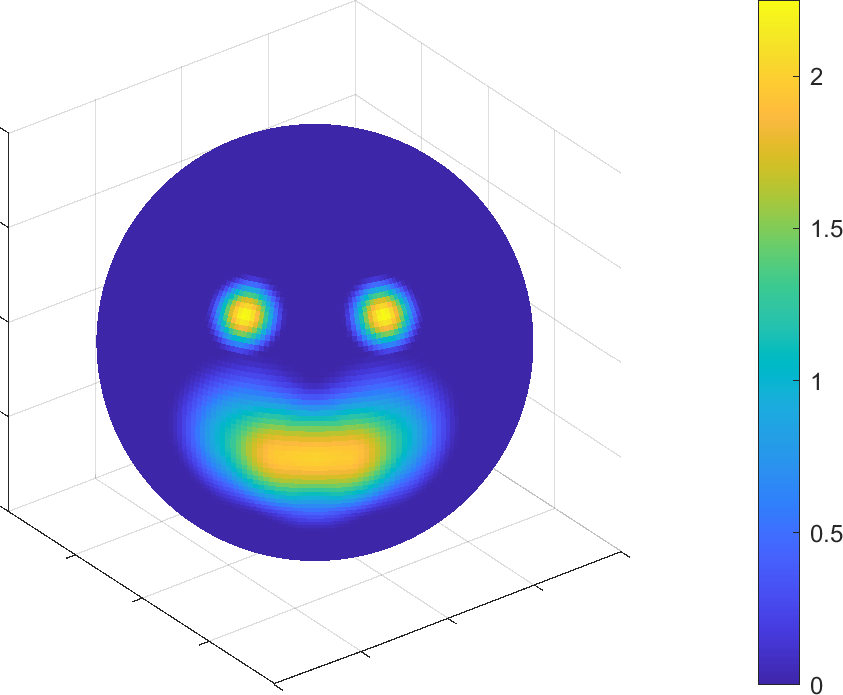}
    \caption{Second input distribution}
    \end{subfigure}\hfill
    \begin{subfigure}[t]{.192\textwidth}
    \includegraphics[width=\textwidth]{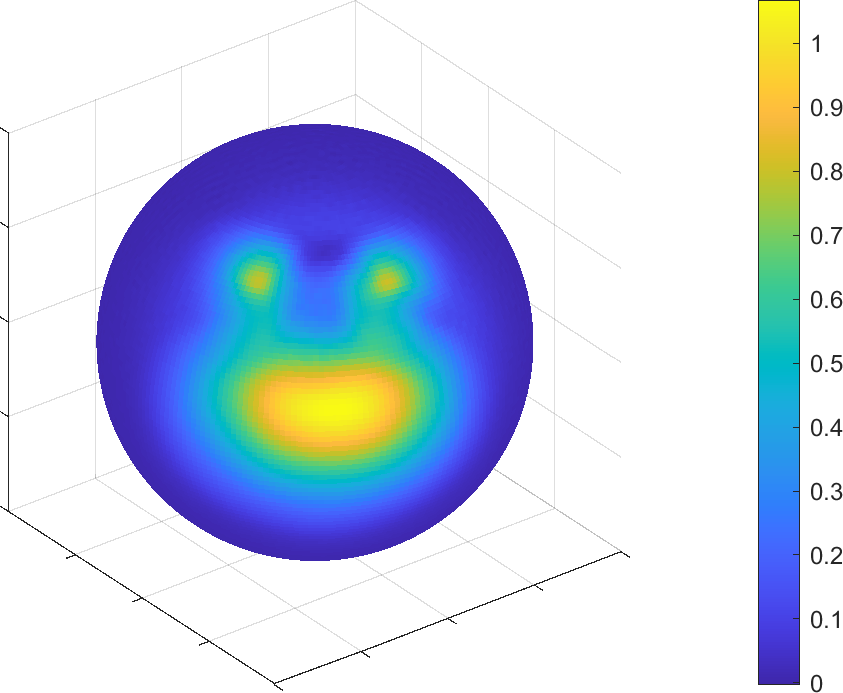}
    \caption{Radon PSB }
    \end{subfigure}\hfill
    \begin{subfigure}[t]{.192\textwidth}
    \includegraphics[width=\textwidth]{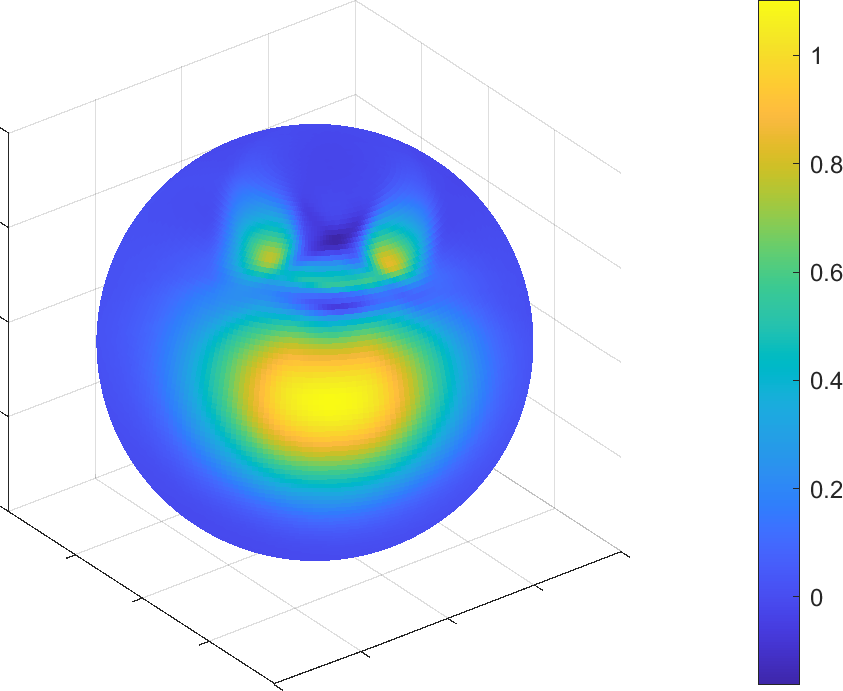}
    \caption{Radon SSB}
    \end{subfigure}\hfill
    \begin{subfigure}[t]{.192\textwidth}
    \includegraphics[width=\textwidth]{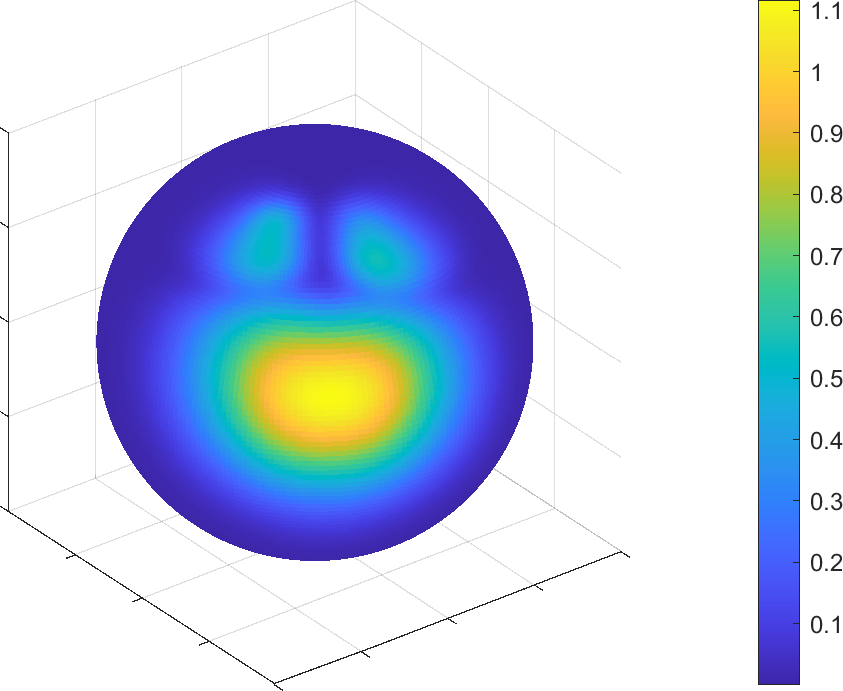}
    \caption{Wasserstein barycenter}
    \end{subfigure}
    \caption{Input measures and their Radon barycenters.
    }
    \label{fig:radon-smiley}
\end{figure}

\subsection{Sliced barycenters on the rotation group}
\label{sec:numerics-SO}

We compute free-support sliced Wasserstein barycenters on $\SO$ with the stochastic gradient descent method outlined in \autoref{sec:disc-so3}.
Similar to the spherical setting, we start with two given measures with $N=100$ points each.
Since there is no other slicing approach on $\SO$, we compare it with a Wasserstein barycenter that is obtained by computing an optimal transport map for the 1-Wasserstein distance \eqref{eq:Wp} using PythonOT and applying the logarithm map to project it to the manifold $\SO$.

We visualize a rotation $Q=\rR_{\bn}(\omega)\in\SO$, see \eqref{eq:axan}, as the point $\tan(\frac\omega4)\, \bn\in\R^3$, where $\omega\in[0,\pi]$ is the angle and $\bn\in\S^2$ is the rotation axis, cf.\ \cite[p.\ 1633]{EhlGraNeuSte21}.
The resulting free-support barycenters between two input measures are depicted in \autoref{fig:so3}.
In (a) and (b), the measures are closer together, the angle between their centers is approximately 92 degree, and notions of barycenters are similar.
In (c) and (d), the input measures are almost opposite to each other with a distance of 164 degrees, then the sliced barycenter is supported on a curve with high clustering between the inputs, and the projected Wasserstein barycenter is stronger localized around two locations.

\begin{figure}[!ht]
    \begin{subfigure}[t]{.24\textwidth}
    \includegraphics[width=\textwidth]{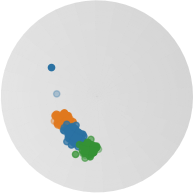}
    \caption{Sliced Wasserstein barycenter}
    \end{subfigure}\hfill
    \begin{subfigure}[t]{.24\textwidth}
    \includegraphics[width=\textwidth]{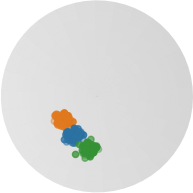}
    \caption{Wasserstein barycenter}
    \end{subfigure}\hfill
    \begin{subfigure}[t]{.24\textwidth}
    \includegraphics[width=\textwidth]{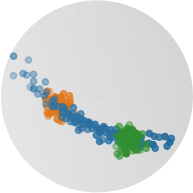}
    \caption{Sliced Wasserstein barycenter}
    \end{subfigure}\hfill
    \begin{subfigure}[t]{.24\textwidth}
    \includegraphics[width=\textwidth]{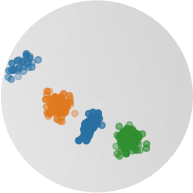}
    \caption{Wasserstein barycenter}
    \end{subfigure}
    \caption{Free support barycenters on $\SO$. Blue points represent the barycenter, while green and orange points represent the two input measures, which are closer together in (a) and (b) or further away in (c) and (d).}
    \label{fig:so3}
\end{figure}

\section{Conclusions} \label{sec:conc}

We investigated a new approach for sliced Wasserstein distance of spherical measures
and proved that this parallel slicing provides a rotational invariant metric on $\P(\S^{d-1})$ that induces the same topology as the Wasserstein distance. 
We provided numerical algorithms for the computation of the respective sliced barycenters, both with free or fixed support,
that are considerably faster than for the semicircular slicing, while producing comparable results in most cases, except when the input measures are highly concentrated around antipodal points.

Extending our method to the rotation group $\SO$,
we proved the metric properties of the proposed sliced Wasserstein distance based on a new Radon transform on $\SO$ and its  singular value decomposition.
An extensive numerical evaluation of the latter transform is planned, but out of the scope of this paper.
Further, it will be interesting to incorporate our  slicing approach into gradient flows on $\mathbb S^{d-1}$ for $d \gg 2$.

\subsubsection*{Acknowledgements}

We gratefully acknowledge the funding by the German Research Foundation (DFG): STE 571/19-1,
project number 495365311, within the Austrian Science Fund (FWF) SFB 10.55776/F68: \enquote{Tomography Across the Scales}.
L.B. acknowledges funding from the TU Berlin, Institute of Mathematics,
during his internship in 2023.
For open access purposes, the author has applied a CC BY public copyright license to any author-accepted manuscript version arising from this submission.

\appendix

\section{Metric properties of the parallelly sliced Wasserstein distance}
\label{sec:proof_SSW}

In the following, we show bounds between the sliced Wasserstein distance $\PSW_p$ and the spherical Wasserstein distance $\WS_p$
by a relation to the Euclidean case \cite[Sect.\ 5.1]{Bon13}.

\begin{lemma} \label{lem:dist-bound}
  The geodesic distance \eqref{eq:Sd-dist} on the sphere and the Euclidean distance are related via
  \begin{equation}  \label{eq:dist-bound}
    \norm{\zb\xi-\zb\eta}
    \le
    d(\zb\xi,\zb\eta)
    \le \frac{\pi}{2} \norm{\zb\xi-\zb\eta}
    ,\qquad \forall \zb\xi,\zb\eta \in \S^{d-1}.
  \end{equation}  
\end{lemma}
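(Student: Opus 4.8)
The plan is to reduce everything to a single elementary identity relating the geodesic distance on $\S^{d-1}$ to the Euclidean chord length, and then to invoke two standard scalar inequalities for the sine function on $[0,\tfrac{\pi}{2}]$; these bounds then feed directly into the comparison between $\PSW_p$ and $\WS_p$.

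First I would write $\theta \coloneqq d(\zb\xi,\zb\eta) = \arccos(\inn{\zb\xi,\zb\eta}) \in [0,\pi]$, so that $\inn{\zb\xi,\zb\eta} = \cos\theta$. Since $\norm{\zb\xi} = \norm{\zb\eta} = 1$, expanding the squared norm gives
\begin{equation*}
  \norm{\zb\xi-\zb\eta}^2 = 2 - 2\inn{\zb\xi,\zb\eta} = 2 - 2\cos\theta = 4\sin^2\!\bigl(\tfrac{\theta}{2}\bigr),
\end{equation*}
and because $\tfrac{\theta}{2} \in [0,\tfrac{\pi}{2}]$ we have $\sin(\theta/2)\ge 0$, hence $\norm{\zb\xi-\zb\eta} = 2\sin(\theta/2)$. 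Substituting $s \coloneqq \theta/2 \in [0,\tfrac{\pi}{2}]$, the asserted chain \eqref{eq:dist-bound} is then seen to be equivalent to
\begin{equation*}
  \tfrac{2}{\pi}\, s \le \sin s \le s, \qquad s \in [0,\tfrac{\pi}{2}].
\end{equation*}

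The upper bound $\sin s \le s$ is immediate (for instance, $s \mapsto s - \sin s$ vanishes at $0$ and has nonnegative derivative $1 - \cos s$), and the lower bound $\sin s \ge \tfrac{2}{\pi}s$ is Jordan's inequality, which follows from concavity of $\sin$ on $[0,\tfrac{\pi}{2}]$: its graph lies above the chord through $(0,0)$ and $(\tfrac{\pi}{2},1)$, whose equation is exactly $s\mapsto \tfrac{2}{\pi}s$. Combining these two bounds with the identity $\norm{\zb\xi-\zb\eta} = 2\sin(\theta/2)$ yields \eqref{eq:dist-bound}. There is essentially no hard step here; the only point requiring a little care is the half-angle bookkeeping — noting that $\theta/2$ never exceeds $\pi/2$ so that the square root is extracted with the correct sign, and that the degenerate cases $\theta \in \{0,\pi\}$ are covered.
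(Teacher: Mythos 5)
Your proof is correct, and it takes a somewhat different (and cleaner) route than the paper. The paper works directly with $x=\inn{\zb\xi,\zb\eta}$ and establishes $\sqrt{2-2x}\le\arccos(x)\le\tfrac{\pi}{2}\sqrt{2-2x}$ by integral comparison: the lower bound comes from $\tfrac{1}{\sqrt{1+t}}\ge\tfrac{1}{\sqrt 2}$ inside the integral $\arccos(x)=\int_x^1(1-t^2)^{-1/2}\,\dx t$, and the upper bound is first proved only on $[0,1]$ via $\tfrac{1}{\sqrt{1-t^2}}\le\tfrac{1}{\sqrt{1-t}}$ and then extended to $[-1,0]$ by a separate convexity argument for $\arccos$. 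You instead rewrite $\norm{\zb\xi-\zb\eta}=2\sin(\theta/2)$ with $\theta=d(\zb\xi,\zb\eta)$ and reduce the whole claim to $\tfrac{2}{\pi}s\le\sin s\le s$ on $[0,\tfrac{\pi}{2}]$, handled by monotonicity of $s-\sin s$ and Jordan's inequality (concavity of $\sin$ on $[0,\tfrac{\pi}{2}]$). The two are of course encoding the same trigonometric facts --- $\sqrt{2-2\cos\theta}=2\sin(\theta/2)$ turns one inequality chain into the other --- but your version avoids the case split at $x=0$ and replaces the two integral estimates with a single, textbook chord argument, so it is arguably the more transparent proof. Both are valid.
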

\begin{proof}
  We first show that
  \begin{equation} \label{eq:acos-bound}
    \sqrt{2-2x}
    \le
    \arccos(x)
    \le
    \frac{\pi}{2} \sqrt{2-2x}
    ,\qquad \forall x\in\II.
  \end{equation}
  For $x\in\II$,
  we have by \cite[4.4.2]{abst}
  \begin{equation} \label{eq:acos-bound-low}
    \arccos(x)
    =
    \int_{x}^{1} \frac{1}{\sqrt{1-t^2}} \d t
    =
    \int_{x}^{1} \frac{1}{\sqrt{1+t}\, \sqrt{1-t}} \d t
    \ge
    \int_{x}^{1} \frac{1}{\sqrt{2-2t}} \d t
    =
    \sqrt{2-2x},
  \end{equation}
  which is the first inequality of \eqref{eq:acos-bound}.
  Analogously, we have for $x\in[0,1]$ that
  $$
  \arccos(x)
  =
  \int_{x}^{1} \frac{1}{\sqrt{1-t^2}} \d t
  \le
  \int_{x}^{1} \frac{1}{\sqrt{1-t}} \d t
  =
  2\sqrt{1-x}.
  $$
  Because $\arccos$ is convex on $[-1,0]$ and $\arccos(-1)=\pi$, 
  we obtain the second inequality of \eqref{eq:acos-bound} for all $x\in[-1,1]$.
  The assertion follows from the fact that 
  $$
  \norm{\zb\xi-\zb\eta}
  =
  \sqrt{\norm{\zb\xi}^2+\norm{\zb\eta}^2-2\inn{\zb\xi,\zb\eta}}
  =
  \sqrt{2-2\inn{\zb\xi,\zb\eta}}
  $$
  and 
  $
  d(\zb\xi,\zb\eta)
  =
  \arccos(\inn{\zb\xi,\zb\eta})
  $
  for all $\zb\xi,\zb\eta\in\S^{d-1}$ 
  by \eqref{eq:Sd-dist}.
\end{proof}

We extend a spherical measure $\mu\in \P(\S^{d-1})$ to a measure $\tilde\mu\in\P(\R^d)$ that is supported on $\S^{d-1}$ by setting 
\begin{equation} \label{eq:meas_SR}
\tilde\mu(B) \coloneqq \mu(B\cap\S^{d-1}),
\qquad \forall 
B\in\cB(\R^d).
\end{equation}

\begin{lemma} \label{thm:W-Euclidean}
  Let $\mu,\nu\in \P(\S^{d-1})$ be spherical measures with extensions $\tilde\mu,\tilde\nu\in\P(\R^d)$.
  Then the following inequalities between the spherical Wasserstein distance $\WS_p(\mu,\nu)$ and the Euclidean Wasserstein distance $\WS_p(\tilde\mu,\tilde\nu)$ on $\R^d$ holds:
  $$
  \WS_p(\tilde\mu,\tilde\nu)
  \le
  \WS_p(\mu,\nu)
  \le
  \frac{\pi}{2} \WS_p(\tilde\mu,\tilde\nu).
  $$
\end{lemma}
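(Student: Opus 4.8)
The plan is to exploit the natural bijection between transport plans coupling $\mu,\nu$ on $\S^{d-1}\times\S^{d-1}$ and transport plans coupling the extended measures $\tilde\mu,\tilde\nu$ on $\R^d\times\R^d$, and then to transfer the two-sided pointwise estimate of \autoref{lem:dist-bound} under the integral sign against a fixed plan.

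First I would record the bijection $\Pi(\mu,\nu)\leftrightarrow\Pi(\tilde\mu,\tilde\nu)$. If $\gamma\in\Pi(\tilde\mu,\tilde\nu)$, then its first marginal $\tilde\mu$ is concentrated on $\S^{d-1}$, so $\gamma$ is concentrated on $\S^{d-1}\times\R^d$; by the same argument with the second marginal, $\gamma$ is concentrated on $\S^{d-1}\times\S^{d-1}$. Conversely, every $\pi\in\Pi(\mu,\nu)$ extends by \eqref{eq:meas_SR} (applied on the product) to an element of $\Pi(\tilde\mu,\tilde\nu)$. Under this identification a plan $\pi$ satisfies $\int_{\R^d\times\R^d}\norm{x-y}^p\d\pi = \int_{\S^{d-1}\times\S^{d-1}}\norm{\zb\xi-\zb\eta}^p\d\pi$ and $\int d^p\d\pi = \int_{\S^{d-1}\times\S^{d-1}} d^p(\zb\xi,\zb\eta)\d\pi$, and on the common support the integrands are comparable by \eqref{eq:dist-bound}.

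For the lower bound, take an optimal plan $\pi$ for $\WS_p(\mu,\nu)$ (existence is standard on the compact space $\S^{d-1}$: the cost is continuous and $\Pi(\mu,\nu)$ is tight), regard it as $\tilde\pi\in\Pi(\tilde\mu,\tilde\nu)$, and use $\norm{x-y}\le d(\zb\xi,\zb\eta)$ from \eqref{eq:dist-bound} to get $\WS_p^p(\tilde\mu,\tilde\nu)\le\int\norm{x-y}^p\d\tilde\pi\le\int d^p\d\pi=\WS_p^p(\mu,\nu)$. For the upper bound, take an optimal plan $\gamma$ for $\WS_p(\tilde\mu,\tilde\nu)$ — which exists since $\tilde\mu,\tilde\nu$ are compactly supported — note it is concentrated on $\S^{d-1}\times\S^{d-1}$, regard it as a plan for $(\mu,\nu)$, and use $d(\zb\xi,\zb\eta)\le\tfrac\pi2\norm{\zb\xi-\zb\eta}$: $\WS_p^p(\mu,\nu)\le\int d^p\d\gamma\le(\tfrac\pi2)^p\int\norm{x-y}^p\d\gamma=(\tfrac\pi2)^p\WS_p^p(\tilde\mu,\tilde\nu)$. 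Taking $p$-th roots in both chains gives the assertion. The only point needing genuine care is the claim that every coupling of $\tilde\mu$ and $\tilde\nu$ is automatically concentrated on $\S^{d-1}\times\S^{d-1}$; once that is in place, the rest is a direct transfer of the pointwise bound \eqref{eq:dist-bound} through a fixed measure.
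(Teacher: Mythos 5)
Your proposal is correct and follows essentially the same route as the paper: identify $\Pi(\mu,\nu)$ with $\Pi(\tilde\mu,\tilde\nu)$ (since any coupling of $\tilde\mu,\tilde\nu$ is automatically concentrated on $\S^{d-1}\times\S^{d-1}$), then transfer the pointwise comparison of \autoref{lem:dist-bound} through the plans. The paper phrases the transfer directly at the level of the infimum rather than via optimal plans, but that is only a cosmetic difference.
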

\begin{proof}
The Euclidean Wasserstein distance on $\X=\R^d$ is given in \eqref{eq:Wp} with $d(\zb x,\zb y) = \norm{\zb x- \zb y}$.
Any transport plan $\tilde\gamma \in \Pi(\tilde\mu,\tilde\nu)$ is supported only on  $\S^{d-1}\times\S^{d-1}$.
Hence, its restriction to $\S^{d-1}\times\S^{d-1}$ yields a transport plan in $\Pi(\mu,\nu)$.
Conversely, a transport plan $\gamma \in \Pi(\mu,\nu)$ can be extended to $\R^d$ by setting it to zero outside the sphere.
Hence, we have
\begin{equation*}
  \WS_p^p(\mu, \nu)
  =
  \inf_{\gamma\in\Pi(\mu, \nu)}
  \int_{\S^{d-1}\times\S^{d-1}} d(\bx,\by) \d\gamma(\bx,\by)
  =
  \inf_{\tilde\gamma\in\Pi(\tilde\mu,\tilde\nu)}
  \int_{\R^{d}\times\R^{d}} d(\bx,\by) \d\tilde\gamma(\bx,\by).
\end{equation*}
The claim follows by \autoref{lem:dist-bound}.
\end{proof}

\begin{proof}[Proof of \autoref{thm:SSW}]
We first show \eqref{eq:W-SW-equiv} via applying the respective result from the Euclidean space.
We briefly recall sliced OT on $\R^d$, see \cite{BonRabPeyPfi15},
with the slicing operator
$
  \cS^{\R^d}_{\bpsi} 
  \colon \R^d\to\R$, 
$\bx \mapsto \inn{\bpsi,\bx}$
for
$\bpsi\in\S^{d-1}$
and
the {sliced Wasserstein distance} 
\begin{equation} \label{eq:RSW}
  \RSW_p^p(\mu,\nu)
  \coloneqq
  \int_{\S^{d-1}} \WS_p^p\left((\cS^{\R^d}_{\bpsi})_\# \mu,(\cS^{\R^{d}}_{\bpsi})_\# \nu\right) \d u_{\S^{d-1}} (\bpsi),
  \qquad \mu,\nu\in\P(\R^d).
\end{equation}
Comparing the Euclidean sliced Wasserstein distance $\RSW$ with the spherical sliced Wasserstein distance \eqref{eq:SW}, we see that 
\begin{equation} \label{eq:SSW-RSW}
\PSW_p(\mu,\nu)
=
\mathrm{RSW}_p(\tilde\mu,\tilde\nu).
\end{equation}
By \cite[thm.\ 5.1.5]{Bon13},
there exist constants $\tilde c_{d,p}, \tilde C_{d,p}$ such that
for all measures $\tilde\mu,\tilde\nu \in \P(\R^d)$ which are supported in a ball of fixed radius $R>0$ we have
\begin{equation}
  \mathrm{RSW}_p(\tilde\mu,\tilde\nu)
  \le
  \tilde c_{d,p} \WS_p(\tilde\mu,\tilde\nu)  
  \le
  \tilde C_{d,p} R^{1-\frac{1}{p(d+1)}}\,
  \mathrm{RSW}_p(\tilde\mu,\tilde\nu)^{\frac{1}{p(d+1)}}.
\end{equation}
As $\tilde\nu$ and $\tilde\mu$ are by construction supported in a ball of radius $1+\varepsilon$ for any $\varepsilon>0$,
the validity of \eqref{eq:W-SW-equiv} follows by invoking \eqref{eq:SSW-RSW} and \autoref{thm:W-Euclidean}.

Next we prove the metric properties.
The symmetry and the triangle inequality follow from
the corresponding properties of the Wasserstein distance
and the $p$-norm on $\S^{d-1}$.
The positive definiteness and the equivalence to the spherical Wasserstein distance follow from \eqref{eq:W-SW-equiv}.
The rotational invariance of $\PSW$ follows since $\U$ is rotationally invariant.
\end{proof}

\section{Proofs from \autoref{sec:SO}}
\label{sec:proof_T}

\begin{proof}[Proof of \autoref{thm:svd}]
  Let $n\in\NN$ and $j,k\in\{-n,\dots,n\}$.
  Using the product identity \cite[cor.\ 2.11]{hiediss07}
  \begin{equation} \label{eq:D_prod}
  D_n^{j,k}(\bP\bQ) 
  = 
  \sum_{\ell=-n}^{n} D_n^{j,\ell}(\bP) D_n^{\ell,k}(\bQ)
  \qquad \forall \bP,\bQ\in\SO,
  \end{equation}
  we have 
  \begin{equation*}
    \cT D_n^{j, k}(\bQ, \omega)
    =
    \frac{1}{4\pi^2}(1-\cos(\omega))\,
    \sum_{\ell=-n}^{n}
    D_n^{j, \ell}(\bQ)
    \int_{\S^2}
    D_n^{\ell, k}(\rR_{\bxi}(\omega))
    \d\sigma_{\S^2}(\bxi).
  \end{equation*}
  We write $\bxi=\sph(\varphi,\vartheta)$ with the spherical coordinates \eqref{eq:sph}.
  Since
  \begin{equation}
    \rR_{\sph(\varphi,\vartheta)}(\omega)
    =
    \rR_{\be^3}(\varphi)
    \rR_{\be^2}(\vartheta)
    \rR_{\be^3}(\varphi)
    \rR_{\be^3}(\omega)
    \rR_{\be^2}(-\vartheta)
    \rR_{\be^3}(-\varphi)
    =
    \eul(\varphi,\vartheta,0)
    \eul(\omega,-\vartheta,-\varphi),
  \end{equation}
  we have by \eqref{eq:D} and \eqref{eq:D_prod}
  \begin{equation} \label{eq:D_axan}
    D_n^{j,k}(\rR_{\sph(\varphi,\vartheta)}(\omega))
    =
    \sum_{\ell=-n}^{n}
    D_n^{j,\ell}(\eul(\varphi,\vartheta,0))\,
    \e^{-\i \ell \omega}\,
    D_n^{\ell,k}(\eul(0,-\vartheta,-\varphi)),
  \end{equation}
  cf.\ \cite[§~4.5]{Varsha88}.
  Hence, we obtain 
  \begin{multline*}
    \cT D_n^{j,k}(\bQ,\omega)
    =
    \frac{1}{4\pi^2}(1-\cos(\omega))
    \\
    \sum_{\ell=-n}^{n}
    D_n^{j,\ell}(\bQ)
    \sum_{m=-n}^{n}
    \int_{0}^{\pi} \int_{\T}
    D_n^{\ell,m}(\eul(\varphi,\vartheta,0))
    \e^{-\i m \omega}
    D_n^{m,k}(\eul(0,-\vartheta,-\varphi))
    \d\varphi
    \sin(\vartheta) \d\vartheta 
    .
  \end{multline*}
  With the symmetry 
  $
  D_n^{m,k}(\bQ)
  =
  \overline{D_n^{k,m}(\bQ^\top)}
  $
  and \eqref{eq:D},
  we see that
  \begin{multline*}
    \cT D_n^{j,k}(\bQ,\omega)
    =
    \frac{1}{4\pi^2}(1-\cos(\omega))
    \\
    \sum_{\ell=-n}^{n}
    D_n^{j,\ell}(\bQ)
    \sum_{m=-n}^{n}
    \e^{-\i m \omega}
    \int_{0}^{\pi} \int_{\T}
    \e^{-\i \ell\varphi}
    d_n^{\ell,m}(\cos(\vartheta))
    d_n^{k,m}(\cos(\vartheta))
    \e^{\i m \varphi}
    \d\varphi
    \sin(\vartheta) \d\vartheta 
    .
  \end{multline*}
  With the orthogonality of the exponentials and the d-functions in \eqref{eq:d_ortho},
  we obtain
  \begin{equation*}
    \cT D_n^{j, k}(\bQ, \omega)
    =
    (1-\cos(\omega))\,
    \frac{1}{(2n+1)\pi}
    D_n^{j, k}(\bQ)
    \sum_{\ell=-n}^{n}
    \e^{-\i \ell \omega}
    .
  \end{equation*}
  The expansion relation of the Dirichlet kernel 
  \begin{equation*}
    \sum_{\ell=-n}^{n}
    \e^{-\i \ell \omega}
    =
    \frac{\sin((n+\frac12) \omega)}{\sin(\frac\omega2)}
  \end{equation*}
  and the half angle formula 
  $
  (1-\cos(\omega))
  =
  2\sin(\omega/2)^2 
  $ 
  yield
  \begin{equation*}
    \cT D_n^{j, k}(\bQ, \omega)
    =
    \frac{2}{(2n+1)\pi}
    D_n^{j, k}(\bQ)\,
    \sin\left((n+\tfrac12) \omega\right)\,
    \sin(\tfrac{\omega}{2})
    ,
  \end{equation*}
  which implies \eqref{eq:svd}.
  The orthogonality of $F_n^{j,k}$ follows from the orthonormality \eqref{eq:D_ortho} of the rotational harmonics $\widetilde D_n^{j,k}$.
  Using the identity $(\sin(\frac\omega2))^2 = (1+\cos(\omega))/2$ and the orthogonality of the cosine, we obtain
  \begin{equation*}
    \int_{0}^{\pi}
    \left(\sin\left((n+\tfrac12) \omega\right)\right)^2\,
    \left(\sin(\tfrac\omega2)\right)^2
    \d\omega
    =
    \begin{cases}
      \pi/4
      , & n \in\N, \\
      3\pi/8
      , & n=0.
    \end{cases}\qedhere
  \end{equation*}
\end{proof}

\begin{proof}[Proof of \autoref{thm:T_inj}]
This proof uses a similar structure as \cite[Thm.\ 3.7]{QueBeiSte23}.
Let $\mu,\nu \in \M(\SO)$ such that $\cT\mu=\cT\nu$. 
For $g\in C(\SO\times \II)$, we have by \autoref{prop:V-as-adj} 
\begin{equation}
  \inn{\mu, \cT^*g}
  =
  \inn{\nu, \cT^*g}.
\end{equation}
We show that $\{\cT^*g: g\in C(\SO\times \II)\}$ is a dense subset of $C(\SO)$, which implies $\mu=\nu$.

The {Sobolev space} $H^s(\SO)$ with $s\ge0$ 
is defined as the completion of $C^\infty(\SO)$ with respect to the Sobolev norm
\begin{equation} \label{eq:SO_Sobolev}
  \norm{f}_{H^s(\SO)}^2
  \coloneqq
  \sum_{n=0}^\infty \left( n+\tfrac12\right)^{2s} \sum_{j,k=-n}^{n}
  \frac{8\pi^2}{2n+1} | \langle f, D_n^{j,k}\rangle|^2.
\end{equation}
Let $s>2$,
then $H^s(\SO)$ is dense in $C(\SO)$, cf.\ \cite[Lem.\ 2.22]{hiediss07}.
Let $f\in H^s(\SO)$.
Since $\cT$ is injective on $L^2(\SO)$ by \autoref{thm:svd},
we have 
$f = \cT^* g$
if and only if
$\cT f = \cT\cT^* g$.
In the following, we show that 
$$
g
\coloneqq
(\cT\cT^*)^{-1} \cT f
$$
is in $C(\SO\times [0,\pi])$,
then we obtain $f = \cT^* g$, which shows that $H^s(\SO)\subset \cT^*(C(\SO\times[0,\pi])$ and therefore the assertion.

Since $\cT^*$ has the same singular functions as $\cT$ and the conjugate singular values,
we obtain by the singular value decomposition \eqref{eq:svd} that
\begin{equation} \label{eq:TT}
  (\cT \cT^*)^{-1} \cT f
  =
  \sum_{n=0}^{\infty} \sum_{j,k=-n}^{n}
  \frac{1}{\lambda_n^{\cT}}\, \inn{f, D_n^{j,k}}_{L^2(\SO)}\, F_n^{j,k}.
\end{equation}
We want to show that the right-hand side of \eqref{eq:TT} converges uniformly on $C(\SO\times [0,\pi])$.
Let $(\bQ,\omega) \in \SO\times [0,\pi]$ and $N\in\N$.
Inserting $\lambda_n^{\cT}$ from \autoref{thm:svd}, we have
\begin{align} 
  &
  \abs{\sum_{n=0}^{\infty} \sum_{j,k=-n}^{n}
  \frac{1}{\lambda_n^{\cT}}\, \inn{f, D_n^{j,k}}_{L^2(\SO)}\, F_n^{j,k}(\bQ,\omega)
  -\sum_{n=0}^{N-1} \sum_{j,k=-n}^{n}
  \frac{1}{\lambda_n^{\cT}}\, \inn{f, D_n^{j,k}}_{L^2(\SO)}\, F_n^{j,k}(\bQ,\omega)
  }
  \\
  &\le
  \frac12 \sum_{n=N}^{\infty} \sum_{j,k=-n}^{n}
  \left(n+\tfrac12\right)\, \abs{\inn{f, D_n^{j,k}}_{L^2(\S^2)}} \abs{\widetilde D_n^{j,k}(\bQ)}
  \\
  &\le
  \frac12 \sqrt{\sum_{n=N}^{\infty} \sum_{j,k=-n}^{n}
  \left(n+\tfrac12\right)^{2s}\, \abs{\inn{f, D_n^{j,k}}_{L^2(\S^2)}}^2}
  \sqrt{\sum_{n=N}^{\infty} 
  \left(n+\tfrac12\right)^{2-2s} \sum_{j,k=-n}^{n}\abs{\widetilde D_n^{j,k}(\bQ)} },
\end{align}
where we made use of the Cauchy--Schwarz inequality.
In the last equation,
the first part is bounded by the Sobolev norm \eqref{eq:SO_Sobolev}.
For the second part, the addition theorem \cite[Thm.\ 2.14]{hiediss07} yields
\begin{equation}
  \sum_{n=N}^{\infty} 
  \left(n+\tfrac12\right)^{2-2s}
  \sum_{j,k=-n}^{n} \abs{\widetilde D_n^{j,k}(\bQ)}
  =
  \sum_{n=N}^{\infty}
  \left(n+\tfrac12\right)^{2-2s}
  \frac{(2n+1)(n+1)}{8\pi^2}
  <\infty
\end{equation}
since $s>2$.
Hence, the right-hand side of \eqref{eq:TT}
converges uniformly to a continuous function on $\SO\times [0,\pi]$,
which finally implies that $g$ is continuous.
\end{proof}

\section{Relation of the rotation group with the 3-sphere}
\label{sec:SO-S3}

We show a relation of the sliced Wasserstein distance \eqref{eq:SW_SO} with an analogue of $\PSW$ on the sphere $\S^3$,
making use of the representation of $\SO$ via unit quaternions, see \cite[§ 2.6]{Mor04}.
The algebra of quaternions $\H$ consists of vectors $\bq = (q_0,q_1,q_2,q_3) = (q_0,\bq')\in\R^4$,
where $\bq'=(q_1,q_2,q_3)$ is called the vector part,
with the standard addition and the multiplication 
\begin{equation} \label{eq:H-mult}
\bq\diamond\br
\coloneqq
(r_0q_0 - \bq' \cdot\br', q_0\br' + r_0\bq' + \bq'\times\br'),
\end{equation}
where $\cdot$ is the scalar product and $\times$ is the cross product in $\R^3$.
The unit quaternions $\{q\in\H\mid q_0^2+q_1^2+q_2^2+q_3^2=1\}$ can be identified with $\S^3$.
The inverse of $\bq\in\S^3$ with respect to $\diamond$ is $\bar\bq = (q_0,-q_1,-q_2,-q_3)$.
The map 
\begin{equation} \label{eq:phi}
\phi\colon \S^3\to\SO,\quad 
\bq \mapsto \rR_{{\bq'}/{\sqrt{1-q_0^2}}}(2\arccos(q_0))
\end{equation}
is surjective and satisfies $\phi^{-1}(\phi(\bq))=\{\bq,-\bq\}$ for all $\bq\in\S^3$.
It is a homomorphism in the sense that $\phi(\bq\diamond\br) = \phi(\bq)\phi(\br)$.
By \cite[(2.6)]{hiediss07}, the integral on $\SO$ is transformed via
\begin{equation} \label{eq:SO_int2}
\begin{split}
    \int_{\SO} f(\bQ) \d \sigma_{\SO}(\bQ)
    &=
    \int_{-1}^{1} f\circ\phi(q_0,\bq') 4 \sqrt{1-q_0^2} \d q_0 \d\sigma_{\S^2}\left(\sqrt{1-q_0^2}\bq' \right)
    \\&\overset{\eqref{eq:Sd_measure}}{=}
    4 \int_{\S^3} f\circ\phi(\bq) \d\sigma_{\S^3}(\bq).
    \end{split}
\end{equation}

We denote the set of even probability measures on $\SO$ by
$$
\Pe(\S^3) \coloneqq
\{\mu\in\P(\S^3) \mid \mu(B) = \mu(-B) \,\forall B\in\cB(\S^3)\}.
$$

\begin{theorem} \label{thm:SOSW-S3}
    Let $\mu,\nu\in \P(\SO)$ and $p\in[1,\infty)$.
    We define 
    $
    c\colon \II \to  [0,\pi],
    $
    $
    c(t) \coloneqq 2\arccos \abs{t}.
    $
    Then
    \begin{equation} \label{eq:SOSW-S3}
    \SOSW_p^p(\mu,\nu)
    =
    \int_{\S^3}
    \WS_p^p \left(c_\#\U_{\bq}(\mu\circ\phi), c_\#\U_{\bq}(\nu \circ \phi)\right) \d u_{\S^3}(\bq).
    \end{equation}
\end{theorem}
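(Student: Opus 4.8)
The plan is to transport the whole identity to the $3$-sphere through the two-fold covering $\phi\colon\S^3\to\SO$ of \eqref{eq:phi} and to recognize that, under this covering, the slicing operator $d_{\bQ}$ on $\SO$ becomes exactly a fixed monotone reparameterization of the parallel slicing operator $\cS_{\bq}(\bxi)=\inn{\bxi,\bq}$ on $\S^3$. Concretely, the crux of the matter is the pointwise identity
\begin{equation*}
  d_{\phi(\bq)}\bigl(\phi(\bp)\bigr)
  = 2\arccos\abs{\inn{\bp,\bq}}
  = c\bigl(\cS_{\bq}(\bp)\bigr),
  \qquad \forall\,\bp,\bq\in\S^3 .
\end{equation*}

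First I would prove this identity. Since $\phi$ is a group homomorphism with $\phi((1,0,0,0))=I$ and $\bar\bq\diamond\bq=(1,0,0,0)$, we have $\phi(\bq)^\top=\phi(\bq)^{-1}=\phi(\bar\bq)$, hence $\phi(\bq)^\top\phi(\bp)=\phi(\bar\bq\diamond\bp)$; a one-line computation with the quaternion product \eqref{eq:H-mult} shows that the scalar part of $\bar\bq\diamond\bp$ is precisely the Euclidean inner product $\inn{\bp,\bq}$ of $\R^4$. From the axis--angle form \eqref{eq:axan} one reads off $\trace(\rR_{\bn}(\omega))=1+2\cos\omega$, so $\trace(\phi(\br))=1+2\cos(2\arccos r_0)=4r_0^2-1$ for every $\br\in\S^3$. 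Substituting $\br=\bar\bq\diamond\bp$ into $\angle(\bA)=\arccos\frac{\trace(\bA)-1}{2}$ gives $\angle(\phi(\bar\bq\diamond\bp))=\arccos(2\inn{\bp,\bq}^2-1)=2\arccos\abs{\inn{\bp,\bq}}$, which is the claimed identity because $c(t)=2\arccos\abs t$.

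Next I would pass to measures. For $\mu\in\P(\SO)$ let $\mu\circ\phi$ denote a lift $\tilde\mu\in\P(\S^3)$ with $\phi_\#\tilde\mu=\mu$ (for instance its unique even lift in $\Pe(\S^3)$; the argument will not depend on the choice). The identity above reads $c\circ\cS_{\bq}=d_{\phi(\bq)}\circ\phi$ as maps $\S^3\to[0,\pi]$, so pushing $\tilde\mu$ forward yields $c_\#\U_{\bq}(\mu\circ\phi)=(d_{\phi(\bq)})_\#\phi_\#\tilde\mu=(d_{\phi(\bq)})_\#\mu=\cT_{\phi(\bq)}\mu$, and likewise for $\nu$; hence the integrand on the right-hand side of \eqref{eq:SOSW-S3} equals $\WS_p^p(\cT_{\phi(\bq)}\mu,\cT_{\phi(\bq)}\nu)$ for every $\bq\in\S^3$. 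It then remains to change variables $\bQ=\phi(\bq)$ in \eqref{eq:SW_SO}: applying \eqref{eq:SO_int2} to $h(\bQ)\coloneqq\WS_p^p(\cT_{\bQ}\mu,\cT_{\bQ}\nu)$, and using $\abs{\S^3}=2\pi^2$ together with $u_{\S^3}=\abs{\S^3}^{-1}\sigma_{\S^3}$ and $u_{\SO}=(8\pi^2)^{-1}\sigma_{\SO}=(4\abs{\S^3})^{-1}\sigma_{\SO}$, the factor $4$ in \eqref{eq:SO_int2} cancels and we obtain $\int_{\S^3}h(\phi(\bq))\d u_{\S^3}(\bq)=\int_{\SO}h(\bQ)\d u_{\SO}(\bQ)=\SOSW_p^p(\mu,\nu)$.

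The hard part will be, I expect, not any single computation but the bookkeeping around the two-fold cover: one must verify that $c\circ\cS_{\bq}$ is invariant both under $\bp\mapsto-\bp$ and under $\bq\mapsto-\bq$ --- which is exactly what the absolute value in $c(t)=2\arccos\abs t$ arranges --- so that $c_\#\U_{\bq}(\mu\circ\phi)$ is well defined independently of the chosen lift $\tilde\mu$ and of the representative $\pm\bq$ of $\phi(\bq)$, and one must carry the normalizing constants from \eqref{eq:SO_int2} through cleanly so that no spurious factor survives. Everything else reduces to the homomorphism property of $\phi$, the measure formula \eqref{eq:SO_int2}, and the definitions of $\cT_{\bQ}$, $\U_{\bq}$, and $\SOSW_p$ already recorded above.
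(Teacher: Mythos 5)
Your proposal is correct and takes essentially the same route as the paper: establish the pointwise identity $d_{\phi(\bq)}\circ\phi = c\circ\cS_{\bq}$ on $\S^3$, use it to rewrite $\cT_{\phi(\bq)}\mu = (d_{\phi(\bq)})_\#\mu = (c\circ\cS_\bq)_\#(\mu\circ\phi)$, and then transfer the integral over $\SO$ to $\S^3$ via \eqref{eq:SO_int2} so that the factor $4$ cancels against $u_{\SO}=(8\pi^2)^{-1}\sigma_{\SO}$ and $u_{\S^3}=(2\pi^2)^{-1}\sigma_{\S^3}$. The only cosmetic difference is that you derive the angle identity $\angle(\phi(\br))=2\arccos\lvert r_0\rvert$ by computing $\trace(\phi(\br))=4r_0^2-1$ and applying $\angle=\arccos((\trace-1)/2)$, whereas the paper reads $\angle(\phi(\br))=2\arccos\lvert r_0\rvert$ off directly from the axis--angle parameterization \eqref{eq:phi}; both yield the same conclusion.
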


\begin{proof}
Let $\bxi\in\S^3$, $\bQ\in\SO$ and $q\in\phi^{-1}(\SO)$.
By the multiplication-invariance of $\phi$, we have
\begin{equation}
    d_\bQ\circ \phi(\bxi)
    =
    \angle(\bQ^\top \phi(\bxi))
    =
    \angle(\phi(\bar\bq \diamond \bxi)).
\end{equation}
We note that since the pushforward $\phi_\#$ is bijective from $\Pe(\S^3)$ to $\P(\SO)$,
and its inverse is given by $\mu\circ\phi \in\Pe(\S^3)$. 
Since $\angle(\phi(\br)) = 2\arccos\abs{r_0}$ for any $\br\in\S^3$,
we obtain by \eqref{eq:H-mult}
\begin{equation} \label{eq:dq}
    d_\bQ\circ \phi(\bxi)
    =
    2\arccos\abs{q_0 \xi_0+\bq'\cdot\bxi'}
    =
    2\arccos\abs{\bq\cdot\bxi}
    =
    c\circ \cS_\bq(\bxi).
\end{equation}
Since $\phi_\# (\mu\circ\phi) = \mu$, we obtain 
\begin{align}
    \SOSW_p^p(\mu,\nu)
    &\overset{\eqref{eq:SW_SO}}=
    \frac{1}{8\pi^2}
    \int_{\SO} \WS_p^p\left( (d_\bQ\circ\phi)_\# (\mu\circ\phi), (d_\bQ\circ\phi)_\# (\nu\circ\phi) \right) \d\sigma_{\SO}(\bQ)
    \\
    &\overset{\eqref{eq:SO_int2}}=
    \frac{4}{8\pi^2} \int_{\S^3} \WS_p^p\left( (c\circ \cS_\bq)_\# (\mu\circ\phi), (c\circ \cS_\bq)_\# (\nu\circ\phi) \right) \d\sigma_{\S^3}(\bq). \qedhere
\end{align}
\end{proof}

The right-hand side of \eqref{eq:SOSW-S3} mimics the parallelly sliced Wasserstein distance \eqref{eq:SW} on $\S^3$ between $\mu\circ\phi$ and $\nu\circ\phi$,
except for the additional transformation $c$.
We want to point out that this equivalence in \autoref{thm:SOSW-S3} holds only for even measures on $\S^3$, since we always have $\mu\circ\phi\in\Pe(\S^3)$.

\section{Sliced Wasserstein distances for antipodal point measures} \label{sec:particular}
We study sliced Wasserstein barycenters of two antipodal Dirac measures $\mu_1 = \delta_{\be^3}$ and $\mu_2 = \delta_{-\be^3}$ on the sphere $\S^2$, as presented in \autoref{rem:antipo_diracs}. This case exhibits some differences between the Wasserstein distance, the parallelly sliced Wasserstein distance and the semicircular sliced Wasserstein distance. We define the equator $C \coloneqq \{\bxi\in \S^2\mid \xi_3=0\}$.
    
\begin{proposition}
    The 2-Wasserstein barycenters of the two antipodal Dirac measures $\mu_1$ and $\mu_2$ are the probability measures $\nu \in \P(\S^2)$ whose support is included in the equator $C$. 
\end{proposition}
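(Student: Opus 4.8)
The plan is to evaluate the barycenter functional from \eqref{eq:W-bary} explicitly. Since $\mu_1,\mu_2$ are Dirac measures, the only element of $\Pi(\nu,\delta_{\bx})$ for any $\bx\in\S^2$ is the product $\nu\otimes\delta_{\bx}$, so $\WS_2^2(\nu,\delta_{\bx})=\int_{\S^2} d^2(\bxi,\bx)\,\d\nu(\bxi)$. First I would rewrite both terms using the geodesic distance \eqref{eq:Sd-dist}: with $\theta=\zen(\bxi)\in[0,\pi]$ one has $d(\bxi,\be^3)=\arccos(\xi_3)=\theta$ and $d(\bxi,-\be^3)=\arccos(-\xi_3)=\pi-\theta$. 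Hence the objective in \eqref{eq:W-bary} with $\lambda_1=\lambda_2=\tfrac12$ equals
\[
    \tfrac12\WS_2^2(\nu,\mu_1)+\tfrac12\WS_2^2(\nu,\mu_2)
    = \tfrac12\int_{\S^2} g(\zen(\bxi))\,\d\nu(\bxi),
    \qquad g(\theta)\coloneqq\theta^2+(\pi-\theta)^2.
\]

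The next step is to minimize $g$ pointwise on $[0,\pi]$. Since $g$ is strictly convex with $g'(\theta)=4\theta-2\pi$, it has the unique minimizer $\theta=\pi/2$ with $g(\pi/2)=\pi^2/2$, and $g(\theta)>\pi^2/2$ whenever $\theta\neq\pi/2$. Therefore the functional is bounded below by $\pi^2/4$ for every $\nu\in\P(\S^2)$, and the bound is attained, e.g.\ by the uniform measure on $C$. It then remains to identify the equality case: since $g\circ\zen-\tfrac{\pi^2}2\ge0$ is continuous and vanishes exactly on the closed set $C=\{\bxi\in\S^2\mid\zen(\bxi)=\pi/2\}$, the integral $\int_{\S^2}(g\circ\zen-\tfrac{\pi^2}2)\,\d\nu$ vanishes if and only if $\nu(\S^2\setminus C)=0$, i.e.\ $\supp\nu\subset C$.

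I do not anticipate a genuine obstacle here; the whole point is that the transport cost against each Dirac reduces to an explicit integral, and summing collapses the problem to minimizing the one-dimensional function $g$ of the polar angle. The only mildly delicate step is the characterization of the equality case, which follows from the standard fact that a nonnegative continuous integrand integrates to zero against a probability measure precisely when that measure is concentrated on the integrand's zero set.
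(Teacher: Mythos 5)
Your proof is correct and follows essentially the same route as the paper: reduce $\WS_2^2(\nu,\delta_{\pm\be^3})$ to an integral of the squared geodesic distance, push forward by the polar angle, and minimize the one-dimensional function $g(\theta)=\theta^2+(\pi-\theta)^2$ pointwise, with the equality case handled by the vanishing set of the nonnegative continuous integrand. The only cosmetic difference is that the paper phrases the equality case as $(\zen)_\#\nu=\delta_{\pi/2}$ rather than directly as $\supp\nu\subset C$; these are equivalent since $C$ is closed.
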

    
\begin{proof}
    The Wasserstein barycenters of $\mu_1$ and $\mu_2$ on the sphere are the measures in $\P(\S^2)$ minimizing $\cE_{\WS}(\nu) \coloneqq \frac{1}{2}\WS_2^2(\nu, \mu_1) + \frac{1}{2}\WS_2^2(\nu, \mu_2)$. 
    Let $\nu \in \P(\S^2)$. 
    With the map $\zen \colon \S^2\to[0,\pi]$, $\bxi\mapsto \arccos(\xi_3)$,
    we have
    \begin{equation*}
        \WS_2^2(\nu, \delta_{\be^3}) 
        = \int_{\S^2} d_{\S^2}(\bpsi, \be^3)^2 \d \nu(\bpsi)
        = \int_0^\pi t^2 \d(\zen)_\#\nu(t)
    \end{equation*}
    and similarly
    \begin{equation*}
        \WS_2^2(\nu, \delta_{-\be^3}) = \int_0^\pi (\pi - t)^2 \ \d(\zen)_\#\nu(t).
    \end{equation*}
    Hence
    \begin{equation*}
        \cE_{\WS}(\nu)
        = \frac{1}{2}\int_0^\pi \big[t^2 + (\pi - t)^2\big]\d(\zen)_\#\nu(t).
    \end{equation*}
    The integrand has a unique minimizer $t=\frac{\pi}{2}$ and its minimum is $\frac{\pi^2}{4}$. Therefore, $\cE_{\WS}(\nu)\ge\frac{\pi^2}{4}$ for all $\nu\in\P(\S^2)$ with equality if and only if $(\zen)_\#\nu(dt) = \delta_{\frac{\pi}{2}}$, i.e., if $\nu(C) = 1$.
\end{proof}

However, this observation does not apply to the parallelly sliced Wasserstein barycenters.

\begin{proposition}
    \label{thm:antipod_par}
    All probability measures on the sphere are parallelly sliced Wasserstein barycenters of $\mu_1$ and $\mu_2$. 
\end{proposition}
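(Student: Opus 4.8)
The plan is to show that the functional
$$\cE^{\PSW}(\nu) \coloneqq \tfrac12\PSW_2^2(\nu,\mu_1) + \tfrac12\PSW_2^2(\nu,\mu_2)$$
is \emph{constant} on $\P(\S^2)$; then its set of minimizers, i.e.\ the parallelly sliced Wasserstein barycenter $\bary^{\PSW}_{\S^2}$, is all of $\P(\S^2)$.

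First I would compute the slices of the two inputs. For every $\bpsi\in\S^2$, writing $\psi_3 \coloneqq \inn{\bpsi,\be^3}$ for the third coordinate of $\bpsi$, the definition \eqref{eq:Vs_measure} gives $\U_\bpsi\mu_1 = (\cS_\bpsi)_\#\delta_{\be^3} = \delta_{\psi_3}$ and $\U_\bpsi\mu_2 = \delta_{-\psi_3}$. Since the $2$-Wasserstein distance on $\II$ to a Dirac mass is the second moment about that point, $\WS_2^2(\rho,\delta_a) = \int_\II(t-a)^2\d\rho(t)$, I would set $\rho_\bpsi \coloneqq \U_\bpsi\nu$ and note
\begin{align*}
\tfrac12\WS_2^2(\rho_\bpsi,\delta_{\psi_3}) + \tfrac12\WS_2^2(\rho_\bpsi,\delta_{-\psi_3})
&= \tfrac12\int_\II\bigl[(t-\psi_3)^2 + (t+\psi_3)^2\bigr]\d\rho_\bpsi(t)\\
&= \int_\II t^2\d\rho_\bpsi(t) + \psi_3^2.
\end{align*}
The decisive point — and really the only substantive one — is that the first-order terms in $t$ cancel because $\mu_1$ and $\mu_2$ are antipodal; what remains depends on $\nu$ only through $\int_\II t^2\d\rho_\bpsi(t)$, plus a term not involving $\nu$.

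It then remains to integrate over $\bpsi\in\S^2$ and simplify, which is routine. Using Tonelli's theorem, the pushforward identity $\int_\II t^2\d(\U_\bpsi\nu)(t) = \int_{\S^2}\inn{\bpsi,\bxi}^2\d\nu(\bxi)$, and the rotational invariance of $u_{\S^2}$, which yields $\int_{\S^2}\inn{\bpsi,\bxi}^2\d u_{\S^2}(\bpsi) = \tfrac13$ for every $\bxi\in\S^2$, I obtain
\begin{align*}
\cE^{\PSW}(\nu)
&= \int_{\S^2}\Bigl(\int_{\S^2}\inn{\bpsi,\bxi}^2\d u_{\S^2}(\bpsi)\Bigr)\d\nu(\bxi) + \int_{\S^2}\psi_3^2\d u_{\S^2}(\bpsi)\\
&= \tfrac13 + \tfrac13 = \tfrac23,
\end{align*}
which is independent of $\nu$. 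Hence every $\nu\in\P(\S^2)$ minimizes $\cE^{\PSW}$, proving the proposition. There is no genuine obstacle in this argument; the point to highlight is the antipodal cancellation of the linear terms, which is exactly why the sliced barycenter here is completely undetermined, in sharp contrast to the unsliced Wasserstein barycenter, which was shown above to be supported on the equator.
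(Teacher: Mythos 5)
Your proposal is correct and follows essentially the same route as the paper's proof: compute the slices $\U_\bpsi\delta_{\pm\be^3}=\delta_{\pm\psi_3}$, exploit the antipodal cancellation of the linear terms so the integrand collapses to $\int_\II t^2\d(\U_\bpsi\nu)(t)+\psi_3^2$, and then integrate over $\bpsi$ using $\int_{\S^2}\inn{\bpsi,\bxi}^2\d u_{\S^2}(\bpsi)=\tfrac13$ to obtain the constant value $\tfrac23$. The only cosmetic difference is that you invoke the one-sided Wasserstein-to-Dirac identity $\WS_2^2(\rho,\delta_a)=\int(t-a)^2\d\rho$ up front, whereas the paper expands the two squared terms and adds them directly.
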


\begin{proof}
    Let $\nu \in \P(\S^2)$.
    We have
    \begin{align*}
        \cE_V(\nu) 
        \coloneqq{}& \frac{1}{2}\PSW_2^2(\nu, \delta_{\be^3}) + \frac{1}{2}\PSW_2^2(\nu, \delta_{-\be^3})\\
        ={}& \frac{1}{2}\int_{\S^2}\Big[\WS_2^2\big(\U_\bpsi\nu, \U_\bpsi\delta_{\be^3}\big) + \WS_2^2\big(\U_\bpsi\nu, \U_\bpsi\delta_{-\be^3}\big)\Big]\d u_{\S^2}(\bpsi).
        \intertext{Using that $\U_\bpsi \delta_{\pm\be^3} = \delta_{\inn{\bpsi, \pm\be^3}} = \delta_{\pm\psi_3}$ and $\U_\bpsi \nu = (\cS_\bpsi)_\#\nu$ for all $\bpsi\in\S^2$, we have}
        \cE_V(\nu)
        ={}& \frac{1}{2}\int_{\S^2} \left[\int_{-1}^1\abs{t - \inn{\bpsi, \be^3}}^2 \d(\cS_{\bpsi})_\#\nu(t) + \int_{-1}^1\abs{t + \inn{\bpsi, \be^3}}^2 \d(\cS_{\bpsi})_\#\nu(t)\right]\d u_{\S^2}(\bpsi)\\
        ={}& \frac{1}{2}\int_{\S^2} \int_{-1}^1\Big[2t^2 + 2 \inn{\bpsi, \be^3}^2\Big] \d(\cS_{\bpsi})_\#\nu(t)\ u_{\S^2}(d\bpsi).
    \end{align*}
    Using \eqref{eq:Sd_measure} and the rotation invariance of the spherical integral, we have 
    \begin{equation} \label{eq:int_inn2}
        \int_{\S^2}\inn{\bpsi, \bxi}^2\d u_{\S^2}(\bpsi)
        = \frac{1}{2}\int_{-1}^1 t^2\d t = \frac{1}{3}
        \qquad \forall \bpsi\in\S^2.
    \end{equation}
    Hence, we obtain
    \begin{equation*}
        \cE_V(\nu)
        = \int_{\S^2} \int_{\S^2} \left[ \inn{\bpsi, \bxi}^2 + \inn{\bpsi, \be^3}^2 \right] \d u_{\S^2}(\bpsi) \d\nu(\bxi)
        = \frac{2}{3}.\qedhere
    \end{equation*}
\end{proof}

Let us now consider the case of the semicircular sliced Wasserstein distance. Such slicing operator is much harder to manipulate. Therefore, we did not manage to determine the semicircular sliced Wasserstein barycenters of $\mu_1$ and $\mu_2$, but we can show that the observation made in \autoref{thm:antipod_par} does not hold in the case of the $\SSW$ distance.

\begin{proposition}
    The uniform probability measure on the sphere is not a semicircular sliced 2-Wasserstein barycenter. In particular, considering $\chi$, the uniform probability measure on the equator $C$, and $u_{\S^2}$ the uniform probability measure on $\S^2$, we have
    $$\tfrac{1}{2}\SSW_2^2(\mu_1, \chi) + \tfrac12 \SSW_2^2(\mu_2, \chi) < \tfrac{1}{2}\SSW_2^2(\mu_1, u_{\S^2}) + \tfrac12 \SSW_2^2(\mu_2, u_{\S^2}).$$
\end{proposition}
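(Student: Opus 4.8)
The plan is to reduce both sides to integrals of one explicit function on $\S^2$ and then compare that function's value on the equator with its average over $\S^2$. First, since for $\theta\in(0,\pi)$ the vector $\eul(\varphi,\theta,0)^\top\be^3=\rR_{\be^2}(-\theta)\be^3=(-\sin\theta,0,\cos\theta)^\top$ lies in the $\be^1\be^3$-plane with negative first coordinate, one gets $(\cA_{\bpsi})_\#\mu_1=\delta_\pi$, and likewise $(\cA_{\bpsi})_\#\mu_2=\delta_0$, for $u_{\S^2}$-a.e.\ $\bpsi$. Writing $r_{\bpsi}(\bxi)\coloneqq d_{\T}(0,\cA_{\bpsi}(\bxi))\in[0,\pi]$ and using that $0$ and $\pi$ are antipodal on the circle $\T$ of circumference $2\pi$, so $d_{\T}(\pi,s)=\pi-d_{\T}(0,s)$, the identity $\WS_2^2(\delta_a,\rho')=\int_{\T}d_{\T}(a,\cdot)^2\d\rho'$ yields, for every $\rho\in\P(\S^2)$,
\[
\tfrac12\SSW_2^2(\mu_1,\rho)+\tfrac12\SSW_2^2(\mu_2,\rho)=\int_{\S^2}h(\bxi)\d\rho(\bxi),\qquad h(\bxi)\coloneqq\int_{\S^2}\bigl(r_{\bpsi}(\bxi)^2-\pi\,r_{\bpsi}(\bxi)+\tfrac{\pi^2}{2}\bigr)\d u_{\S^2}(\bpsi).
\]
From $\cA_{\bpsi}(\rR_{\be^3}(\alpha)\bxi)=\cA_{\sph(\varphi-\alpha,\theta)}(\bxi)$ for $\bpsi=\sph(\varphi,\theta)$ together with the azimuthal invariance of $u_{\S^2}$, the function $h$ depends only on $\inn{\bxi,\be^3}$; in particular $h$ is constant on the equator $C$, so $\tfrac12\SSW_2^2(\mu_1,\chi)+\tfrac12\SSW_2^2(\mu_2,\chi)=h(\be^1)$.

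For the right-hand side I would use the axis equivariance $\cA_{\bpsi}(\rR_{\bpsi}(\alpha)\bxi)=\cA_{\bpsi}(\bxi)+\alpha$, where $\rR_{\bpsi}(\alpha)=\eul(\varphi,\theta,0)\rR_{\be^3}(\alpha)\eul(\varphi,\theta,0)^\top$ is the rotation by $\alpha$ about the axis $\bpsi=\eul(\varphi,\theta,0)\be^3$. Since $u_{\S^2}$ is invariant under $\rR_{\bpsi}(\alpha)$, the pushforward $(\cA_{\bpsi})_\#u_{\S^2}$ is translation invariant on $\T$, hence the uniform measure. Fubini then gives
\[
\tfrac12\SSW_2^2(\mu_1,u_{\S^2})+\tfrac12\SSW_2^2(\mu_2,u_{\S^2})=\int_{\T}\bigl(d_{\T}(0,t)^2-\pi\,d_{\T}(0,t)+\tfrac{\pi^2}{2}\bigr)\tfrac{\d t}{2\pi}=\tfrac{\pi^2}{3},
\]
using $\int_{\T}d_{\T}(0,t)\tfrac{\d t}{2\pi}=\tfrac\pi2$ and $\int_{\T}d_{\T}(0,t)^2\tfrac{\d t}{2\pi}=\tfrac{\pi^2}{3}$.

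The core of the equator estimate is the computation, via $\eul(\varphi,\theta,0)^\top=\rR_{\be^2}(-\theta)\rR_{\be^3}(-\varphi)$ applied to $\be^1$ followed by taking the azimuth,
\[
\cos r_{\bpsi}(\be^1)=\frac{\cos\theta\,\cos\varphi}{\sqrt{\cos^2\theta\,\cos^2\varphi+\sin^2\varphi}},\qquad \bpsi=\sph(\varphi,\theta).
\]
The substitution $\varphi\mapsto\varphi+\pi$ flips the sign of this expression, hence sends $r_{\bpsi}(\be^1)$ to $\pi-r_{\bpsi}(\be^1)$ while preserving $u_{\S^2}$; therefore $\int_{\S^2}r_{\bpsi}(\be^1)\d u_{\S^2}(\bpsi)=\tfrac\pi2$ and consequently $h(\be^1)=\int_{\S^2}r_{\bpsi}(\be^1)^2\d u_{\S^2}(\bpsi)=\int_{\S^2}\bigl(r_{\bpsi}(\be^1)-\tfrac\pi2\bigr)^2\d u_{\S^2}(\bpsi)+\tfrac{\pi^2}{4}$. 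Since $c\mapsto a|c|/\sqrt{a^2c^2+b^2}$ is nondecreasing on $[0,1]$ (strictly, if $a,b>0$), we get $|\cos r_{\bpsi}(\be^1)|\le|\cos\varphi|$, i.e.\ $|r_{\bpsi}(\be^1)-\tfrac\pi2|\le\arcsin|\cos\varphi|$, with strict inequality for $u_{\S^2}$-a.e.\ $\bpsi$ (namely whenever $\theta\notin\{0,\pi\}$ and $\varphi\notin\tfrac\pi2\Z$). As $\arcsin|\cos\varphi|$ is uniformly distributed on $[0,\tfrac\pi2]$ when $\varphi$ is uniform, this gives $\int_{\S^2}\bigl(r_{\bpsi}(\be^1)-\tfrac\pi2\bigr)^2\d u_{\S^2}(\bpsi)<\tfrac{2}{\pi}\int_0^{\pi/2}s^2\d s=\tfrac{\pi^2}{12}$, hence $h(\be^1)<\tfrac{\pi^2}{12}+\tfrac{\pi^2}{4}=\tfrac{\pi^2}{3}$. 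Combining the three steps yields $\tfrac12\SSW_2^2(\mu_1,\chi)+\tfrac12\SSW_2^2(\mu_2,\chi)=h(\be^1)<\tfrac{\pi^2}{3}=\tfrac12\SSW_2^2(\mu_1,u_{\S^2})+\tfrac12\SSW_2^2(\mu_2,u_{\S^2})$.

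The main obstacle will be the bookkeeping around the semicircular operator $\cA_{\bpsi}$: establishing the axis equivariance and the resulting uniformity of $(\cA_{\bpsi})_\#u_{\S^2}$, carrying out the Euler-angle computation of $\cos r_{\bpsi}(\be^1)$ and handling the measure-zero exceptional set where azimuths are undefined, and converting the monotonicity of $c\mapsto a|c|/\sqrt{a^2c^2+b^2}$ into the strict variance bound through the distribution of $\arcsin|\cos\varphi|$. The remaining computations (the one-dimensional integrals over $\T$, the antipodal identity) are routine.
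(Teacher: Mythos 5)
Your proof is correct and follows essentially the same route as the paper's: both reduce $\tfrac12\SSW_2^2(\mu_1,\rho)+\tfrac12\SSW_2^2(\mu_2,\rho)$ to an integral of a latitude-dependent function against $\rho$, evaluate it to $\pi^2/3$ on $u_{\S^2}$ using the uniformity of $(\cA_\bpsi)_\# u_{\S^2}$, and obtain a strict bound on the equator from what is the same pointwise inequality (your $|r_\bpsi(\be^1)-\tfrac\pi2|<\arcsin|\cos\varphi|$ is the paper's $\arctan(\tan\varphi/\cos\theta)>\varphi$ after re-parameterizing). Your antipodal reduction $d_\T(\pi,s)=\pi-d_\T(0,s)$ is a slightly cleaner packaging of the paper's $F(\beta)=\tfrac12F_1(\beta)+\tfrac12F_1(\pi-\beta)$ symmetrization, and you helpfully supply the axis-equivariance argument for $(\cA_\bpsi)_\# u_{\S^2}=u_\T$, which the paper asserts without proof.
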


\begin{proof}\newcommand{\SP}{\S^2} 
    Recall the circle $\T=\R / 2\pi\Z$.
    For any $x \in \R$, we note $[x] = x + 2\pi\Z$ the equivalence class of $x$ and for any $\gamma\in \T$ we note $\tilde\gamma$ its representative in $[0, 2\pi[$.
    Let $\mu\in\P(\S^2)$.
    The semicircular sliced Wasserstein barycenters are the minimizers of the functional
    \begin{equation*}
        \cE_{S}(\mu) \coloneqq \frac 12 \SSW_2^2(\mu, \delta_{\be^3}) + \frac 12 \SSW_2^2(\mu, \delta_{-\be^3}),
    \end{equation*}
    where
    \begin{equation*}
        \SSW_2^2(\mu, \delta_{\be^3}) = \int_{\S^2} \W_2^2(\cA_{\bpsi\#}\mu, \cA_{\bpsi\#}\delta_{\be^3})\d u_{\S^2}(\bpsi)
    \end{equation*}
    and the slicing operator $\cA_\bpsi$ is given in \autoref{rem:SSW}.
    Let $\bpsi = \Phi(\varphi, \theta) \in \SP$.
    Since we integrate over $\S^2$, we can assume $\bpsi \notin \{\pm\be^3\}$.
    Then we have $(\cA_\bpsi)_\#\delta_{\be^3} = \delta_{[\pi]}$ and $(\cA_\bpsi)_\# \delta_{-\be^3} = \delta_{[0]}$. Therefore, 
    \begin{equation} \label{eq:WsA}
        \W_2^2(\cA_{\bpsi\#}\mu, \cA_{\bpsi\#}\delta_{\be^3})
        = \int_\T |\tilde \gamma - \pi|^2 \d \cA_{\bpsi\#}\mu (t)
        = \int_{\S^2} \left|\tilde\cA_\bpsi(\bxi) - \pi \right|^2 \d\mu(\bxi).
    \end{equation}
    Using spherical coordinates $\bxi = \sph(\alpha,\beta)$, we see that
    $$
    \cA_{\bpsi}(\bxi)
    =
    \azi\big(\eul(0,-\theta,-\varphi) \sph(\alpha,\beta)\big)
    =
    \azi\big(\eul(0,-\theta,-\varphi+\alpha) \sph(0,\beta)\big)
    =
    \cA_{\sph(\varphi-\alpha,\theta)} (\sph(0,\beta)).
    $$
    Hence, with the substitution $\varphi\mapsto\varphi-\alpha$, we have
    \begin{align*}
        \SSW_2^2(\mu,\delta_{\be^3})
        &=
        \frac{1}{4\pi} \int_{\S^2} \int_{0}^{\pi} \int_{0}^{2\pi}
        \abs{\tilde\cA_{\sph(\varphi-\alpha,\theta)} (\sph(0,\beta)) - \pi}^2 \sin(\theta) \d\varphi \d\theta \d\mu(\sph(\alpha,\beta))\\
        &=
        \frac{1}{4\pi} \int_{\S^2} \int_{0}^{\pi} \int_{0}^{2\pi}
        \abs{\tilde\cA_{\sph(\varphi,\theta)} (\sph(0,\beta)) - \pi}^2 \sin(\theta) \d\varphi \d\theta \d\mu(\sph(\alpha,\beta))\\
        &=
        \frac{1}{4\pi} \int_{\S^2} \int_{0}^{\pi} \int_{0}^{2\pi}
        \abs{\tilde\cA_{\sph(\varphi,\theta)} (\sph(0,\beta)) - \pi}^2 \sin(\theta) \d\varphi \d\theta \d\zen_\#\mu(\beta).
    \end{align*}
    Introducing the functions
    \begin{equation*}
        F_1\colon [0, \pi]\to \R, \quad \beta \mapsto \frac{1}{4\pi} \int_0^{2\pi}\int_0^\pi \left|\tilde\cA_{\Phi(-\varphi, \theta)}(\Phi(0, \beta)) - \pi\right|^2 \sin(\theta) \d\theta \d\varphi,
    \end{equation*}
    and
    \begin{equation*}
        F \colon[0, \pi]\to \R, \quad\beta\mapsto \tfrac 12 F_1(\beta) + \tfrac 12 F_1(\pi-\beta),
    \end{equation*}
    we obtain by using the symmetry that
    \begin{equation*}
    \cE_S(\mu)
    =
    \tfrac12 \SSW_p^p(\mu, \delta_{\be^3})
    +
    \tfrac12 \SSW_p^p(\mu, \delta_{-\be^3}) 
    = 
    \int_0^\pi F(\beta) \d\zen_\#\mu(\beta).
    \end{equation*}
    Let $\beta\in[0,\pi]$.
    We study $f_\beta(\varphi, \theta) \coloneqq \tilde\cA_{\Phi(-\varphi, \theta)}(\Phi(0, \beta))$. 
    For $\theta \in (0, \pi)$ and $\varphi \in (0, 2\pi) \setminus \{\pi\}$, we have
    \begin{align*}
        f_\beta(\varphi, \theta) 
        &= \azi\big(\rR_{\be^2}(-\theta)\rR_{\be^3}(\varphi)\Phi(0, \beta)\big)\\
        &= \azi\left(
            \begin{bsmallmatrix}
                \cos(\theta) & 0 & -\sin(\theta) \\
                0 & 1 & 0 \\
                \sin(\theta) & 0 & \cos(\theta)
            \end{bsmallmatrix}
            \begin{bsmallmatrix}
                \cos(\varphi) & -\sin(\varphi) & 0 \\
                \sin(\varphi) & \cos(\varphi) & 0 \\
                0 & 0 & 1
            \end{bsmallmatrix}
            \begin{bsmallmatrix}
                \sin(\beta) \\ 0 \\ \cos(\beta)
            \end{bsmallmatrix}
            \right)\\
        &= \azi\left(
            \begin{bsmallmatrix}
                \cos(\theta)\cos(\varphi) \sin(\beta) - \sin(\theta)\cos(\beta) \\
                \sin(\varphi)\sin(\beta) \\
                \sin(\theta)\cos(\varphi)\sin(\beta) + \cos(\theta)\cos(\beta)
            \end{bsmallmatrix}
            \right).
    \end{align*}
    We identify some symmetries. For $\varphi \in (0, 2\pi) \setminus \{\pi\}$, and $\theta \in (0, \pi)$, we have 
    $$f_\beta(2\pi - \varphi, \theta) = 2\pi - f_\beta(\varphi, \theta) \quad\text{ or }\quad f_\beta(2\pi - \varphi, \theta) = f_\beta(\varphi, \theta).$$ 
    In both cases, $(f_\beta(2\pi - \varphi, \theta) - \pi)^2 = (f_\beta(\varphi, \theta) - \pi)^2$. Moreover, for $\varphi, \theta \in (0, \pi)$, 
    \begin{equation}\label{eq:fb_sym2}
        f_\beta(\varphi, \pi - \theta) = f_\beta(\pi -  \varphi, \theta)\quad\text{ and }\quad f_{\pi - \beta}(\varphi, \theta) = \pi - f_\beta(\pi - \varphi, \theta).
    \end{equation}
    Hence, we have
    \begin{align*}
        F_1(\beta)
        &= \frac{1}{\pi} \int_0^\pi \int_0^{\frac{\pi}{2}} (f_\beta(\varphi, \theta) - \pi)^2 \sin(\theta) \d\theta \d\varphi
    \qquad \text{and}\\
        F_1(\pi-\beta) &= \frac 1\pi \int_0^\pi \int_0^{\frac{\pi}{2}} f_\beta(\varphi, \theta)^2 \sin(\theta) \d\theta \d\varphi.
    \end{align*}
    Eventually, $F$ is given by 
    \begin{equation*}
        F(\beta) 
        = \frac 1\pi \int_0^\pi \int_0^{\frac{\pi}{2}} \left(f_\beta(\varphi, \theta) - \frac{\pi}2\right)^2 \sin(\theta) \d\theta \d\varphi \ + \frac{\pi^2}4,
    \end{equation*}
    where, for $\beta, \varphi, \theta \in (0, \pi)$, we have 
    \begin{equation*}
        f_\beta(\varphi, \theta) = \frac{\pi}2 - \arctan\left(\cos(\theta)\cot(\varphi) - \frac{\sin(\theta)}{\sin(\varphi)}\cot(\beta)\right).
    \end{equation*}

    Let us now focus on the two particular cases of the theorem. Let $\chi$ be a measure supported by the equator. By the symmetry \eqref{eq:fb_sym2}, we have
    \begin{align*}
        \cE_S(\chi)
        = F\left(\frac \pi 2\right)
        &= \frac 1\pi \int_0^\pi \int_0^{\frac{\pi}{2}} \left(f_{\frac{\pi}{2}}(\varphi, \theta) - \frac{\pi}2\right)^2 \sin(\theta) \d\theta \d\varphi \ + \frac{\pi^2}4\\
        &= \frac 2\pi \int_0^{\frac{\pi}2} \int_0^{\frac{\pi}{2}} \left(\arctan\left(\frac{\tan(\varphi)}{\cos(\theta)}\right) - \frac{\pi}2\right)^2 \sin(\theta) \d\theta \d\varphi \ + \frac{\pi^2}4.
    \end{align*}
    Since $\arctan\left(\frac{\tan(\varphi)}{\cos(\theta)}\right) > \varphi$ for any $\varphi, \theta \in (0, \frac\pi2)$, and $t \mapsto \left(t-\frac\pi2\right)^2$ is strictly decreasing on $[0, \frac\pi2]$,
    we obtain
    \begin{equation*}
        \cE_S(\chi)
        < \frac 2\pi \int_0^{\frac{\pi}2} \int_0^{\frac{\pi}{2}} \left(\varphi - \frac{\pi}2\right)^2 \sin(\theta) \d\theta \d\varphi \ + \frac{\pi^2}4
        =  \frac{\pi^2}{12} + \frac{\pi^2}4
        = \frac{\pi^2}{3}.
    \end{equation*}
    For the uniform measure $u_{\S^2}$ on the sphere, we have $\cA_{\bpsi\#}u_{\S^2} = u_\T$ for any $\bpsi \in \SP$. From the first equality of \eqref{eq:WsA}, we obtain
    \begin{align*}
        \SSW_2^2(u_{\S^2}, \delta_{\be^3})
        &= \frac{1}{2\pi} \int_{\S^2} \int_0^{2\pi} |t - \pi|^2 \d t \d u_{\S^2}(\bpsi)
        = \frac{\pi^2}{3}.
    \end{align*}
    By symmetry, we have $\cE_S(u) = \frac{\pi^2}3$, and therefore $\cE_S(u) > \cE_S(\chi)$. 
\end{proof}

\small
\bibliographystyle{abbrvurl}
\bibliography{literature}

\end{document}